\documentclass[a4paper,english,oneside,11pt]{amsart}
\usepackage[BCOR0mm,DIV9]{typearea}
\usepackage[latin1]{inputenc}
\usepackage[T1]{fontenc}
\usepackage{lmodern}
\usepackage{dsfont}
\usepackage[all]{xy}
\usepackage{amsmath,amscd}
\usepackage{amsthm}
\usepackage{mathtools}
\usepackage{amssymb}
\usepackage{amsfonts}
\usepackage{latexsym}
\usepackage{pdfsync}
\usepackage{hyperref}
\xyoption{graph}

\vfuzz=1.2pt \hfuzz=5.0pt \setlength{\topmargin}{0cm}
\setlength{\textwidth}{15.7cm} \setlength{\textheight}{53.2pc}
\setlength{\evensidemargin}{0.3cm} \setlength{\oddsidemargin}{0.3cm}

\multlinegap1cm

\title{Partial actions and KMS states on relative graph $\cs$-algebras}
\author[T. M. Carlsen]{Toke M. Carlsen}
\address{Department of Mathematical Sciences, Norwegian University of Science and Technology, N-7491 Trondheim, Norway}
\email{Toke.Meier.Carlsen@math.ntnu.no}
\author[N. S. Larsen]{Nadia S. Larsen}
\address{Department of Mathematics, University of Oslo, PO Box 1053, N-0316, Oslo, Norway}
\email{nadiasl@math.uio.no}
\date{\today}

\newtheorem{theorem}{Theorem}[section]
\newtheorem{lemma}[theorem]{Lemma}
\newtheorem{proposition}[theorem]{Proposition}
\newtheorem{corollary}[theorem]{Corollary}

\theoremstyle{definition}
\newtheorem{definition}[theorem]{Definition}
\newtheorem{example}[theorem]{Example}

\newtheorem{remark}[theorem]{Remark}

\newcommand{\cs}{C^*}
\newcommand{\cha}[1]{1_{#1}}

\newcommand{\inv}{^{-1}}

\newcommand\3[1]{{\mathds #1}}

\newcommand{\C}{\mathbb{C}}

\newcommand{\R}{\mathbb{R}}
\newcommand{\Z}{\mathbb{Z}}
\newcommand{\N}{\mathbb{N}}
\newcommand{\No}{{\N_{0}}}
\newcommand{\T}{\mathbb{T}}

\newcommand{\cyl}[1]{Z(#1)}
\newcommand{\cylf}[1]{Z_F(#1)}

\newcommand{\free}{\mathbb{F}}
\newcommand{\domain}[1]{U(#1\inv)}
\newcommand{\rang}[1]{U(#1)}
\newcommand{\pa}{\phi}
\newcommand{\tdomain}[1]{U(#1\inv)}
\newcommand{\trang}[1]{U(#1)}
\newcommand{\tpa}{\phi}
\newcommand{\cros}{C_0(\partial_R E)\rtimes_\Phi\free}
\newcommand{\spac}{\overline{\spa}}

\newcommand{\abs}[1]{\lvert #1\rvert}
\newcommand{\norm}[1]{\lVert #1\rVert}
\newcommand{\reg}{{\operatorname{reg}}}
\newcommand{\Cf}{D^{\beta}_{\operatorname{fin}}}
\newcommand{\Cinf}{D^{\beta}_{\operatorname{inf}}}
\newcommand{\Cinfa}{D^{\beta}_{\operatorname{con}}}
\newcommand{\Cinfb}{D^{\beta}_{\operatorname{dis}}}
\newcommand{\Eva}{E_a^*v}

\newcommand{\Evav}{vE_s^*v}
\newcommand{\Zva}{Z_{v}^a}
\newcommand{\Zvav}{Z_{v}^s}
\newcommand{\Erec}{E^\infty_{\operatorname{rec}}}
\newcommand{\Ereg}{E^0_{\beta\text{-reg}}}
\newcommand{\Ecrit}{E^0_{\beta\text{-crit}}}

\newcommand{\Eequ}{E^0_{\beta\text{-equ}}}
\newcommand{\Ewan}{E^\infty_{\operatorname{wan}}}
\newcommand{\Es}[2]{{#1}E_a^*{#2}}
\newcommand{\Winf}{W_{\operatorname{inf}}}
\newcommand{\conv}{\operatorname{Conv}}

\DeclareMathOperator{\aut}{Aut}
\DeclareMathOperator{\id}{Id}
\DeclareMathOperator{\spa}{span}

\numberwithin{equation}{section}

\begin{document}

\thanks{This research was partially supported by the Research Council of Norway through the project "Operator Algebras", and by the NordForsk research network "Operator Algebra and Dynamics" (grant \#11580).}

\begin{abstract}
	The relative graph $C^*$-algebras introduced by Muhly and Tomforde are generalizations of both graph algebras and their Toeplitz extensions. For an arbitrary graph $E$ and a subset  $R$ of the set of regular vertices of $E$ we show that the relative graph $C^*$-algebra $C^*(E, R)$ is isomorphic to a partial crossed product for an action of the free group generated by the edge set on the relative boundary path space. Given a time evolution on $C^*(E, R)$ induced by a function on the edge set, we characterize the KMS$_\beta$ states and ground states using an abstract result of Exel and Laca. Guided by their work on KMS states for Toeplitz-Cuntz-Krieger type algebras associated to infinite matrices, we obtain complete descriptions of the convex sets of  KMS states of finite type and of KMS states of infinite type whose associated measures are supported on recurrent infinite paths. This allows us to give a complete concrete description of the convex set of all KMS states for a big class of graphs which includes all graphs with finitely many vertices.
\end{abstract}
\maketitle

\section{Introduction}
\label{sec:introduction}

Characterizations of KMS$_\beta$ states and ground states on $C^*$-algebras of Toeplitz and Cuntz-Krieger type associated to a directed graph $E$ have been obtained in different contexts by many authors, see for example
 \cite{MR759450}, \cite{MR602475}, \cite{MR2057042},
\cite{MR1953065}, \cite{MR2056837}, \cite{MR500150} and \cite{MR1785460}. A classical reference for the definition of
KMS$_\beta$ states and  ground states as well as  background is \cite[Section 5.3]{MR1441540}. Recently there has been renewed interest in constructions of KMS states for graph algebras, see \cite{KW}, \cite{aHLRS} and \cite{Cas-Mor}. KMS weights on $C^*$-algebras associated to graphs were studied in \cite{MR2907004} and \cite{Tho}.

There are several techniques used in characterizing KMS states and constructing them in specific cases. These can employ the definition of the graph $C^*$-algebra $C^*(E)$ and its Toeplitz extension $\mathcal{T}C^*(E)$ as universal $C^*$-algebras with generators and relations, see e.g. \cite{aHLRS} and \cite{Cas-Mor}, or the realization of $C^*(E)$ and $\mathcal{T}C^*(E)$ as $C^*$-algebras of Pimsner type associated to a Hilbert bimodule, see \cite{MR2056837} and \cite{KW}, or the description of $C^*(E)$ as a groupoid $C^*$-algebra, see \cite{aHLRS}, which appeals to the general result from \cite{Nesh}.

A different type of general method that provides characterizations of KMS states was developed by Exel and Laca  in their study of Toeplitz-Cuntz-Krieger type algebras for infinite matrices, and uses crossed products by partial group actions, cf. \cite{MR1703078} and \cite{MR1953065}.
Our contribution here is an analysis of KMS and ground states on $C^*$-algebras $C^*(E, R)$ associated to an arbitrary directed graph $E$ and subsets $R$ of the regular vertices (i.e. those vertices that are not sinks or infinite emitters) by means of
realizing any such algebra as a partial crossed product for an action of the free group generated by the edge set. The space acted upon is a certain collection of boundary paths, and the resulting setup in the spirit of \cite{MR1953065} seems very well-suited for the  analysis of KMS states of the various algebras. By emphasizing the common picture of $C^*(E)$ and $\mathcal{T}C^*(E)$ as relative graph algebras in the sense of \cite{MT}, we obtain a unified  description of KMS states, see Theorem~\ref{thm:kms} for the precise statement. Not surprisingly, KMS states correspond to probability measures on the boundary path space that satisfy a certain  scaling property.

With motivation coming from the distinction between measures of finite and infinite type that is a crucial ingredient in \cite{MR1953065}, we distinguish between three classes of measures. First we have the finite type measures as in \cite{MR1953065}. Next, we identify as a new ingredient two kinds of infinite type measures: the measures that are supported on \emph{recurrent} paths, and the infinite type measures that are supported on  \emph{wandering} paths. Following  \cite{Tho2}, we call these \emph{conservative} and \emph{dissipative} measures, respectively. For the finite type and conservative measures we give a complete parametrization of the extreme points of the convex set of KMS$_\beta$ states for $\beta\in [0,\infty)$ in terms of what we call regular and critical vertices. A similar parametrization of the dissipative measures seems more difficult to obtain. We also provide a complete concrete description of the extreme points of the convex set of ground states, and furthermore we identify all those ground states that are KMS$_\infty$ states as introduced in \cite{Con-Mar}. We  show moreover that the
measures of infinite type correspond bijectively to normalized
eigenvectors of a (possibly infinite and even uncountable) matrix
associated to the directed graph under consideration.

 We illustrate in examples that several new phenomena occur in the configuration of KMS states for $\mathcal{T}C^*(E)$ and $C^*(E)$ endowed with the gauge action in case that $E$ is an infinite graph. In Example~\ref{ex:a} we present an infinite strongly connected graph  $E$  of finite degree (or valence) for which $C^*(E)$ has no KMS states, thus showing that the analogue of \cite[Theorem 4.3]{aHLRS} does not hold for infinite graphs. In Example~\ref{ex:motivating} we show that on $\mathcal{T}C^*(E)$, all three types of states can occur;  moreover, the finite type and infinite type states co-exist on a  critical interval, and there is a "phase-transition" between the two infinite types at a critical temperature.

The proof of the main general result relies on the
characterization of KMS states on the crossed product of a
semi-saturated orthogonal partial action of a free group on a
$\cs$-algebra obtained by Exel and Laca
\cite[Theorem 4.3]{MR1953065}. Towards using the Exel-Laca result, for a given directed graph $E$ and a subset $R$ of the regular vertices, we start by constructing a locally compact space $\partial_R E$ and a semi-saturated orthogonal partial action $\Phi$ of the free group $\free$ generated by $E^1$ on the
$\cs$-algebra $C_0(\partial_R E)$, see Section~\ref{sec:part-action-on-graph}. We point out that the space
$\partial E$ corresponding to the choice of $R$ as the entire subset  of regular vertices of $E$ is the boundary path space of the graph, which recently played a role in \cite{W}. In Section \ref{sec:cs-algebra-graph}  we prove that the crossed product $C_0(\partial_R E)\rtimes_\Phi \free$ is isomorphic to $C^*(E, R)$, see Theorem~\ref{theorem:partial}. A similar result is obtained with different methods in \cite{MR1452280} in the case of the usual Cuntz algebra $\mathcal{O}_n$ and Cuntz-Krieger algebra $\mathcal{O}_A$ and their Toeplitz extensions.

Our main general characterization of KMS and ground states on $C^*(E, R)$ is contained in Theorem~\ref{thm:kms}. This result has close connections to the existing  literature, and we elaborate on this point in several remarks.  In section~\ref{section:extreme} we introduce the sets
$\Ereg$ and $\Ecrit$ of regular and critical vertices for a given $\beta\in[0,\infty)$, associate measures to them, and develop a machinery that enables us to give the characterizations of the convex sets of measures of finite type and of conservative measures, see Theorems~\ref{thm:finitetype-reg} and \ref{thm:infinitetype-crit}. Section~\ref{section:ground} deals with ground states and KMS$_\infty$ states. In the final section we present several examples, all of which illustrate in different ways that a much richer structure of KMS states can be expected as one passes from finite to infinite graphs.

We thank M. Laca for suggesting, at a very early stage of this project, to look more carefully at the Toeplitz extension of the graph $C^*$-algebra, and K. Thomsen for valuable comments to an earlier draft of this paper.

\section{A partial action on the relative path space of a graph}
\label{sec:part-action-on-graph}

Let $E=(E^0,E^1,s,r)$ be a directed graph: by this we mean that $E^0$ and $E^1$ are
arbitrary (not necessarily countable) sets and $s$ and $r$ are maps from $E^1$ to $E^0$. Elements of $E^0$ are called
\emph{vertices} of $E$ and elements of $E^1$ are \emph{edges} of $E$. If $e\in E^1$, then $s(e)$ denotes the
\emph{source} of $e$ and $r(e)$ the \emph{range} of $e$.
A \emph{path of length $n$} in $E$ is a sequence $e_1e_2\dots e_n$ of
edges in $E$ such that $r(e_i)=s(e_{i+1})$ for
$i\in\{1,2,\dotsc,n-1\}$ (the reader should be warned that in some papers and books the roles of $r$ and
$s$ are interchanged, so a path would be a sequence $e_1e_2\dots e_n$ of
edges in $E$ such that $s(e_i)=r(e_{i+1})$ for
$i\in\{1,2,\dotsc,n-1\}$). We regard vertices as paths of length 0 and
edges as paths of length 1. We denote by $E^n$ the set of paths of length
$n$ in $E$ and write $E^*$ for the set $\bigcup_{n\in\No}E^n$. We
write $|u|$ for the length of a path $u\in E^*$, and we let $E^{\le n}$ be the collection of paths $u$
with $|u|\le n$. We extend the range
and source maps to $E^*$ by setting $s(u)=s(e_1)$ and $r(u)=r(e_n)$
for $u=e_1e_2\dots e_n\in E^n$ with $n\ge 1$, and $s(v)=r(v)=v$ for
$v\in E^0$. If $v\in E^0$, then we let $vE^n=\{u\in E^n\mid s(u)=v\}$ and $E^nv=\{u\in E^n\mid r(u)=v\}$, $vE^*=\{u\in E^*\mid s(u)=v\}$ and $E^*v=\{u\in E^*\mid r(u)=v\}$. We let $E^0_\reg=\{v\in E^0\mid vE^1\text{ is finite and non-empty}\}$.
If $u=e_1e_2\dots e_n$ and $u'=e'_1e'_2\dots e'_m$ are
paths with $r(u)=s(u')$, then we write $uu'$ for the path $e_1e_2\dots
e_ne'_1e'_2\dots e'_m$ obtained from concatenation of the two paths.

We recall from, for example \cite{MR1988256}, \cite{MR2135030} and
\cite{MR1914564} that the $\cs$-algebra $\cs(E)$ of the graph $E$ is defined as the
universal $\cs$-algebra generated by a \emph{Cuntz-Krieger $E$-family}  $(s_e,p_v)_{e\in
  E^1,v\in E^0}$ consisting of partial isometries $(s_e)_{e\in E^1}$
with mutually orthogonal range projections
and mutually orthogonal projections $(p_v)_{v\in E^0}$ satisfying
\begin{enumerate}\renewcommand{\theenumi}{CK\arabic{enumi}}
\item\label{it:CK1} $s_e^*s_e=p_{r(e)}$ for all $e\in E^1$,
\item\label{it:CK2} $s_e s_e^*\le p_{s(e)}$ for all $e\in E^1$,
\item\label{it:CK3} $p_v=\sum_{e\in vE^1}s_e s_e^*$ for $v\in E^0_\reg$.
\end{enumerate}

In \cite{MT}, \emph{relative graph $C^*$-algebras} were introduced. To define a relative graph $C^*$-algebra we must in addition to a directed graph $E$ specify a subset $R$ of $E^0_\reg$.
The \emph{relative graph $\cs$-algebra} $\cs(E,R)$ of the graph $E$ relative to $R$ is then defined as the universal $\cs$-algebra generated by a \emph{Cuntz-Krieger $(E,R)$-family} $(s_e,p_v)_{e\in
  E^1,v\in E^0}$ consisting of partial isometries $(s_e)_{e\in E^1}$
with mutually orthogonal range projections
and mutually orthogonal projections $(p_v)_{v\in E^0}$ satisfying \eqref{it:CK1} and \eqref{it:CK2} above plus the relation
\begin{enumerate}\renewcommand{\theenumi}{RCK3}
\item\label{it:RCK3} $p_v=\sum_{e\in vE^1}s_e s_e^*$ for $v\in R$.
\end{enumerate}

\begin{remark} \textnormal{(a)} If $R=E^0_\reg$, then a Cuntz-Krieger $(E,R)$-family is the same as a Cuntz-Krieger $E$-family and, consequently,  $C^*(E,R)=C^*(E)$.

\textnormal{(b)} If $R=\emptyset$, we claim that $C^*(E,R)$ is the Toeplitz algebra $\mathcal{T}C^*(E)$ introduced in \cite{FR}. Indeed, in this case the relation \eqref{it:CK3} is vacuous, and \eqref{it:CK2} in connection with the assumption that the ranges of the $s_e$'s  are mutually orthogonal imply that
\begin{equation}\label{eq:Toeplitz-relation}
p_v\geq \sum_{e\in F}s_e s_e^*
\end{equation}
whenever $F$ is a finite subset of $vE^1$. As remarked in \cite[Lemma 1.1]{aHLRS}, the converse holds, thus \eqref{eq:Toeplitz-relation} alone implies that the ranges of the $s_e$'s are mutually orthogonal.
\end{remark}

It follows from the universal properties of $C^*(E,R)$ that there is a strongly continuous action (the \emph{gauge action}) $\gamma:\T\to\aut(C^*(E,R))$ satisfying for all $z\in\T$ that $\gamma_z(p_v)=p_v$ for $v\in E^0$, and $\gamma_z(s_e)=zs_e$ for $e\in E^1$.

In order to obtain a different picture of $C^*(E,R)$ we now turn to the path space and the boundary path space. An \emph{infinite path} in $E$ is an infinite sequence $e_1e_2\dots$ of
edges in $E$ such that $r(e_i)=s(e_{i+1})$ for
$i\in\N$. We write $E^{\infty}$ for the set of infinite paths in $E$.

The \emph{path space} of the graph is
$E^{\le\infty}:=E^*\cup E^\infty$.
When $R$ is a subset of $E^0_\reg$, the \emph{relative boundary path space} $\partial_R E$ is defined by
\begin{equation*}
  \partial_R E=E^\infty\cup\{u\in E^*\mid r(u)\notin R\}.
\end{equation*}
Equivalently, $\partial_R E=E^{\le\infty}\setminus \{u\in E^*\mid r(u)\in R\}$.
If $R=E^0_\reg$, then the relative boundary path space $\partial_R E$ is also called the \emph{boundary path space} and is denoted by $\partial E$, see e.g. \cite{W}.

Given $u=e_1e_2\dots e_n\in E^n$ and $0\leq m\leq n$, we let $u(0,m)$ denote the path $e_1e_2\dots e_m $ if
$1\le m\le n$, and $s(u)$ if $m=0$. Likewise, for an infinite path $x=e_1e_2\dots\in
E^\infty$ and $m\ge 1$, we denote $x(0,m)$ the path $e_1e_2\dots e_m$, and if $m=0$, then we write $x(0,m)$
or $s(x)$ for the vertex $s(e_1)$. Extending our convention for concatenation of finite paths, if
$u=e_1e_2\dots e_n\in E^n$
and $x=e'_1e'_2\dots\in E^\infty$ with $r(u)=s(x)$, then we write $ux$
for the resulting infinite path.

In order to compare finite paths with arbitrary paths, we introduce the following notation: for $u\in E^*$ and $x\in E^{\le\infty}$
we write $u\le x$ if
$|u|\le |x|$ and $x(0,|u|)=u$. Further, we write $u<x$ if $u\le x$ and $u\ne
x$. For $u\in E^*$ the \emph{cylinder set of
  $u$}  $\cyl{u}$ is defined by
\begin{equation}\label{def:cylinder-set}
  \cyl{u}=\{x\in E^{\le\infty}\mid u\le x\}.
\end{equation}
We denote by $\mathcal{F}(E^*)$ the collection of finite subsets of the space $E^*$. Given $u\in E^*$ and
$F\in \mathcal{F}(E^*)$, we let
\begin{equation}\label{def:ZFu}
\cylf{u}=\cyl{u}\setminus\Bigl(\bigcup_{\underset{u\leq u'}{u'\in F}}\cyl{u'}\Bigr);
\end{equation}
note that this is the empty set precisely when $u\in F$. In particular, if $Z_F(u)$ is non-empty, then it contains $u$.

The statements \eqref{item:16}-\eqref{item:19} in the next result can be found in \cite{W}.  We include them in order to have
the necessary terminology at hand for proving \eqref{item:20}-\eqref{item:22}.

\begin{proposition} \label{prop:et}
  Let $E$ be a directed graph and endow the path space $E^{\le\infty}$
  with the topology generated by $\{\cyl{u}\mid u\in
    E^*\}\cup\{E^{\le\infty}\setminus\cyl{u}\mid u\in
    E^*\}$. We then have:
  \begin{enumerate}\renewcommand{\theenumi}{\roman{enumi}}
  \item\label{item:16} $E^{\le\infty}$ is a totally disconnected
    locally compact Hausdorff topological space.
  \item\label{item:17} $E^{\le\infty}$ is compact if and only if $E^0$ is finite.
  \item\label{item:18} The system $
      \bigl\{\cylf{u}\mid
      u\in E^*,\ F \in \mathcal{F}(E^*)\bigr\}$
    is a basis of open and compact subsets for the topology of
    $E^{\le\infty}$.
  \item\label{item:19} If $u\in E^*$, then the system $
    \bigl\{\cylf{u}\mid
    F \in  \mathcal{F}(E^*)
    \text{ such that }\cylf{u}\ne\emptyset\bigr\}$
    is a neighbourhood basis for $u$.
  \item\label{item:20} Every $u\in E^*$ for which $r(u)E^1$ is a finite set, is isolated.
  \item\label{item:21} $E^*$ is dense in $E^{\le\infty}$.
  \item\label{item:22} The closure of $E^\infty\cup\{u\in E^*\mid
    r(u)E^1=\emptyset\}$ in $E^{\le\infty}$ is $\partial E$.
  \end{enumerate}
\end{proposition}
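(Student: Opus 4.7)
The plan is to establish the three new items \eqref{item:20}--\eqref{item:22}; items \eqref{item:16}--\eqref{item:19} are taken as given. Throughout I will work with the compact-open basis from \eqref{item:18} and the neighbourhood basis for points of $E^*$ from \eqref{item:19}.

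For \eqref{item:20}, fix $u \in E^*$ with $r(u)E^1$ finite and take $F := \{ue : e \in r(u)E^1\}$, with the convention $F = \emptyset$ if $r(u)$ is a sink. I claim $\cylf{u} = \{u\}$. Indeed $u \in \cylf{u}$ because $u \notin F$; conversely, any $x \in \cyl{u}$ with $x \ne u$ satisfies $|x| > |u|$, so its $(|u|+1)$-th edge $e$ lies in $r(u)E^1$, whence $ue \le x$ and $x \in \cyl{ue}$, so $x \notin \cylf{u}$. Hence $\{u\}$ is open and $u$ is isolated.

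Item \eqref{item:21} then follows at once from \eqref{item:18}: every non-empty basic open set $\cylf{u}$ contains $u \in E^*$, so $E^*$ meets every non-empty open set. For \eqref{item:22}, set $A := E^\infty \cup \{u \in E^* : r(u)E^1 = \emptyset\}$. Clearly $A \subseteq \partial E$, and $\partial E$ is closed because its complement $\{u \in E^* : r(u) \in E^0_\reg\}$ consists of finite paths whose range emits finitely many but at least one edge; each such $u$ is isolated by \eqref{item:20}, so the complement is a union of open singletons. This yields $\overline{A} \subseteq \partial E$.

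For the reverse inclusion, take $x \in \partial E \setminus A$; then $x \in E^*$ and $r(x)E^1$ is infinite. Given a non-empty basic neighbourhood $\cylf{x}$ of $x$, let $G \subseteq r(x)E^1$ be the finite set of $(|x|+1)$-th edges of those $u' \in F$ with $x < u'$, and pick $e \in r(x)E^1 \setminus G$. Starting from $xe$, I extend one edge at a time, appending any outgoing edge when one is available and stopping only upon reaching a sink. The resulting path $y$ is either infinite or terminates at a vertex emitting no edges, so $y \in A$; moreover $x \le y$ and the $(|x|+1)$-th edge of $y$ is $e \notin G$, which forces $y \notin \cyl{u'}$ for every $u' \in F$ with $x \le u'$. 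Thus $y \in \cylf{x} \cap A$, and since the $\cylf{x}$ form a neighbourhood basis of $x$ by \eqref{item:19}, we conclude $x \in \overline{A}$. The main obstacle lies in this final step: one must simultaneously dodge the finitely many first-edge constraints imposed by $F$ and then produce an element of $A$; it is precisely the infinitude of $r(x)E^1$ that makes both the initial avoidance and the subsequent greedy extension go through.
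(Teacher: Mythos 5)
Your proof is correct and follows essentially the same route as the paper: isolate $u$ via $\{u\}=Z_F(u)$ with $F=\{ue\mid e\in r(u)E^1\}$, get density from the fact that a non-empty $Z_F(u)$ contains $u$, and for \eqref{item:22} show $\partial E$ is closed using \eqref{item:20} and approximate a finite path with infinite $r(u)E^1$ by choosing an edge avoiding the finitely many constraints from $F$ and extending to an element of $E^\infty\cup\{u\mid r(u)E^1=\emptyset\}$. If anything, your version is slightly more explicit than the paper's at the last step, where you note that $e$ must avoid the $(|x|+1)$-th edges of \emph{all} $u'\in F$ properly extending $x$ so that every further extension stays in $Z_F(x)$.
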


\begin{proof}
 \eqref{item:20}: If $u\in E^*$, then $\{u\}=\cyl{u}\setminus\Bigl(\bigcup_{e\in r(u)E^1}\cyl{ue}\Bigr)$.
  Thus if $r(u)E^1$ is a finite set $F$, then $\{u\}=\cylf{u}$, so $u$ is isolated.

  \eqref{item:21}: Given $x\in E^\infty$, suppose $x\in Z_F(u)$ for some $u\in E^*$ and $F\in \mathcal{F}(E^*)$ with
  $u\le u'$ for every $u'\in F$. Then $x(0,\vert u\vert)\in Z_F(u)$, hence $u\in Z_F(u)\cap E^*$.
  It follows that
  $E^*$ is dense in $E^{\le\infty}$.

  \eqref{item:22}: If $u\in E^*$, $r(u)E^1$ is infinite, and $F$ is a finite subset of
  $E^*$ such that $u\le u'$ for every $u'\in F$, then there exists
  at least one $e\in r(u)E^1$ such that $ue\in
  \cylf{u}$, and thus at least one
  element in $E^\infty\cup\{u'\in E^*\mid
  s\inv(r(u'))=\emptyset\}$ which belongs to
  $\cylf{u}$. Hence $\partial E$ is
  contained in the closure of $E^\infty\cup\{u\in
  E^*\mid r(u)E^1=\emptyset\}$. That $\partial E$ is closed follows from
  the fact that every $u\in E^{\le\infty}\setminus\partial E$ is such that $r(u)E^1$ is finite (and non-empty),
  and therefore $u$ is isolated by \eqref{item:20}.  This proves \eqref{item:22}.
\end{proof}

\begin{corollary} \label{cor:rps}
	Let $E$ be a directed graph and let $R$ be a subset of $E^0_\reg$. Equip the relative boundary path space $\partial_R E$ with the topology it inherits from $E^{\le\infty}$ when the latter is given the topology described in Proposition \ref{prop:et}. Then $\partial_R E$ is a totally disconnected locally compact Hausdorff topological space.
\end{corollary}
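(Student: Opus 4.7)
The plan is to derive the three properties of $\partial_R E$ from the corresponding properties of $E^{\le\infty}$ established in Proposition~\ref{prop:et}. Since subspaces of Hausdorff spaces are Hausdorff and subspaces of totally disconnected spaces are totally disconnected, the Hausdorff and total disconnectedness properties of $\partial_R E$ are immediate from Proposition~\ref{prop:et}(\ref{item:16}). The real content is local compactness.

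For local compactness, my strategy is to show that $\partial_R E$ is a closed subset of the locally compact Hausdorff space $E^{\le\infty}$; local compactness of $\partial_R E$ then follows from the general fact that closed subspaces of locally compact Hausdorff spaces are locally compact.

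To show $\partial_R E$ is closed, I will show that its complement in $E^{\le\infty}$, namely the set $\{u\in E^*\mid r(u)\in R\}$, is open. The key observation is that $R\subseteq E^0_\reg$, so for every $u$ in this complement, $r(u)E^1$ is finite (and non-empty). By Proposition~\ref{prop:et}(\ref{item:20}), every such $u$ is isolated in $E^{\le\infty}$, i.e.\ $\{u\}$ is open. Hence the complement is a union of open singletons, and therefore open, so $\partial_R E$ is closed.

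I do not expect any real obstacle: the only non-formal ingredient is the use of Proposition~\ref{prop:et}(\ref{item:20}) to see that paths ending at a regular vertex are isolated, and this applies precisely because $R$ is contained in $E^0_\reg$. The argument also explains why the definition of $\partial_R E$ restricts $R$ to regular vertices: if $R$ were allowed to contain infinite emitters or sinks, the excluded finite paths would not in general be isolated and the resulting subspace need not be closed.
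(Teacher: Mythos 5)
Your argument is correct and coincides with the paper's own proof: both establish that the complement $\{u\in E^*\mid r(u)\in R\}$ consists of isolated points via Proposition~\ref{prop:et}\eqref{item:20} (using $R\subseteq E^0_\reg$), conclude that $\partial_R E$ is closed in $E^{\le\infty}$, and then inherit the three topological properties from Proposition~\ref{prop:et}\eqref{item:16}. No gaps.
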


\begin{proof}
According to Proposition \ref{prop:et}\eqref{item:16}, $E^{\le\infty}$ is a totally disconnected locally compact Hausdorff topological space. If $u\in E^{\le\infty}\setminus \partial_RE$, then $u\in E^*$ and $r(u)\in R\subseteq E^0_\reg$, so $u$ is isolated according to \ref{prop:et}\eqref{item:20}. It follows that $\partial_R E$ is a closed subset of $E^{\le\infty}$ and therefore a totally disconnected locally compact Hausdorff topological space.
\end{proof}

Let $E$ be a directed graph and $R$ a subset of $E^0_\reg$. Let $\free$ denote the free group generated by $E^1$. An edge $e\in E^1$ will still be denoted $e$
as an element of $\free$, and $e^{-1}$ will denote the inverse of $e$ in $\free$.
We shall view $E^*\setminus E^0$ as a subset of $\free$
upon identifying a path $u=e_1e_2\dots e_n$ with the element in $\free$ obtained by multiplication of $e_1,\dots, e_n$. The identity
 element of $\free$ will be denoted $\31$.  An element $g\in\free$ is in \emph{reduced form} if $g=a_na_{n-1}\dotsm a_1$ for $a_1,a_2,\dotsc,a_n\in E^1\cup\{e\inv\mid e\in
E^1\}$ such that $a_i\ne a_{i+1}\inv$ whenever $i\in\{1,2,\dotsc,n-1\}$. We denote
$|g|$ the number of generators in the reduced form of $g$. Notice that this use of $|u|$ agrees with the previously defined
use of $|u|$ as the length of an element $u\in E^*\setminus E^0$.

Now we construct a semi-saturated and orthogonal partial action of $\free$ on $\partial_RE$. We will do this by defining open and compact subsets $\tdomain{g}$ and $\trang{g}$ of $\partial_RE$ together with a homeomorphism $\tpa_g$ taking $\tdomain{g}$ onto $\trang{g}$ and satisfying the axioms of a partial action as $g\in \free$.

First, let $\tpa_{\31}$ denote the identity map on $\partial_RE$ and let
$\tdomain{\31}=\trang{\31}=\partial_RE$. For $e\in E^1$, let
$\trang{e}=\cyl{e}\cap \partial_RE$ and $\tdomain{e}=\cyl{r(e)}\cap \partial_RE$, and define maps
\begin{align}
\tpa_e:\tdomain{e}\to \partial_RE,\ &\tpa_e:x\mapsto ex,\label{eq:phi-e}\\
\tpa_{e\inv}:\trang{e}\to \partial_RE,\  &\tpa_{e\inv}:ex\mapsto x.\label{eq:phi-einv}
\end{align}
Let $g=a_na_{n-1}\dotsm a_1\in\free$ be in reduced form. We will define $\tdomain{(a_ia_{i-1}\dotsm a_1)}$ and $\tpa_{a_ia_{i-1}\dotsm a_1}:\tdomain{(a_ia_{i-1}\dotsm a_1)} \to \partial_RE$
for all $i\in\{1,2,\dots,n\}$ recursively. For $i=1$, $\tdomain{a_1}$, $\trang{a_1}$ and $\tpa_{a_1}:\tdomain{a_1}\to \partial_RE$ have already been defined. For
$i>1$, we let
\begin{gather}
\tdomain{(a_ia_{i-1}\dotsm a_1)} = \tpa_{a_{i-1}\dotsm a_1}\inv(\tdomain{a_i}) \text{ and}\label{eq:def-Ug}\\
\tpa_{a_ia_{i-1}\dotsm a_1}(x) = \tpa_{a_i}(\tpa_{a_{i-1}\dotsm a_1}(x))\text{ for }x\in \tdomain{(a_ia_{i-1}\dotsm a_1)}.\label{eq:phi-ai}
\end{gather}

For later use, we record some properties of the sets $U(g)$ and the maps $\phi_g$.

\begin{lemma}\label{lem:alternative-U-phi} The sets $U(g)$ and the maps $\phi_g$ for $g\in \free$ defined by \eqref{eq:def-Ug}-\eqref{eq:phi-ai} satisfy the following:
\begin{enumerate}\renewcommand{\theenumi}{\roman{enumi}}
  \item \label{item:g1} $\domain{\31}=\rang{\31}=\partial_R E$ and $\pa_{\31}(x)=x$ for $x\in \domain{\31}$.
		\item \label{item:g2} If $u\in E^*\setminus E^0$, then $\domain{u}=\cyl{r(u)}\cap\partial_R E$, $\rang{u}=\cyl{u}\cap\partial_R E$ and $\pa_u(x)=ux$ for $x\in \domain{u}$.
		\item \label{item:g3} If $u=e_1\dots e_m,u'=e'_1\dots e'_n\in E^*\setminus E^0$, $e_m\ne e'_n$ and $r(u)=r(u')$, then $\rang{u'u\inv}=\cyl{u'}\cap\partial_R E$ and $\pa_{u(u')\inv}(u'x)=ux$ for $x\in\rang{u'u\inv}$.
		\item \label{item:g4} If $g$ does not belong to $\{\31\}\cup\{u\mid u\in E^*\setminus E^0\}\cup\{u\inv\mid u\in E^*\setminus E^0\}\cup\{u(u')\inv\mid u,u'\in E^*\setminus E^0,\ r(u)=r(u')\}$, then $\domain{g}=\rang{g}=\emptyset$.
  \end{enumerate}
\end{lemma}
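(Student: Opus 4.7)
The plan is to prove all four parts simultaneously by induction on the reduced word length $n=|g|$. The base case $n=0$ is exactly \eqref{item:g1}; the case $n=1$ covers \eqref{item:g2} for a single edge $g=e$ and its counterpart $g=e^{-1}$, and is immediate from \eqref{eq:phi-e}--\eqref{eq:phi-einv}.

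For the inductive step with $n\ge 2$, I would write $g=a_ng'$ with $g'=a_{n-1}\cdots a_1$ in reduced form of length $n-1$, so that \eqref{eq:def-Ug}--\eqref{eq:phi-ai} give $U(g^{-1})=\phi_{g'}^{-1}(U(a_n^{-1}))$ and $\phi_g=\phi_{a_n}\circ\phi_{g'}$ on this domain. A key combinatorial observation is that the union of the four families appearing in \eqref{item:g1}--\eqref{item:g3}, namely $\{\31\}$, $E^*\setminus E^0$, $(E^*\setminus E^0)^{-1}$, and $\{u(u')^{-1}:u,u'\in E^*\setminus E^0,\ r(u)=r(u')\}$, is closed under deletion of the leftmost letter. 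Consequently, if $g'$ falls under \eqref{item:g4} then so does $g$; the induction hypothesis gives $U(g'^{-1})=\emptyset$, whence $U(g^{-1})\subseteq U(g'^{-1})=\emptyset$ and $U(g)=\phi_g(U(g^{-1}))=\emptyset$.

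The substantive part is the case analysis when $g'$ lies in one of \eqref{item:g1}--\eqref{item:g3}. I would split according to the form of $g'$ (path $v$, inverse path $v^{-1}$, or product $v'v^{-1}$) and the sign of $a_n$, observing the reducedness constraint $a_n\ne a_{n-1}^{-1}$. In each sub-case I substitute the inductively known formula for $\phi_{g'}$ to rewrite $\phi_{g'}^{-1}(U(a_n^{-1}))$ as an intersection of cylinder sets in $\partial_RE$, and identify the resulting set together with $\phi_{a_n}\circ\phi_{g'}$ with the formula predicted by the appropriate instance of \eqref{item:g2} or \eqref{item:g3}. When the preimage turns out to be empty, I verify by inspection that $g$ matches none of the forms in \eqref{item:g2}--\eqref{item:g3}, so that \eqref{item:g4} applies. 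The statement about $U(g)$ in \eqref{item:g2} and \eqref{item:g3} is then obtained as the image $\phi_g(U(g^{-1}))$.

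The main obstacle is not any single step but the bookkeeping. The delicate situation is case \eqref{item:g3}: prepending a positive letter $e$ to $g'=v'v^{-1}$ with $r(e)=s(v')$ produces $g=(ev')v^{-1}$, and one must check that the resulting word is still reduced, that the last edge of $ev'$ differs from the last edge of $v$, and that $r(ev')=r(v)$, all of which hold automatically from the hypotheses on $g'$ together with the reducedness of $a_ng'$. The remaining ways of prepending a letter to a word in \eqref{item:g3}, namely a positive letter with $r(e)\ne s(v')$ or any negative letter, produce a reduced word fitting none of the forms in \eqref{item:g2}--\eqref{item:g3}, so that $g$ belongs to \eqref{item:g4}.
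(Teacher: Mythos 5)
Your proof is correct and follows the same underlying idea as the paper's: unwind the recursive definition \eqref{eq:def-Ug}--\eqref{eq:phi-ai} letter by letter, noting that a ``bad'' adjacent pair ($e^{-1}e'$ with $e\ne e'$, $ee'$ with $r(e)\ne s(e')$, or the inverse configuration) forces the domain to be empty from that point on. The paper merely sketches this (declaring \eqref{item:g2}--\eqref{item:g3} ``easy'' and locating a bad factor for \eqref{item:g4}), whereas you organize it as a simultaneous induction on $|g|$ with the closure-under-deletion observation; this is a more systematic rendering of the same argument, not a different route.
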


\begin{proof}
	Claim \eqref{item:g1} follows directly from the definition of $\tdomain{\31}$, $\trang{\31}$ and $\tpa_{\31}$, and \eqref{item:g2} and \eqref{item:g3} follow easily from \eqref{eq:phi-e}--\eqref{eq:phi-ai}.
	
	To prove \eqref{item:g4}, notice that if $g$ does not belong to $\{\31\}\cup\{u\mid u\in E^*\setminus E^0\}\cup\{u\inv\mid u\in E^*\setminus E^0\}\cup\{u(u')\inv\mid u,u'\in E^*\setminus E^0,\ r(u)=r(u')\}$, then the reduced form of $g$ either contains a factor of the form $e\inv e'$ with $e,e'\in E^1$ and $e\ne e'$, a factor of the form $ee'$ with $e,e'\in E^1$ and $r(e)\ne s(e')$, or a factor of the form $(e')\inv e\inv $ with $e,e'\in E^1$ and $s(e)\ne r(e')$. It follows from \eqref{eq:phi-e}--\eqref{eq:phi-ai} that $\tdomain{g}=\trang{g}=\emptyset$ in each of these cases.
\end{proof}

\begin{proposition} \label{prop:action-on-path-space}
  Let $E$ be a directed graph and let $R$ be a subset of $E^0_\reg$. Equip the relative boundary path space $\partial_R E$ with the topology described in Corollary \ref{cor:rps}, let $\free$ be the free group generated by $E^1$, and let for each $g\in\free$ the set $\tdomain{g}$ and the map $\tpa_g:\tdomain{g}\to \partial_R E$ be as defined above.  We then have:
	\begin{enumerate}\renewcommand{\theenumi}{\roman{enumi}}
		\item \label{item:g5} For each $g\in\free$, the set $\rang{g}$ is an open and compact subset of $\partial_R E$ and $\pa_g$ is a homeomorphism from $\domain{g}$ onto $\rang{g}$ with inverse $\pa_{g\inv}$.
		\item \label{item:g6}  $\pa_{g_1}(\domain{g_1}\cap\rang{g_2})=\rang{g_1}\cap\rang{g_1g_2}$ for $g_1,g_2\in\free$.
		\item \label{item:g7} $\pa_{g_1}\circ\pa_{g_2}=\pa_{g_1g_2} \text{ on } \domain{g_2}\cap\domain{(g_1g_2)}$ for $g_1,g_2\in\free$.
		\item \label{item:g8} $\Phi=(\pa_g)_{g\in\free}$ is a semi-saturated and orthogonal partial action of $\free$ on $\partial_R E$ as in \cite[Section 2]{MR1703078}.
	\end{enumerate}
\end{proposition}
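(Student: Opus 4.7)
The strategy is to exploit the explicit description provided by Lemma~\ref{lem:alternative-U-phi}: outside the identity and the three classes $u$, $u\inv$, $u(u')\inv$ with $u,u'\in E^*\setminus E^0$, the sets $\trang{g}$ are empty, and in the non-empty cases $\trang{g}=\cyl{u}\cap\partial_R E$ for some $u\in E^*$, while $\tpa_g$ is a concatenation, a truncation, or a truncation followed by a concatenation. With this in hand, each of \eqref{item:g5}-\eqref{item:g8} reduces to a mechanical verification.

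For \eqref{item:g5}, the set $\cyl{u}\cap\partial_R E$ is open in $\partial_R E$ as the trace of the basic open set $\cyl{u}$ of $E^{\le\infty}$ (Proposition~\ref{prop:et}\eqref{item:18} with $F=\emptyset$), and compact because $\cyl{u}$ is compact and $\partial_R E$ is closed in $E^{\le\infty}$ by Corollary~\ref{cor:rps}. The generating maps $\tpa_e,\tpa_{e\inv}$ of \eqref{eq:phi-e}-\eqref{eq:phi-einv} send basic cylinders to basic cylinders and are mutually inverse, hence are homeomorphisms. Propagating by induction on $|g|$ through the recursion \eqref{eq:def-Ug}-\eqref{eq:phi-ai} yields that $\tpa_g$ is a homeomorphism $\tdomain{g}\to\trang{g}$ with inverse $\tpa_{g\inv}$ for every $g\in\free$.

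Item \eqref{item:g7} is proved by induction on $|g_1|+|g_2|$, splitting according to whether cancellation occurs at the junction of the reduced words. In the non-cancellative case the identity on $\tdomain{g_2}\cap\tdomain{g_1g_2}$ is built into the very definition of $\tpa_{g_1g_2}$. If cancellation does occur, write $g_1=g_1'a$ and $g_2=a\inv g_2'$ in reduced form, peel off the matching pair $a,a\inv$ using \eqref{item:g5}, and apply the inductive hypothesis to the strictly shorter words $g_1'$ and $g_2'$. Item \eqref{item:g6} then follows from \eqref{item:g7}: for $x\in\tdomain{g_1}\cap\trang{g_2}$ we have $y:=\tpa_{g_1}(x)\in\trang{g_1}$, and writing $x=\tpa_{g_2}(\tpa_{g_2\inv}(x))$ and checking from \eqref{eq:def-Ug} that $\tpa_{g_2\inv}(x)\in\tdomain{g_1g_2}$ gives $y=\tpa_{g_1g_2}(\tpa_{g_2\inv}(x))\in\trang{g_1g_2}$ by \eqref{item:g7}; the reverse inclusion is symmetric.

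Finally \eqref{item:g8} is a bookkeeping consequence of \eqref{item:g1}-\eqref{item:g7}. The axioms of a partial action in \cite[Section~2]{MR1703078} are packaged as \eqref{item:g1}, \eqref{item:g5}, and \eqref{item:g7}; semi-saturation, namely $\tpa_{g_1g_2}=\tpa_{g_1}\circ\tpa_{g_2}$ whenever $|g_1g_2|=|g_1|+|g_2|$, is built into the recursion \eqref{eq:def-Ug}-\eqref{eq:phi-ai}; orthogonality, $\trang{e}\cap\trang{f}=\emptyset$ for distinct $e,f\in E^1$, holds since a path has at most one initial edge, so $\cyl{e}\cap\cyl{f}=\emptyset$. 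The main obstacle is the case analysis in \eqref{item:g7}: the four classes from Lemma~\ref{lem:alternative-U-phi} combine into several sub-configurations, and within the cancellative ones one must further track whether the matching prefix exhausts its word and verify that the source-range data remain compatible. None of these sub-cases is deep, but writing them out carefully is the real labour of the proof.
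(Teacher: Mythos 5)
Your proposal is correct in substance and rests on the same underlying computation as the paper's proof, namely cancellation bookkeeping in reduced words via Lemma~\ref{lem:alternative-U-phi} and the recursion \eqref{eq:def-Ug}--\eqref{eq:phi-ai}, but it organizes the argument differently. The paper first isolates the identity $\trang{a_n\dotsm a_1}=\tpa_{a_n\dotsm a_i}(\tdomain{(a_n\dotsm a_i)}\cap\trang{a_{i-1}\dotsm a_1})$, uses it together with the explicit cancellation index $i$ at the junction of $g_1$ and $g_2$ to compute \eqref{item:g6} directly, and only then deduces the domain statement of \eqref{item:g7} from \eqref{item:g5} and \eqref{item:g6} before checking the composition formula. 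You reverse the order: \eqref{item:g7} by induction on $\abs{g_1}+\abs{g_2}$ with a peel-off-the-cancelling-pair step, then \eqref{item:g6} as a corollary. Both routes work; the paper's buys an explicit closed-form description of all the sets involved (which it reuses later), while yours trades that for a shorter inductive scheme. The one soft spot is your derivation of \eqref{item:g6}: the assertion that $\tpa_{g_2\inv}(x)\in\tdomain{(g_1g_2)}$ can be ``checked from \eqref{eq:def-Ug}'' hides exactly the cancellation analysis you were trying to avoid, since $\tdomain{(g_1g_2)}$ is defined through the reduced form of $g_1g_2$. A cleaner way to stay inside your framework is to apply \eqref{item:g7} to the pair $(g_1g_2,\,g_2\inv)$, giving $\tpa_{g_1g_2}\circ\tpa_{g_2\inv}=\tpa_{g_1}$ on $\trang{g_2}\cap\tdomain{g_1}$, from which both membership in $\tdomain{(g_1g_2)}$ and the forward inclusion follow at once; the pair $(g_2\inv,\,g_1\inv)$ handles the reverse inclusion. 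Finally, a small correction: the axioms of a partial action in \cite[Section 2]{MR1703078} are not packaged as \eqref{item:g1}, \eqref{item:g5} and \eqref{item:g7} alone --- item \eqref{item:g6} \emph{is} one of the axioms (the condition $\theta_g(D_{g\inv}\cap D_h)=D_g\cap D_{gh}$), so it must be listed among the verifications rather than treated as a convenience; since you do prove it, this is only a misstatement, not a gap.
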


\begin{proof} Item \eqref{item:g5} follows directly from \eqref{item:g1}--\eqref{item:g4} in Lemma~\ref{lem:alternative-U-phi}
and Proposition \ref{prop:et}.
	
	For \eqref{item:g6}, notice first that \eqref{eq:def-Ug}, \eqref{eq:phi-ai} and \eqref{item:g5} imply that if $a_n\dotsm a_1$ is an element of $\free$ in reduced form, then
\begin{equation}\label{eq:alternative-phi}
\trang{a_n\dotsm a_1}=\tpa_{a_n\dotsm a_i}(\tdomain{(a_n\dotsm a_i)}\cap\trang{a_{i-1}\dotsm a_1})
 \end{equation}
 for any $i\in\{2,\dots,n\}$. If $g_1=a_m\dotsm a_1$ and $g_2=a_n'\dotsm a_1'$ are in reduced forms and we let $i$ be the largest nonnegative integer such that $a_1\inv\dotsm a_i\inv=a_n'\dotsm a_{n+1-i}'$ (with $i=0$ if $a_1\inv\ne a_n'$), then $a_m\dotsm a_{i+1}a_{n-i}'\dotsm a_1'$ is the reduced form of $g_1g_2$ (where $a_m\dotsm a_{i+1}=\31$ if $i=m$, and $a_{n-i}'\dotsm a_1'=\31$ if $i=n$). It follows from \eqref{eq:alternative-phi} that
	\begin{align*}
		\tdomain{g_1}\cap\trang{g_2}&=\trang{a_1\inv\dotsm a_m\inv}\cap\trang{a_n'\dotsm a_1'}\\
		&=\tpa_{a_1\inv\dotsm a_i\inv}(\trang{(a_i\dotsm a_1}\cap\trang{a_{i+1}\inv\dotsm a_m\inv}\cap\trang{a_{n-i}'\dotsm a_1'}).
	\end{align*}
	Once again using \eqref{eq:alternative-phi}, we have
	\begin{align*}
		\tpa_{g_1}(\tdomain{g_1}\cap\trang{g_2})&=\tpa_{a_m\dotsm a_{i+1}}(\trang{a_i\dotsm a_{1}}\cap\trang{a_{i+1}\inv\dotsm a_m\inv}\cap\trang{a_{n-i}'\dotsm a_1'})\\
		&=\trang{a_m\dotsm a_1}\cap\trang{a_m\dotsm a_{i+1}a_{n-i}'\dotsm a_1'} =\trang{g_1}\cap\trang{g_1g_2}.
	\end{align*}
	
	For \eqref{item:g7}, notice that it follows from \eqref{item:g5} and \eqref{item:g6} that $\tdomain{g_2}\cap\tdomain{(g_1g_2)}=\tpa_{g_2\inv}(\trang{g_2}\cap\trang{g_1})=\tpa_{g_2}\inv(\trang{g_1})$, so $\tpa_{g_1g_2}$ and $\tpa_{g_1}\circ\tpa_{g_2}$ are defined on the same subsets of $E^{\le\infty}$. If $g_1=a_m\dotsm a_1$, $g_2=a_n'\dotsm a_1'$ and $i$ are as above, then \eqref{eq:phi-ai} and \eqref{item:g5} imply that
	\begin{align*}
		\tpa_{g_1}\circ\tpa_{g_2}(x)&=\tpa_{a_m}\circ\dots\circ\tpa_{a_1}\circ\tpa_{a_n'}\circ\dots\circ\tpa_{a_1'}(x)\\
		&=\tpa_{a_m}\circ\dots\circ\tpa_{a_{i+1}}\circ\tpa_{a_{n-i}'}\circ\dots\circ\tpa_{a_1'}(x)=\tpa_{g_1g_2}(x)
	\end{align*}
	for $x\in\tdomain{g_2}\cap\tdomain{(g_1g_2)}$ proving \eqref{item:g7}.
	
	It follows from \eqref{item:g5}--\eqref{item:g7} that   $\Phi=(\tpa_g)_{g\in\free}$ is a partial action of $\free$ on $\partial_RE$. If $e,f\in E^1$ and $e\ne f$, then $\trang{e}\cap\trang{f}=\cyl{e}\cap\cyl{f}\cap\partial_RE=\emptyset$. Furthermore, if $g_1=a_m\dotsm a_1$ and $g_2=a_n'\dotsm a_1'$ are in reduced forms and $\abs{g_1g_2}=\abs{g_1}+\abs{g_2}$, then $a_m\dotsm a_1a_n'\dotsm a_1'$ is the reduced form of $g_1g_2$, so $\trang{g_1g_2}=\tpa_{g_1}(\tdomain{g_1}\cap\trang{g_2})\subset\trang{g_1}$. It therefore follows from \cite[Proposition 4.1]{MR1953065} that $\Phi$ is semi-saturated and orthogonal, showing the claim in  \eqref{item:g8}.
\end{proof}

\section{The graph algebras $C^*(E)$  and $\mathcal{T}C^*(E)$ as partial crossed products}\label{sec:cs-algebra-graph}

In this section we show that $C^*(E,R)$ can be realised as the full crossed product of the partial action $(C_0(\partial_R E), \free, \Phi)$ introduced in the previous section.

We start by introducing the terminology we need. There are different definitions of both a partial
representation of a discrete group on a Hilbert space (in e.g. \cite{MR1953065} and
\cite{MR1452280}) and of a covariant representation of a partial dynamical system $(A, G, \alpha)$ (in  \cite{MR1331978} and \cite{MR1452280}). However, reassuring equivalences of these definitions were shown in \cite{MR1452280}, and we refer to \cite[\S 1]{ELQ} for a brief but illuminating overview of the main concepts and constructions.

Suppose that $(A, G, \alpha)$ is a
partial dynamical system: thus for each $g\in G$  the maps $\alpha_g$ are $*$-isomorphisms between closed, two-sided ideals
$D_{g^{-1}}$ and $D_g$ of a $C^*$-algebra $A$ such that $\alpha_e=\id_A$ and $\alpha_{gh}$ extends $\alpha_g\alpha_h$ for all $g,h\in G$. The \emph{full crossed product} $A\rtimes_\alpha G$ is the enveloping $C^*$-algebra of the convolution $*$-algebra $\{f\in l^1(G, A)\mid f(g)\in D_g\}$ endowed with a suitable norm.

A partial representation of a group $G$ on a Hilbert space $H$ is a map $u:G\to B(H)$ such that  $u(e)=1$ (where here, $e$ denotes the neutral element of the group $G$),
$u_{g^{-1}}=u_g^*$ and $u_gu_hu_{h^{-1}}=u_{gh}u_{h^{-1}}$ for all $g,h\in G$, see \cite{MR1953065}. A \emph{covariant representation} of  $(A, G, \alpha)$ is a pair $(\pi,u)$ that consists of a nondegenerate representation of $A$ on a Hilbert space $H$ and a partial representation $u$ of
$G$ on $H$ such that for $g\in G$ we have
\begin{align}
	&u_gu_g^* \text{ is the projection onto the subspace } \spac\pi(D_g)H \text{ and}\label{p1}\\
	&u_g\pi(f)u_{g\inv}=\pi(f\circ\alpha_{g\inv}) \text{ for } f\in D_{g^{-1}}.\label{p2}
\end{align}

As pointed out by Quigg and Raeburn in \cite{MR1452280}, there is a universal covariant representation $(\iota, \delta)$ in the double dual $(A\rtimes_\alpha G)^{**}$ such that $A\rtimes_\alpha G$ is the closed linear span of finite sums of the form $\sum\iota(a)\delta_g$. Since $\delta_e=1$, it follows that $\iota(A)\subseteq  A\rtimes_\alpha G$, but we do not have in general that $\delta_g\in A\rtimes_\alpha G$. Since any partial system $(A,G, \alpha )$ admits covariant representations $(\pi,u)$ with $\pi$ faithful (e.g. the reduced covariant representation of \cite[Section 3]{MR1331978}), the representation $\iota$ is faithful.

Now we turn our attention back to the partial action of $\free$ on $\partial_R E$. In the sequel we regard $C_0(\trang{g})$ for $g\in\free$  as
an ideal of $C_0(\partial_RE)$ by letting $f(x)=0$ for $f\in
C_0(\trang{g})$ and $x\in \partial_RE\setminus \trang{g}$. If $f\in
C_0(\tdomain{g})$, then $f\circ\tpa_{g\inv}$ will denote the element of
$C_0(\trang{g})$ defined by
\begin{equation*}
  f\circ\tpa_{g\inv}(x)=
  \begin{cases}
    f(\tpa_{g\inv}(x))&\text{if }x\in\trang{g},\\
    0&\text{if }x\in \partial_RE\setminus\trang{g}.
  \end{cases}
\end{equation*}
Then the map $f\mapsto f\circ\tpa_{g\inv}$  is  a $*$-isomorphism
from $C_0(\tdomain{g})$ to $C_0(\trang{g})$. By a slight abuse of notation (which should not lead to any confusion) we
let $\tpa_g$ be the map $f\mapsto f\circ\tpa_{g\inv}$, and let $D_{g\inv}=C_0(\tdomain{g})$ be its domain while $D_g=C_0(\trang{g})$
is its range. Thus $\Phi=(\tpa_g)_{g\in\free}$ is a partial action of $\free$ on the $C^*$-algebra $C_0(\partial_RE)$.
The action is still semi-saturated and orthogonal.

We let $(\iota, \delta)$ denote the universal covariant representation of $(C_0(\partial_R E),\free,\Phi)$.
We recall from \cite[Theorem 4.3]{MR1953065} that given any function $N:E^1\to (1,\infty)$ there exists a unique strongly continuous one-parameter group $\sigma$ of automorphisms of $C_0(\partial_RE)\rtimes_{\Phi}\free$ such that
\begin{equation}\label{eq:sigma-from-N}
  \sigma_t(b)=\bigl(N(e)\bigr)^{it}b\,\text{ and }\,\sigma_t(c)=c
\end{equation}
for all $e\in E^1$, all $b\in\iota\bigl(C_0(\rang{e})\bigr)\delta_e$, and all $c\in\iota\bigl(C_0(\partial_RE)\bigr)$.

If  $N(e)=\exp(1)$ for every $e\in E^1$, then\footnote{Since $\exp(1)$ only makes a couple of appearances here, while elements in $E^1$ are used all throughout, we decided to give preference to the notation $e\in E^1$.} $\sigma_t$ is
$2\pi$-periodic, and so induces a strongly continuous action $\beta:\T\to\aut(C_0(\partial_RE)\rtimes_{\Phi}\free)$
such that $\beta(\delta_e)=z\delta_e\text{ and }\beta(f)=f$
for all $z\in\T$, $e\in E^1$, and $f\in C_0(\partial_RE)$.

When $U$ is a closed and open subset of $\partial_RE$, then $\cha{U}$ will denote the characteristic function of $U$.

\begin{theorem} \label{theorem:partial}
  Let $E$ be a directed graph and let $R$ be a subset of $E^0_\reg$. Let $\free$ be the free group generated by $E^1$, and let $\Phi$ be the partial action of $\free$ on $C_0(\partial_R E)$ described above. We then have:

\textnormal{(a)}\label{item:t1} There is a unique $*$-isomorphism $\rho:C^*(E,R)\to C_0(\partial_R E)\rtimes_{\Phi}\free$ which maps $p_v$ to $\iota(\cha{\cyl{v}\cap\partial_R E})$ for $v\in E^0$, and $s_e$ to $\delta_e$ for $e\in E^1$.
		
\textnormal{(b)}\label{item:t2} $\rho\circ\gamma_z=\beta_z\circ\rho$ for all $z\in\T$.
\end{theorem}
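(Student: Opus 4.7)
The plan is to construct $\rho$ from the universal property of $C^*(E,R)$, establish surjectivity by exhibiting the standard spanning family of the crossed product inside the image, and obtain injectivity from the gauge-invariant uniqueness theorem for relative graph algebras, with part~(b) falling out of the verification needed for injectivity.

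First I would set $P_v:=\iota(\cha{\cyl{v}\cap\partial_R E})$ for $v\in E^0$ and $S_e:=\iota(\cha{\rang{e}})\delta_e$ for $e\in E^1$ in $C_0(\partial_R E)\rtimes_\Phi\free$ and check that $(S_e,P_v)_{e\in E^1,v\in E^0}$ is a Cuntz-Krieger $(E,R)$-family. Mutual orthogonality of the $P_v$ is immediate from disjointness of the sets $\cyl{v}\cap\partial_R E$ for distinct $v$, and mutual orthogonality of the range projections $S_eS_e^*$ follows from $\rang{e}\cap\rang{f}=\emptyset$ for $e\ne f$ (Proposition~\ref{prop:action-on-path-space}). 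Using the covariance relations \eqref{p1}--\eqref{p2} one computes $S_e^*S_e=\iota(\cha{\domain{e}})=P_{r(e)}$ and $S_eS_e^*=\iota(\cha{\rang{e}})$, and then $\rang{e}\subseteq\cyl{s(e)}\cap\partial_R E$ yields \eqref{it:CK2}. For \eqref{it:RCK3}, when $v\in R$ we have $v\notin\partial_R E$ and $vE^1$ finite, so $\cyl{v}\cap\partial_R E=\bigsqcup_{e\in vE^1}\rang{e}$, giving $P_v=\sum_{e\in vE^1}S_eS_e^*$. The universal property of $C^*(E,R)$ then provides the unique $*$-homomorphism $\rho$ with the stated effect on generators.

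For surjectivity, the crossed product is the closed linear span of elements $\iota(f)\delta_g$ with $f\in D_g$, and by Lemma~\ref{lem:alternative-U-phi} only $g\in\{\31,u,u\inv,u(u')\inv\}$ contribute. By an induction on word length using semi-saturation of $\Phi$, the product $S_u:=S_{e_1}\dotsm S_{e_n}$ along $u=e_1\dotsm e_n$ equals $\iota(\cha{\rang{u}})\delta_u$, and correspondingly $S_u^*=\iota(\cha{\domain{u}})\delta_{u\inv}$. This gives $\iota(\cha{\cyl{u}\cap\partial_R E})=S_uS_u^*\in\rho(C^*(E,R))$. The basic compact open sets $\cylf{u}\cap\partial_R E$ from Proposition~\ref{prop:et}\eqref{item:18} are differences of such cylinders, hence their characteristic functions and therefore all of $\iota(C_0(\partial_R E))$ lie in $\rho(C^*(E,R))$. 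For $f\in C_0(\rang{g})$ one has $\iota(f)=\iota(f)\iota(\cha{\rang{g}})$, so $\iota(f)\delta_u=\iota(f)S_u$, $\iota(f)\delta_{u\inv}=\iota(f)S_u^*$ and $\iota(f)\delta_{u(u')\inv}=\iota(f)S_uS_{u'}^*$ all lie in the image.

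For injectivity I would appeal to the gauge-invariant uniqueness theorem for $C^*(E,R)$: it suffices to produce a circle action on $C_0(\partial_R E)\rtimes_\Phi\free$ with respect to which $\rho$ is equivariant and to verify that $\rho(p_v-\sum_{e\in F}s_es_e^*)\ne 0$ for every $v\in E^0\setminus R$ and every finite $F\subseteq vE^1$ (the case $F=\emptyset$ covering $\rho(p_v)\ne 0$). The circle action is $\beta$, and checking $\rho\circ\gamma_z=\beta_z\circ\rho$ on the generators $p_v$ and $s_e$ is immediate (both sides act as $P_v$ and as $zS_e$ respectively), which simultaneously establishes~(b). The nonvanishing hypothesis reduces to $\cyl{v}\cap\partial_R E\setminus\bigsqcup_{e\in F}\rang{e}\ne\emptyset$, which holds because $v\in\partial_R E$ and $v\in\cyl{v}$ (as $|v|=0$), while $v\notin\rang{e}$ for any edge $e$ (since $|e|=1>0=|v|$).

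The main obstacle I expect is the careful bookkeeping with \eqref{p1}--\eqref{p2} to establish $S_u=\iota(\cha{\rang{u}})\delta_u$ for $u\in E^*$ and the dual identity for $S_u^*$, together with the computations $\delta_g\delta_{g\inv}\iota(f)=\iota(f)$ for $f\in C_0(\rang{g})$ that make products of the form $\iota(f)\delta_g$ behave as expected; these are what reduce the verification of the Cuntz-Krieger relations and of surjectivity to manageable identities. A secondary point of care is invoking the correct form of the gauge-invariant uniqueness theorem for relative graph $C^*$-algebras rather than the simpler version for $C^*(E)$.
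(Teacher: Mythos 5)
Your overall strategy coincides with the paper's: construct $\rho$ from the universal property by checking that $\bigl(\iota(\cha{\cyl{v}\cap\partial_R E}),\delta_e\bigr)$ is a Cuntz--Krieger $(E,R)$-family, obtain surjectivity by showing that the generators of the crossed product lie in the image (this is exactly the content of Lemma~\ref{lem:span-partialcp}), and deduce injectivity from the gauge-invariant uniqueness theorem for relative graph algebras, namely \cite[Theorem 3.11]{MT}, with part~(b) following from the check of equivariance on generators. Those steps are all carried out correctly, and your computation $\cyl{v}\cap\partial_R E=\bigsqcup_{e\in vE^1}\rang{e}$ for $v\in R$ is the same observation the paper records in \eqref{eq:Zv-inclusion}.

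There is, however, one genuine gap in the injectivity step: the uniqueness theorem requires $\rho(p_v)\ne 0$ for \emph{every} $v\in E^0$, not only for $v\in E^0\setminus R$, and this hypothesis cannot be dropped. (With one vertex, one loop and $R=E^0$, your list of conditions is vacuous, yet the zero representation is gauge-equivariant and not injective; more to the point, $p_v$ is a nonzero gauge-invariant element, so $\rho(p_v)\ne 0$ is necessary for injectivity.) Your nonvanishing argument --- that $v$ itself lies in $\cyl{v}\cap\partial_R E\setminus\bigcup_{e\in F}\rang{e}$ --- rests on $v\in\partial_R E$, which holds precisely when $v\notin R$, so it does not cover the vertices of $R$. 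The missing verification is short but must be supplied: for $v\in R\subseteq E^0_\reg$ one has $vE^1\ne\emptyset$, and by repeatedly choosing edges one either reaches, after finitely many steps, a vertex outside $R$ (giving a finite path in $\cyl{v}\cap\partial_R E$) or produces an infinite path in $\cyl{v}$; in either case $\cyl{v}\cap\partial_R E\ne\emptyset$, whence $\rho(p_v)=\iota(\cha{\cyl{v}\cap\partial_R E})\ne 0$ by injectivity of $\iota$. With this added, your proof is complete and follows essentially the same route as the paper's.
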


To prove Theorem \ref{theorem:partial}, we need the following lemma.

\begin{lemma}\label{lem:span-partialcp}
Each $\delta_e$, $e\in E^1$, belongs to $C_0(\partial_R E)\rtimes_{\Phi}\free$, and
$C_0(\partial_R E)\rtimes_{\Phi}\free$ is generated by the union $\{\iota(\cha{\cyl{v}\cap\partial_RE})\mid v\in E^0\}\cup\{\delta_e\mid e\in E^1\}$.
\end{lemma}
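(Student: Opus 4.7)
The plan is to argue in two stages: first establish that each $\delta_e$ belongs to the crossed product, and then show that together with the listed characteristic functions they generate it as a $C^*$-algebra.

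For the first stage, the key point is that $\rang{e}=\cyl{e}\cap\partial_R E$ is compact open (Proposition~\ref{prop:et}\eqref{item:18} together with Corollary~\ref{cor:rps}), so $\cha{\rang{e}}$ is a multiplicative unit for the ideal $D_e=C_0(\rang{e})$. Applying the covariance relation \eqref{p1} to the universal covariant pair $(\iota,\delta)$, the projection $\delta_e\delta_e^*$ equals the projection onto $\spac\iota(D_e)H$, and this projection coincides with $\iota(\cha{\rang{e}})$ precisely because $\cha{\rang{e}}$ is a unit in $D_e$. The partial-representation identity $\delta_e\delta_e^*\delta_e=\delta_e$ then gives $\delta_e=\iota(\cha{\rang{e}})\delta_e$, which is manifestly of the form $\iota(a)\delta_g$ with $a\in D_g$, and hence lies in $C_0(\partial_R E)\rtimes_\Phi\free$.

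For the second stage, let $C$ denote the $C^*$-subalgebra generated by the stated union. I would first show that $\delta_g\in C$ for every $g\in\free$ with non-empty support. By Lemma~\ref{lem:alternative-U-phi}\eqref{item:g4} such a $g$ is either a path $u$, its inverse, or of the form $u(u')^{-1}$ with $r(u)=r(u')$ and distinct last letters. In each case the reduced form of $g$ in $\free$ is precisely the concatenation of its edge letters, so semi-saturation of $\Phi$ (Proposition~\ref{prop:action-on-path-space}\eqref{item:g8}) transfers to the partial representation $\delta$ and expresses $\delta_g$ as a product of $\delta_e$'s and $\delta_e^*$'s, forcing $\delta_g\in C$. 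The first-stage argument, applied now to an arbitrary path $u=e_1\cdots e_n\in E^*\setminus E^0$ in place of a single edge, yields $\iota(\cha{\cyl{u}\cap\partial_R E})=\iota(\cha{\rang{u}})=\delta_u\delta_u^*\in C$, while for $u\in E^0$ this characteristic function is a generator. Since $C$ is closed under products and differences of the resulting commuting projections, it contains $\iota(\cha{\cylf{u}\cap\partial_R E})$ for every $u\in E^*$ and $F\in\mathcal{F}(E^*)$, and by Proposition~\ref{prop:et}\eqref{item:18} these sets form a basis of compact open subsets of $\partial_R E$, giving $\iota(C_0(\partial_R E))\subseteq C$.

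Combining the two inclusions, $\iota(a)\delta_g\in C$ for every $g\in\free$ and every $a\in D_g$, and by the universal description of the crossed product recalled from \cite{MR1452280} these elements have dense linear span, so $C=C_0(\partial_R E)\rtimes_\Phi\free$. The principal obstacle is the identification $\delta_e\delta_e^*=\iota(\cha{\rang{e}})$ in the first stage: it is the only place where the compact-openness of $\rang{e}$ enters, feeding into the otherwise abstract universal-covariance relation \eqref{p1}. Once this identification is in place, the remaining work is a mix of semi-saturation (to collect all the $\delta_g$'s) and the basis description of the topology on $\partial_R E$ (to recover $C_0(\partial_R E)$), both of which are routine given what has already been established in Section~\ref{sec:part-action-on-graph}.
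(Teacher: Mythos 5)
Your proposal is correct and follows essentially the same route as the paper: both first obtain $\delta_e=\delta_e\delta_e^*\delta_e=\iota(\cha{\rang{e}})\delta_e$ from the covariance relation \eqref{p1}, then recover all $\delta_g$ from the edge generators (you via semi-saturation, the paper via multiplicativity of $\Phi$) and recover $\iota(C_0(\partial_R E))$ from the projections $\delta_u\delta_u^*$ together with the vertex projections, concluding by density of the span of the $\iota(a)\delta_g$. The only differences are cosmetic (you use the basis $\cylf{u}$ where the paper invokes Stone--Weierstrass on the cylinder sets $\cyl{u}$).
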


\begin{proof}
	Notice first that \eqref{p1} gives that
	\begin{equation}\label{eq:rangeproj}
	  \delta_g\delta_g^*=\iota(\cha{\trang{g}})
	  \end{equation}
	  for $g\in\free\setminus\{\31\}$. Thus if $e\in E^1$, then $\delta_e=\delta_e\delta_e^*\delta_e=\iota(\cha{\trang{e}})\delta_e\in C_0(\partial_R E)\rtimes_{\Phi}\free$.
	
  Since $\free$ is
  generated by $E^1$ and $\Phi$ is multiplicative
  (cf. \cite[Section 5]{MR1452280}), it follows that $\{\delta_g\mid
  g\in\free\setminus\{\31\}\}$ is contained in the
  $\cs$-algebra generated by $\{\delta_e\mid e\in E^1\}$. By
  the Stone-Weierstrass Theorem and Proposition \ref{prop:et}, the $C^*$-algebra
   $C_0(\partial_RE)$ is generated
  by $\{\cha{\cyl{u}\cap\partial_RE}\mid u\in E^*\}$, and since
  $\iota(\cha{\cyl{u}\cap\partial_RE})=\delta_u\delta_u^*$ for $u\in
  E^*\setminus E^0$, we get that $\iota(C_0(\partial_RE))$ is contained
  in the $\cs$-algebra generated by
  $\{\iota(\cha{\cyl{v}\cap\partial_RE})\mid v\in E^0\}\cup\{\delta_e\mid e\in E^1\}$.
	It follows that $C_0(\partial_R E)\rtimes_{\Phi}\free$ is generated by $\{\iota(\cha{\cyl{v}\cap\partial_RE})\mid v\in E^0\}\cup\{\delta_e\mid e\in E^1\}$.
\end{proof}

\begin{proof}[Proof of Theorem \ref{theorem:partial}](a):  By \eqref{eq:rangeproj} we have  that $\delta_e\delta_e^*=\iota(\cha{\rang{e}})=\iota(\cha{\cyl{e}\cap\partial_R E})$ and $\delta_e^*\delta_e=\iota(\cha{\domain{e}})=\iota(\cha{\cyl{r(e)}\cap \partial_R E})$ for every $e\in E^1$. Note that
\begin{equation}\label{eq:Zv-inclusion}
\bigcup_{e\in vE^1}\cyl{e}\cap\partial_R E \subseteq \cyl{v}\cap\partial_R E
\end{equation}
for all $v\in E^0_\reg$, where equality holds only when $v\in R$ (if $v\notin R$, then $v$ belongs to the right but not to the left-hand side because our convention is that $vE^1\subseteq E^1$).
It follows that the union $\{\iota(\cha{\cyl{v}\cap\partial_R E})\}_{v\in E^0}\cup\{\delta_e\}_{e\in E^1}$ is a Cuntz-Krieger $(E,R)$-family. Thus there exists a $*$-homomorphism $\rho$ from $\cs(E,R)$ to $\cros$ which for every $e\in E^1$ maps $s_e$ to $\delta_e$ and for every $v\in E^0$ maps $p_v$ to $\iota(\cha{\cyl{v}\cap\partial_R E})$. That this $*$-homomorphism is unique, follows from the fact that $C^*(E,R)$ is generated by $\{p_v\mid v\in E^0\}\cup\{s_e\mid e\in E^1\}$.

	According to\footnote{Notice that there is an obvious misprint in 2. of \cite[Theorem 3.11]{MT}} Theorem 3.11 of \cite{MT}, $\rho$ is injective if $\rho(p_v)\ne 0$ for every $v\in E^0$, $\rho(p_v-\sum_{e\in vE^1}s_es_e^*)\ne 0$ for $v\in E^0_\reg\setminus R$, and there exists an action $\beta:\T\to\aut(\cros)$ such that $\rho\circ\gamma_z=\beta_z\circ\rho$ for all $z\in\T$. Since $\iota$ is injective, it follows that $\rho(p_v)=\iota(\cha{\cyl{v}\cap\partial_RE})\ne 0$ for every $v\in E^0$, and that
\begin{align*}
\rho(\sum_{e\in vE^1}s_es_e^*)&=\sum_{e\in vE^1}\delta_e\delta_e^*=\sum_{e\in vE^1}\iota(\cha{\cyl{e}\cap\partial_RE})\\
&<\iota(\cha{\cyl{v}\cap\partial_RE})=\rho(p_v)
\end{align*}
for every $v\in E^0_\reg\setminus R$, where the inequality sign is from \eqref{eq:Zv-inclusion}. Thus the injectivity of $\rho$ will follow once we have proved part (b). That $\rho$ is surjective follows directly from Lemma \ref{lem:span-partialcp}.

Towards proving (b), let $z\in\T$. Since $C^*(E,R)$ is generated by $\{p_v\mid v\in E^0\}\cup\{s_e\mid e\in E^1\}$, $\rho(\gamma_z(p_v))=\rho(p_v)=\beta_z(\rho(p_v))$ for every $v\in E^0$, and $\rho(\gamma_z(s_e))=z\rho(s_e)=\beta_z(\rho(s_e))$ for every $e\in E^1$, it follows that $\rho\circ\gamma_z=\beta_z\circ\rho$, as wanted.
\end{proof}

\begin{remark}
A different proof of the claim that $\rho$ in Theorem~\ref{theorem:partial} is an isomorphism can be provided by directly constructing the inverse: this will be given by a covariant pair for $(C_0(\partial_R E), \free, \Phi)$  obtained from a partial representation of $\free$ inside $C^*(E, R)$. The details are similar to
the proof of \cite[Proposition 4.1]{MR1703078}.
\end{remark}

In the following we will write $s_u$ for $s_{e_1}s_{e_2}\dotsm s_{e_n}$ when $u=e_1e_2\dotsm e_n\in E^*\setminus E^0$, and $s_v$ for $p_v$ when $v\in E^0$.

\begin{corollary} \label{coro:inclusion}
  Let $E$ be a directed graph and $R$ a subset of $E^0_\reg$. Then:
	\begin{enumerate}\renewcommand{\theenumi}{\roman{enumi}}
		\item \label{item:c1} There exists a unique $*$-isomorphism $\tilde{\iota}$ from $C_0(\partial_R E)$ onto the $C^*$-subalgebra of $C^*(E,R)$ generated by $\{s_us_u^*\mid u\in E^*\}$ mapping $\cha{\cyl{u}\cap\partial_R E}$ to $s_us_u^*$ for every $u\in E^*$.
	\item \label{item:c2} There exists a unique norm-decreasing linear map (conditional expectation) $F$ from $C^*(E,R)$ onto the $C^*$-subalgebra of $C^*(E,R)$ generated by $\{s_us_u^*\mid u\in E^*\}$ such that
	 \begin{equation*}
    F(s_us_{u'}^*)=
    \begin{cases}
      s_us_{u'}^*
      &\text{if }u=u',\\
      0&\text{if }u\ne u'
    \end{cases}
  \end{equation*}
for $u,u'\in E^*$.
	\end{enumerate}
\end{corollary}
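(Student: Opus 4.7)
Both parts will be deduced by pulling back the relevant structure from the partial crossed product $C_0(\partial_R E)\rtimes_\Phi\free$ to $C^*(E,R)$ via the isomorphism $\rho$ of Theorem~\ref{theorem:partial}.

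For part (a), the natural candidate is $\tilde\iota:=\rho^{-1}\circ\iota$, which is automatically an injective $*$-homomorphism since $\iota$ embeds $C_0(\partial_R E)$ faithfully into the crossed product. The central computation is that $\tilde\iota(\cha{\cyl{u}\cap\partial_R E}) = s_u s_u^*$ for every $u\in E^*$: for $u=v\in E^0$ this is built into the definition of $\rho$, while for $u\in E^*\setminus E^0$ one combines \eqref{eq:rangeproj} with Lemma~\ref{lem:alternative-U-phi}(ii) to get $\iota(\cha{\cyl{u}\cap\partial_R E}) = \iota(\cha{\trang{u}}) = \delta_u\delta_u^*$, whose $\rho$-preimage is $s_u s_u^*$. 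Since $\{\cha{\cyl{u}\cap\partial_R E}\mid u\in E^*\}$ generates $C_0(\partial_R E)$ by Proposition~\ref{prop:et}(iii) combined with the Stone--Weierstrass theorem, the image of $\tilde\iota$ coincides with the $C^*$-subalgebra of $C^*(E,R)$ generated by $\{s_u s_u^*\mid u\in E^*\}$, and uniqueness of $\tilde\iota$ follows from the same generating set.

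For part (b), the plan is to invoke the canonical norm-one conditional expectation $E_\Phi:C_0(\partial_R E)\rtimes_\Phi\free\to\iota(C_0(\partial_R E))$ coming from the $\free$-grading of the crossed product. Its existence is guaranteed by the semi-saturated orthogonal hypothesis verified in Proposition~\ref{prop:action-on-path-space}(iv)---for such partial actions of free groups the full and reduced partial crossed products coincide, and the latter always carries such an expectation via its Fell bundle structure---and this is the main technical point, to be cited from the Exel--Laca framework of \cite{MR1953065}. Setting $F:=\rho^{-1}\circ E_\Phi\circ\rho$ then yields a norm-one (hence norm-decreasing) conditional expectation onto $\tilde\iota(C_0(\partial_R E))$. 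To verify the formula, observe that $\rho(s_u s_{u'}^*)=\delta_u\delta_{u'}^*$ lies in the grade-$u(u')^{-1}$ fibre: when $u=u'$ this element equals $\iota(\cha{\trang{u}})\in\iota(C_0(\partial_R E))$ and is fixed by $E_\Phi$; when $u\neq u'$, cancelling the longest common suffix of $u$ and $u'$ shows that the reduced form of $u(u')^{-1}$ in $\free$ is nontrivial (either $u$ is a proper suffix of $u'$, $u'$ is a proper suffix of $u$, or the remaining tails terminate in distinct edges), so $E_\Phi$ annihilates the element. The uniqueness of $F$ is then automatic, since its values on the dense $*$-subalgebra $\spa\{s_u s_{u'}^*\mid u,u'\in E^*\}$ determine the whole map.
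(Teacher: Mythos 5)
Your proposal is correct and follows essentially the same route as the paper: part (i) is obtained as $\rho^{-1}\circ\iota$ with uniqueness from Stone--Weierstrass, and part (ii) by transporting the canonical conditional expectation of the Exel--Laca partial crossed product (the paper simply cites \cite[Proposition 2.3]{MR1953065} together with Theorem~\ref{theorem:partial}) through $\rho$, with uniqueness from the density of $\spa\{s_us_{u'}^*\}$. Your extra verification that the grading argument yields the stated formula on the spanning elements is left implicit in the paper but is a correct and welcome addition.
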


\begin{proof}
	\eqref{item:c1}: The map $\rho\inv\circ\iota$ is a $*$-isomorphism from $C_0(\partial_RE)$ onto the $C^*$-subalgebra of $C^*(E,R)$ generated by $\{s_us_u^*\mid u\in E^*\}$ mapping $\cha{\cyl{u}\cap\partial_R E}$ to $s_us_u^*$ for every $u\in E^*$. That this is the only $*$-isomorphism with this property follows from the fact that $C_0(\partial_RE)$, according to the Stone-Weierstrass Theorem, is generated by $\{\cha{\cyl{u}\cap\partial_RE}\mid u\in E^*\}$.
	
	\eqref{item:c2}: The existence of $F$ follows from Theorem \ref{theorem:partial} and \cite[Proposition 2.3]{MR1953065}. Uniqueness follows from the fact that $C^*(E,R)=\spac\{s_us_{u'}^*\mid u,u'\in E^*\}$.
\end{proof}

\section{KMS states on $C^*(E,R)$}
\label{sec:kms-states-graph}

We will in this section describe the sets of KMS states of certain one-parameter groups of automorphisms of $C^*(E,R)$ in terms of states of $C_0(\partial_R E)$, in terms of regular Borel probability measures on $\partial_R E$, and in terms of functions from $E^0$ to $[0,1]$.

We start by recalling the notions of KMS$_\beta$ states and ground states.  For the first one, a standard definition
is found in \cite{MR1441540} and \cite{Ped}. However, an  equivalent formulation has in recent times prevailed: given a
$C^*$-algebra $A$ and a homomorphism (a dynamics) $\sigma:\R\to \aut(A)$, an element $a\in A$
is called \emph{analytic} provided that $t\mapsto \sigma_t(a)$ extends to an entire function on $\C$. The analytic elements form a
dense subset of $A$, see \cite[\S 8.12]{Ped}. For $\beta\in (0, \infty)$, a \emph{KMS$_\beta$-state} of $(A,\sigma)$ is a state $\psi$ of $A$ which satisfies the KMS$_\beta$ condition
\begin{equation}\label{def:KMS-beta}
\psi(ab)=\psi(b\sigma_{i\beta}(a))
\end{equation}
for all $a,b$ analytic in $A$. It is known that it suffices to have \eqref{def:KMS-beta} satisfied for a
subset of analytic elements of $A$ that spans a dense subalgebra of $A$, \cite[Proposition 8.12.3]{MR1441540}. A \emph{KMS$_0$-state} of $(A,\sigma)$ is a state $\psi$ of $A$ which is invariant with respect to $\sigma$ (i.e., $\psi(\sigma_t(a))=\psi(a)$ for $t\in\R$ and $a\in A$), and which satisfies the trace condition $\psi(ab)=\psi(ba)$ for all $a,b\in A$.
A state $\psi$ on $A$ is a \emph{ground state} of $(A, \sigma)$ if for every $a,b$ analytic in $A$, the entire function $z\mapsto \psi(a\sigma_z(b))$ is bounded on the upper-half plane. Again,  it is known that it suffices to have boundedness for a set of elements that spans a dense subalgebra of the analytic elements.

We will now describe the set of KMS states for certain one-parameter groups of automorphisms of $C^*(E,R)$.

 Suppose $N$ is a function $N:E^1\to (1,\infty)$, and let $\sigma$ be the  unique strongly continuous one-parameter group of automorphisms of $\cros$ given by \eqref{eq:sigma-from-N}. Theorem \ref{theorem:partial} therefore implies that $N$ gives rise to a unique strongly continuous one-parameter group $\sigma$ of automorphisms of $C^*(E,R)$ such that
\begin{equation*}
  \sigma_t(s_e)=\bigl(N(e)\bigr)^{it}s_e \text{ and }\sigma_t(p_v)=p_v
\end{equation*}
for all $e\in E^1$ and $v\in E^0$.

Before we state the result, we introduce some notation. For $0\leq\beta<\infty$ we define the following sets:
\begin{itemize}
		\item[$A^\beta$:] the set of KMS$_\beta$ states for $(C^*(E,R),\sigma)$,
		\item[$B^\beta$:] the set of states $\omega$ of $C_0(\partial_RE)$ that satisfy the scaling condition $\omega(f\circ\phi_e\inv)=\bigl(N(e)\bigr)^{-\beta}\omega(f)$ for every $e\in E^1$ and every $f\in C_0(\tdomain{e})$,
	  \item[$C^\beta$:] the set of regular Borel probability measures $\mu$ on $\partial_R E$ that satisfy the scaling condition $\mu(\phi_e(A))=N(e)^{-\beta}\mu(A)$ for every $e\in E^1$ and every Borel measurable subset $A$ of $\tdomain{e}$, and
	  \item[$D^\beta$:] the set of functions $m:E^0\to [0,1]$ such that
	    \begin{enumerate}\renewcommand{\theenumi}{m\arabic{enumi}}
	    	\item\label{item:m1} $\sum_{v\in E^0}m(v)=1$;
		    \item\label{item:m2} $m(v)=\sum_{e\in vE^1}\bigl(N(e)\bigr)^{-\beta}m\bigl(r(e)\bigr)$ if $v\in R$;
		    \item\label{item:m3} $m(v)\ge\sum_{e\in vE^1}\bigl(N(e)\bigr)^{-\beta}m\bigl(r(e)\bigr)$ for $v\in E^0$.
	    \end{enumerate}
	\end{itemize}
Note that \eqref{item:m1} is equivalent to $\sup\{\sum_{v\in F}m(v)\mid F\text{ is a finite subset of }E^0\}=1$ and \eqref{item:m3} to the assertion that $m(v)\ge\sum_{e\in F}\bigl(N(e)\bigr)^{-\beta}m\bigl(r(e)\bigr)$ for every finite subset $F$ of $vE^1$. Notice also that if $R=E^0$, then $D^\beta$ is the set of positive normalized eigenvectors with eigenvalue 1 of the matrix $(\sum_{e\in v'E^1v}N(e))_{v',v\in E^0}$ (where $v'E^1v=\{e\in E^1\mid s(e)=v',\ r(e)=v\}$). In particular, if $R=E^0$ and $N(e)=\exp(1)$ for all $e\in E^1$, then $D^\beta$ is the set of positive normalized eigenvectors with eigenvalue $\exp(\beta)$ of the adjacency matrix of $E$.

Further, we let $A^{\operatorname{gr}}$ be the set of ground states for $(C^*(E,R),\sigma)$, $B^{\operatorname{gr}}$ the set of states $\omega$ of $C_0(\partial_R E)$ such that $\omega\bigl(\cha{\rang{e}}\bigr)=0$ for every $e\in E^1$, $C_3^{\operatorname{gr}}$ the set of regular Borel probability measures $\mu$ on $\partial_R E$ that satisfy that $\mu(A)=0$ for every $e\in E^1$ and every Borel measurable subset $A$ of $\rang{e}$, and finally $D^{\operatorname{gr}}$ the set of functions $m:E^0\to [0,1]$ that satisfy
    \begin{enumerate}
    \item $\sum_{v\in E^0}m(v)=1$,
    \item $m(v)=0$ for $v\in R$.
    \end{enumerate}

\begin{theorem} \label{thm:kms}
  Given a directed graph $E$, a subset $R$ of $E^0_\reg$ and a function $N:E^1\to (1,\infty)$, let $\sigma$ be the strongly continuous one-parameter group
   of automorphisms of $C^*(E,R)$ such that
  \begin{equation*}
	  \sigma_t(s_e)=\bigl(N(e)\bigr)^{it}s_e \text{ and }\sigma_t(p_v)=p_v
	\end{equation*}
  for all $e\in E^1$ and $v\in E^0$. Further, let $\phi_g:\domain{g}\to\rang{g},\ g\in\free$ be the partial action of the free group  $\free$ generated by $E^1$ from Proposition~\ref{prop:action-on-path-space}.

  Then for  $\beta\in [0,\infty)$, $A^\beta$, $B^\beta$, $C^\beta$ and $D^\beta$ are isomorphic as convex sets. Likewise,
  $A^{\operatorname{gr}}$, $B^{\operatorname{gr}}$, $C^{\operatorname{gr}}$ and $D^{\operatorname{gr}}$ are isomorphic as convex sets.
\end{theorem}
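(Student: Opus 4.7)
The approach is to construct affine bijections $A^\beta \to B^\beta \to C^\beta \to D^\beta$ (and the analogous chain for ground states), so that the conclusion follows by composition. First, for $A^\beta \cong B^\beta$, I would transport the problem to the crossed product picture via the $*$-isomorphism $\rho$ of Theorem~\ref{theorem:partial}, which intertwines the dynamics $\sigma$ on $C^*(E,R)$ with the dynamics on $C_0(\partial_R E) \rtimes_\Phi \free$ determined by \eqref{eq:sigma-from-N}. Since $\Phi$ is semi-saturated and orthogonal by Proposition~\ref{prop:action-on-path-space}\eqref{item:g8}, the Exel--Laca classification \cite[Theorem 4.3]{MR1953065} applies directly and identifies KMS$_\beta$ states of the crossed product with states $\omega$ of $C_0(\partial_R E)$ satisfying $\omega\circ\phi_e = N(e)^{-\beta}\omega$ on $D_{e^{-1}}$, the correspondence being $\omega = \psi \circ \iota$; this is precisely $B^\beta$.

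For $B^\beta \cong C^\beta$ one applies the Riesz--Markov theorem to $C_0(\partial_R E)$, which is locally compact Hausdorff by Corollary~\ref{cor:rps}; evaluating on characteristic functions of compact open subsets of $U(e^{-1})$ translates the scaling condition on $\omega$ into the scaling condition on $\mu$ through the homeomorphism $\phi_e$. For $C^\beta \to D^\beta$, set $m(v) = \mu(Z(v) \cap \partial_R E)$. Condition \eqref{item:m1} follows from $\{Z(v) \cap \partial_R E\}_{v \in E^0}$ partitioning $\partial_R E$, while for each $v \in E^0_\reg$ the disjoint decomposition
\[
  Z(v) \cap \partial_R E \;=\; \bigl(\{v\} \cap \partial_R E\bigr) \;\sqcup\; \bigsqcup_{e \in vE^1} \phi_e\bigl(Z(r(e)) \cap \partial_R E\bigr)
\]
(with the singleton empty exactly when $v \in R$), combined with the scaling relation, yields \eqref{item:m2} for $v \in R$ and \eqref{item:m3} in general.

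Conversely, given $m \in D^\beta$ the candidate measure on basic cylinders is $\mu(Z(u) \cap \partial_R E) = \bigl(\prod_{i=1}^{|u|} N(e_i)^{-\beta}\bigr) m(r(u))$, extended by inclusion--exclusion to the compact open sets $Z_F(u) \cap \partial_R E$ of Proposition~\ref{prop:et}\eqref{item:18} to obtain a finitely additive set function on the algebra they generate, and finally extended by Carath\'eodory to the Borel $\sigma$-algebra. The main obstacle is checking the well-definedness, nonnegativity and countable additivity of this premeasure: nonnegativity is precisely \eqref{item:m3}, consistency across different presentations of the same set uses \eqref{item:m2} at $R$-vertices, normalization is \eqref{item:m1}, and countable additivity on the generating algebra is automatic since every member of it is compact and open, so any countable disjoint cover within the algebra has only finitely many nonempty terms. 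That the maps in each step are affine, hence yield isomorphisms of convex sets, is visible from the formulas.

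Finally, the ground state chain $A^{\operatorname{gr}} \cong B^{\operatorname{gr}} \cong C^{\operatorname{gr}} \cong D^{\operatorname{gr}}$ runs along the same lines: the ground state statement of \cite[Theorem 4.3]{MR1953065} provides the first equivalence, Riesz--Markov the second, and the vanishing condition $\mu(U(e)) = 0$ for every $e \in E^1$ forces any $\mu \in C^{\operatorname{gr}}$ to be supported on $\partial_R E \setminus \bigcup_{e \in E^1} U(e)$, which consists of the length-zero boundary paths and hence equals the set $E^0 \setminus R$ of vertices in $\partial_R E$; the final bijection is then $m(v) = \mu(\{v\})$ for $v \in E^0 \setminus R$ and $m(v) = 0$ for $v \in R$.
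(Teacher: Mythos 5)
Your strategy coincides with the paper's: Theorem~\ref{thm:kms} is proved there by composing the convex isomorphisms of Propositions~\ref{prop:state} ($B^\beta\cong A^\beta$, via Exel--Laca), \ref{prop:int} ($C^\beta\cong B^\beta$, via Riesz representation) and \ref{prop:function} ($B^\beta\cong D^\beta$, via evaluation on vertex cylinders). There is, however, one genuine gap in your first link: \cite[Theorem 4.3]{MR1953065} gives the correspondence between KMS$_\beta$ states of the crossed product and scaling states of the coefficient algebra only for $\beta>0$ (and for ground states), while the statement you are proving includes $\beta=0$. A KMS$_0$ state is by definition a $\sigma$-invariant \emph{tracial} state, and its identification with an element of $B^0$ is not covered by a ``direct application'' of the Exel--Laca theorem. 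The paper handles this case by hand in the proof of Proposition~\ref{prop:state}: it verifies that for $\omega\in B^0$ the state $\omega\circ\tilde\iota\inv\circ F$ is a trace by computing $\psi(s_{u_1}s_{u_2}^*s_{u_3}s_{u_4}^*)$ in all cases, and conversely that $\sigma$-invariance together with the trace property of $\psi\in A^0$ forces $\psi(s_us_{u'}^*)=0$ for $u\ne u'$, whence $\omega=\psi\circ\tilde\iota$ satisfies the $\beta=0$ scaling condition. You need to supply this argument.

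The remaining links are sound. Your treatment of surjectivity onto $D^\beta$ differs from the paper's in an interesting way: you build a finitely additive premeasure on the algebra generated by the compact open sets $\cylf{u}\cap\partial_R E$ and extend by Carath\'eodory, getting countable additivity essentially for free from compactness, whereas Lemma~\ref{lemma:hovedto} instead defines a linear functional on $\spa\{\cha{\cyl{u}}\mid u\in E^*\}$, proves it has norm at most $1$ by a rearrangement and H\"older's inequality, and then checks that it annihilates the functions vanishing on $\partial_R E$. Both routes work; if you take yours, make explicit that nonnegativity of the premeasure on a general $\cylf{u}$ is not literally \eqref{item:m3} but follows by iterating \eqref{item:m3} (and \eqref{item:m2} where relevant) along the finitely many paths interpolating between $u$ and the minimal elements of $F$ lying above $u$. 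One further small point: in the ground state case, concluding that $\mu$ is supported on $E^0\setminus R$ from $\mu(\rang{e})=0$ for all $e\in E^1$ requires inner regularity of $\mu$ when $E^1$ is uncountable, since $\bigcup_{e\in E^1}\rang{e}$ need not then be a countable union.
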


Theorem \ref{thm:kms} will follow from Propositions \ref{prop:state}, \ref{prop:int} and \ref{prop:function} below. We point out that these propositions give explicit isomorphisms.

\begin{remark}
If the graph $E$ is finite, then Propositions \ref{prop:state} and \ref{prop:function} recover \cite[Proposition 2.1(a),(b),(c)]{aHLRS} when $R=\emptyset$ and \cite[Proposition 2.1(d)]{aHLRS} when $R=E^0_\reg$.
\end{remark}

\begin{remark}
In \cite{Cas-Mor}, KMS states on graph $C^*$-algebras are studied. The one-parameter group of automorphisms considered in \cite{Cas-Mor} is of the same form as the one-parameter group of automorphisms considered here, but in \cite{Cas-Mor} it is not required that $N(e)>1$ for all $e\in E^1$, only that there exists a $c>0$ such that $N(e)>c$ for all $e\in E^1$, and that $N(e_1)\cdots N(e_n)\ne 1$ for all $e_1\cdots e_n\in E^n$, $n\ge 1$. For $R=E^0_\reg$ and $\beta>0$, \cite[Theorem 3.3]{Cas-Mor} generalizes the results about $A^\beta$ and $B^\beta$ given in Proposition \ref{prop:int}, and \cite[Theorem 3.10]{Cas-Mor} generalizes the results about $B^\beta$ and $D^\beta$ given in Proposition \ref{prop:function}. For ground states, Proposition \ref{prop:int} recovers \cite[Proposition 4.3]{Cas-Mor} and Proposition \ref{prop:function} recovers \cite[Theorem 4.4]{Cas-Mor} when $R=E^0_\reg$.

We believe that with some effort, the results about $A^\beta$, $B^\beta$, $C^\beta$, and $D^\beta$ given in Theorem \ref{thm:kms}, Proposition \ref{prop:state}, Proposition \ref{prop:int}, and Proposition \ref{prop:function} could be generalized to the case where the requirement $N(e)>1$ for all $e\in E^1$ is replaced with the assumption that $N(e_1)\dots N(e_n)\ne 1$ for all $e_1\dots e_n\in E^n$, $n\ge 1$ (cf. the remark after the proof of Lemma 3.2 in \cite{MR1953065} and Remark \ref{rmk:groupoid}).
\end{remark}

\begin{remark}\label{rmk:groupoid}
Our characterization of KMS$_\beta$ states in Theorem~\ref{thm:kms} can be seen in relation to the general result for groupoid algebras in \cite{Nesh} because the $C^*$-algebra $C^*(E, R)$ admits a realization as a groupoid $C^*$-algebra $C^*(\mathcal{G}_{(E,R)})$ where $\mathcal{G}:=\mathcal{G}_{(E,R)}=\{(ux,\vert u\vert-\vert u'\vert,u'x)\mid u,u'\in E^*,x\in \partial_R E, r(u)=s(x)=r(u')\}$, see for example \cite{pat} and \cite[\S 6.4]{aHLRS}.

Assume $E$ is countable and let $N:E^1\to (1,\infty)$ be a function such that $N(e_1)\cdots N(e_n)\ne 1$ for all $e_1\cdots e_n\in E^n$, $n\ge 1$. Extend the function $N$ to $E^*$ by letting $N(v)=1$ for $v\in E^0$ and by letting $N(u)=N(u_1)\dotsm N(u_n)$ for $u=u_1\dotsm u_n\in E^n$ and $n\geq 1$. Define $c:\mathcal{G}\to \R$ by $c((ux,\vert u\vert-\vert u'\vert,u'x))=\ln N(u)-\ln N(u')$. Then $c$ is a continuous one-cocycle. For $x\in\partial_RE$ let $\mathcal{G}_x^x$ be the stabilizer $\{(x,n,x)\in\mathcal{G}\mid n\in\Z\}$. Then $\mathcal{G}_x^x$ is a subgroup of $\Z$. Let $(u_g)_{g\in\mathcal{G}_x^x}$ be the generators of the $C^*$-algebra $C^*(\mathcal{G}_x^x)$ of $\mathcal{G}_x^x$. Define the dynamics $\sigma^c$ on $\mathcal{G}$ by $\sigma^c_t(f)(g)=e^{itc(g)}f(g)$ for  $f\in C_c(\mathcal{G})$ and $g\in \mathcal{G}$. Then \cite[Theorem 1.3]{Nesh} provides, for all $\beta\in \R$, a one-to-one correspondence between $\sigma^c$-KMS$_\beta$ states on $C^*(E, R)$ and pairs $(\mu, \{\varphi_x\}_{x})$ consisting of a probability measure $\mu$ on the unit space with Radon-Nikodym cocycle $e^{-\beta c}$ and a measurable field of states $\varphi_x$, each defined on $C^*(\mathcal{G}_x^x)$ and satisfying $\phi_x(u_g)=\phi_{r(h)}(u_{hgh^{-1}})$ and $\phi_x(u_{g'})=0$ for $\mu$-a.e. $x$, all $g\in\mathcal{G}_x^x$, all $h\in\mathcal{G}_x$ and all $g'\in\mathcal{G}_x^x\setminus c^{-1}(0)$. Notice that a probability measure on the unit space with Radon-Nikodym cocycle $e^{-\beta c}$ is the same as an element of our $C^\beta$. If $(x,n,x)\in\mathcal{G}_x^x$, then $c((x,n,x))\ne 0$ unless $n=0$ (because of our assumption that $N(u)\ne 1$ unless $u\in E^0$). It follows that  there is just one state on $C^*(\mathcal{G}_x^x)$ satisfying that $\phi_x(u_{g'})=0$ for $g'\in\mathcal{G}_x^x\setminus c^{-1}(0)$ (cf. \cite[\S 6.4]{aHLRS}). Thus \cite[Theorem 1.3]{Nesh} gives the equivalence between  $A^\beta$ and $C^\beta$ from Theorem~\ref{thm:kms}.
\end{remark}

\begin{remark}
	Our Theorem~\ref{thm:kms} at $\beta=0$ generalizes one result from \cite{MR1991743}: the functions $m$ in $D^0$ are the  graph-traces in \cite{MR1991743}, and the bijective correspondence between tracial states on $C^*(E)$ and graph traces on $E$ under the assumption that the graph $E$ satisfies condition (K) is contained in Theorem~\ref{thm:kms}.
\end{remark}

\begin{proposition} \label{prop:state}
	In the setting of Theorem \ref{thm:kms}, let $\tilde\iota$ and $F$ be as in Corollary \ref{coro:inclusion}. Then $\omega\mapsto\omega\circ {\tilde\iota}\inv\circ F$ defines a convex isomorphism from $B^\beta$ to $A^\beta$ for $\beta\in [0,\infty)$, and a convex isomorphism from $B^{\operatorname{gr}}$ to $A^{\operatorname{gr}}$.
\end{proposition}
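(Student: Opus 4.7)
The plan is to transfer the statement to the partial crossed product via the isomorphism $\rho\colon C^*(E,R)\to \cros$ of Theorem~\ref{theorem:partial}, and then invoke the Exel-Laca characterization of KMS and ground states for orthogonal, semi-saturated partial actions of a free group, namely \cite[Theorem 4.3]{MR1953065}. Since $\rho(\sigma_t(s_e))=(N(e))^{it}\delta_e$ and $\rho(\sigma_t(p_v))=\iota(\cha{\cyl{v}\cap \partial_R E})$ for every $e\in E^1$ and $v\in E^0$, the map $\rho$ intertwines $\sigma$ with the dynamics on $\cros$ introduced in \eqref{eq:sigma-from-N}; hence $A^\beta$ (respectively $A^{\operatorname{gr}}$) is in canonical affine bijection, via post-composition with $\rho\inv$, with the KMS$_\beta$ states (respectively ground states) of $(\cros,\sigma)$.

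The next step is to identify the conditional expectation $F$ of Corollary~\ref{coro:inclusion}\eqref{item:c2} with the canonical conditional expectation $E_{\31}\colon \cros\to \iota(C_0(\partial_R E))$ which, on the dense $*$-subalgebra of $\ell^1$-functions $\free\to C_0(\partial_R E)$, sends $\sum_g\iota(f_g)\delta_g$ to $\iota(f_{\31})$. Since $\rho\circ\tilde\iota=\iota$ by construction, and since using Lemma~\ref{lem:alternative-U-phi} one verifies that $\rho(s_u s_{u'}^*)=\iota(\cha{\cyl{u}\cap \partial_R E})\delta_{u(u')\inv}$ for $u,u'\in E^*$ (with $u(u')\inv=\31$ exactly when $u=u'$), it follows that $\rho\circ F=E_{\31}\circ\rho$ on the spanning set $\{s_us_{u'}^*\mid u,u'\in E^*\}$, hence everywhere by Corollary~\ref{coro:inclusion}\eqref{item:c2}. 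Consequently $\omega\mapsto\omega\circ\tilde\iota\inv\circ F$ corresponds, under conjugation by $\rho$, to the map $\omega\mapsto\omega\circ\iota\inv\circ E_{\31}$.

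Applying \cite[Theorem 4.3]{MR1953065} to the semi-saturated, orthogonal partial action $\Phi$ of $\free$ on $C_0(\partial_R E)$ from Proposition~\ref{prop:action-on-path-space}, this last map is an affine bijection from the states $\omega$ of $C_0(\partial_R E)$ satisfying $\omega(\tpa_e(f))=(N(e))^{-\beta}\omega(f)$ for all $e\in E^1$ and $f\in C_0(\tdomain{e})$ onto the KMS$_\beta$ states of $\cros$; this scaling condition is precisely the definition of $B^\beta$. For the ground state part, one applies the ground state portion of the same theorem (or verifies directly, by testing the boundedness of $z\mapsto\psi(a\sigma_z(b))$ on the upper half-plane at $a=\delta_e$, $b=\delta_e^*$, which yields $\psi(\iota(\cha{\rang{e}}))=0$ since $\abs{N(e)^{-iz}}$ grows without bound when $\operatorname{Im}(z)\to\infty$) to single out exactly the states in $B^{\operatorname{gr}}$. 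Composing with $\rho\inv$ produces the asserted convex isomorphisms $B^\beta\to A^\beta$ and $B^{\operatorname{gr}}\to A^{\operatorname{gr}}$.

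The main delicate point I expect is the identity $\rho\circ F=E_{\31}\circ\rho$; it requires careful bookkeeping with reduced forms in $\free$ and with the partial-action formulas in Lemma~\ref{lem:alternative-U-phi}, in order to write $\rho(s_us_{u'}^*)$ uniformly as an element of $\iota(C_0(\partial_R E))\delta_g$ for $g=u(u')\inv$ (paying attention to the case when $u$ or $u'$ is a vertex). Once this identification is in place, the proposition is a direct application of Exel-Laca.
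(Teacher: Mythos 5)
Your strategy for $\beta>0$ and for ground states is essentially the paper's: transfer everything through the isomorphism $\rho$ of Theorem~\ref{theorem:partial}, identify $F$ with the canonical conditional expectation onto $\iota(C_0(\partial_R E))$ (this is exactly how $F$ is produced in Corollary~\ref{coro:inclusion}\eqref{item:c2}, via \cite[Proposition 2.3]{MR1953065}), and then quote \cite[Theorem 4.3]{MR1953065} for the semi-saturated orthogonal partial action $\Phi$. Your extra bookkeeping with $\rho(s_us_{u'}^*)=\iota(\cha{\cyl{u}\cap\partial_R E})\delta_{u(u')\inv}$ is correct in spirit and matches what the paper implicitly relies on, and your direct verification that a ground state must kill $\iota(\cha{\rang{e}})$ is sound since $\abs{N(e)^{-iz}}=N(e)^{\operatorname{Im}(z)}\to\infty$.

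There is, however, a genuine gap: the statement covers all $\beta\in[0,\infty)$, and your blanket appeal to \cite[Theorem 4.3]{MR1953065} does not cover $\beta=0$. The Exel--Laca theorem yields the correspondence for KMS$_\beta$ states with $\beta>0$ and for ground states, but a KMS$_0$ state is by definition a $\sigma$-\emph{invariant trace}, and this case is not a formal specialization of the $\beta>0$ argument (at $\beta=0$ the scaling factors $N(e)^{-\beta}$ all equal $1$, and invariance no longer follows from the KMS identity). The paper treats $\beta=0$ by a separate direct computation: given $\omega\in B^0$ one checks the trace identity $\psi(s_{u_1}s_{u_2}^*s_{u_3}s_{u_4}^*)=\psi(s_{u_3}s_{u_4}^*s_{u_1}s_{u_2}^*)$ on the spanning set by reducing both sides to $\omega(\cha{\domain{u}})$ for a common tail $u$; conversely, given $\psi\in A^0$ one uses $\sigma$-invariance to see that $\psi(s_us_{u'}^*)=0$ unless $\abs{u}=\abs{u'}$ and the trace property together with orthogonality of range projections to force $u=u'$, so that $\psi$ is recovered from $\omega=\psi\circ\tilde\iota$, whose invariance under each $\phi_e$ is then verified from $\psi(s_{eu}s_{eu}^*)=\psi(s_e^*s_es_us_u^*)=\psi(s_us_u^*)$. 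You would need to supply this (or an equivalent) argument to close the $\beta=0$ case.
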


\begin{proof}
	For ground states and for $\beta>0$, the result follows directly from Theorem 4.3 of
	\cite{MR1953065}, Theorem \ref{theorem:partial} and Corollary \ref{coro:inclusion}. It remains to prove the case $\beta=0$, which  comes down to characterizing $\sigma$-invariant traces on $C^*(E, R)$.
	
	If $\omega\in B^0$, then $\psi:=\omega\circ {\tilde\iota}\inv\circ F$ is a $\sigma$-invariant state. Since $C^*(E,R)=\spac\{s_{u_1}s_{u_2}^*\mid u_1,u_2\in E^*\}$, it suffices to show that $\psi(s_{u_1}s_{u_2}^*s_{u_3}s_{u_4}^*)=\psi(s_{u_3}s_{u_4}^*s_{u_1}s_{u_2}^*)$ for $u_1,u_2,u_3,u_4\in E^*$ in order to prove that $\psi\in A^0$. We extend the definition of $\domain{u}$ and $\rang{u}$ to all $u\in E^*$ by letting $\domain{v}=\rang{v}=\cyl{v}\cap\partial_RE$ for $v\in E^0$. We then have that $\psi(s_{u_1}s_{u_2}^*s_{u_3}s_{u_4}^*)=0$ unless either
	\begin{enumerate}
		\item[{}] $u_2=u_3u$ and $u_1=u_4u$ for some $u\in E^*$, in which case $\psi(s_{u_1}s_{u_2}^*s_{u_3}s_{u_4}^*)=\psi(s_{u_1}s_{u_1}^*)=\omega\bigl(\cha{U(u_1)}\bigr)=\omega\bigl(\cha{\domain{u_1}}\bigr)=\omega\bigl(\cha{\domain{u}}\bigr)$, or
		\item[{}] $u_3=u_2u$ and $u_4=u_1u$ for some $u\in E^*$, in which case $\psi(s_{u_1}s_{u_2}^*s_{u_3}s_{u_4}^*)=\psi(s_{u_4}s_{u_4}^*)=\omega\bigl(\cha{U(u_4)}\bigr)=\omega\bigl(\cha{\domain{u_4}}\bigr)=\omega\bigl(\cha{\domain{u}}\bigr)$.
	\end{enumerate}
	Similarly, $\psi(s_{u_3}s_{u_4}^*s_{u_1}s_{u_2}^*)=0$ unless either
	\begin{enumerate}
		\item[{}] $u_2=u_3u$ and $u_1=u_4u$ for some $u\in E^*$, in which case $\psi(s_{u_3}s_{u_4}^*s_{u_1}s_{u_2}^*)=\psi(s_{u_2}s_{u_2}^*)=\omega\bigl(\cha{U(u_2)}\bigr)=\omega\bigl(\cha{\domain{u_2}}\bigr)=\omega\bigl(\cha{\domain{u}}\bigr)$, or
		\item[{}] $u_3=u_2u$ and $u_4=u_1u$ for some $u\in E^*$, in which case $\psi(s_{u_1}s_{u_2}^*s_{u_3}s_{u_4}^*)=\psi(s_{u_4}s_{u_4}^*)=\omega\bigl(\cha{U(u_4)}\bigr)=\omega\bigl(\cha{\domain{u_4}}\bigr)=\omega\bigl(\cha{\domain{u}}\bigr)$.
	\end{enumerate}
	Thus, $\omega\mapsto\omega\circ {\tilde\iota}\inv\circ F$ is a map from $B^0$ to $A^0$. It is clear that it is a convex map and that it is injective.
	
	Let $\psi\in A^0$. It follows from the $\sigma$-invariance of $\psi$ that $\psi(s_{u}s_{u'}^*)=0$ unless $\abs{u}=\abs{u'}$; in case  $\abs{u}=\abs{u'}$, then it follows from the trace property of $\psi$ that $\psi(s_{u}s_{u'}^*)=\psi(s_{u'}^*s_{u})=0$ unless $u=u'$, because $s_u$ and $s_{u'}$ have orthogonal range projections. Thus $\omega:=\psi\circ{\tilde\iota}$ is a state of $C_0(\partial_RE)$ such that $\omega\circ {\tilde\iota}\inv\circ F=\psi$. Let $e\in E^1$. If $u\in r(e)E^*$, then
	\begin{multline} \label{eq:inv}
		\omega(\cha{\cyl{u}\cap\partial_RE}\circ\phi_e\inv)=
		\omega(\cha{\cyl{eu}\cap\partial_RE})=
		\psi(s_{eu}s_{eu}^*)\\=
		\psi(s_e^*s_es_us_u^*)=
		\psi(s_us_u^*)=
		\omega(\cha{\cyl{u}\cap\partial_RE}).
	\end{multline}
	Since $C_0(\domain{e})=\spac\{\cha{\cyl{u}\cap\partial_RE}\mid u\in r(e)E^*\}$, the calculations \eqref{eq:inv} show that $\omega\in B^0$. Thus, $\omega\mapsto\omega\circ {\tilde\iota}\inv\circ F$ is surjective and therefore a convex isomorphism from $B^0$ to $A^0$.
\end{proof}

\begin{lemma}\label{lem:riesz}
  Let $E$ be a directed graph, $R$ a subset of $E^0_\reg$, and let $M:E^1\to [0,\infty)$ be a function. Then
  the map
  \begin{equation}
    \label{eq:4}
    \mu\mapsto \left(f\mapsto \int f\ d\mu\right)
  \end{equation}
  is a bijective correspondence between the set of regular Borel probability
  measures $\mu$ on $\partial_RE$ satisfying that $\mu(\tpa_e(A))=M(e)\mu(A)$
  for all $e\in E^1$ and all Borel measurable subsets $A$ of
  $\tdomain{e}$, and the set of states $\eta$ of
  $C_0(\partial_RE)$ satisfying that
  $\eta(f\circ\tpa_e\inv)=M(e)\eta(f)$ for all $e\in E^1$ and all $f\in C_0\bigl(\tdomain{e}\bigr)$.
\end{lemma}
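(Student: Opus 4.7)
The plan is to identify the bijection in \eqref{eq:4} with the classical Riesz--Markov--Kakutani correspondence on the locally compact Hausdorff space $\partial_R E$ (see Corollary~\ref{cor:rps}) and to show that, under this identification, the two scaling conditions translate into each other via a simple change of variables.

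First I would recall that, since $\partial_R E$ is locally compact Hausdorff, the Riesz--Markov--Kakutani theorem asserts that $\mu\mapsto (f\mapsto\int f\,d\mu)$ is a bijection between regular Borel probability measures on $\partial_R E$ and states on $C_0(\partial_R E)$. Thus it suffices to show that $\mu$ satisfies the measure-theoretic scaling law if and only if $\eta_\mu:=\int(\cdot)\,d\mu$ satisfies the $C^*$-algebraic scaling law. Throughout, I would use that $\phi_e:\tdomain{e}\to\trang{e}$ is a homeomorphism between open subsets of $\partial_R E$, and that any $f\in C_0(\tdomain{e})$ is regarded as an element of $C_0(\partial_R E)$ by extension by zero, so that $f\circ\phi_e\inv\in C_0(\trang{e})\subseteq C_0(\partial_R E)$ (again zero off $\trang{e}$).

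For the forward direction, given $f\in C_0(\tdomain{e})$ I would compute
\begin{equation*}
\eta_\mu(f\circ\phi_e\inv)=\int_{\partial_R E}(f\circ\phi_e\inv)(x)\,d\mu(x)=\int_{\trang{e}}f(\phi_e\inv(x))\,d\mu(x)
\end{equation*}
and then apply the standard change-of-variables formula along the homeomorphism $\phi_e:\tdomain{e}\to\trang{e}$ to rewrite this as $\int_{\tdomain{e}}f(y)\,d(\phi_e^*\mu)(y)$, where $\phi_e^*\mu(A):=\mu(\phi_e(A))$. The hypothesis $\mu(\phi_e(A))=M(e)\mu(A)$ on Borel subsets of $\tdomain{e}$ exactly says $\phi_e^*\mu=M(e)\mu$ as measures on $\tdomain{e}$, whence $\eta_\mu(f\circ\phi_e\inv)=M(e)\int f\,d\mu=M(e)\eta_\mu(f)$.

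For the reverse direction, starting from the state scaling identity I would reconstruct the measure identity by approximating indicator functions of relatively compact open sets $U\subseteq\tdomain{e}$ by an increasing net of continuous functions $f_\lambda\in C_0(\tdomain{e})$ with $0\le f_\lambda\le \cha{U}$ and $f_\lambda\nearrow\cha{U}$ pointwise, using Urysohn's lemma. Applying the scaling identity to each $f_\lambda$ and passing to the limit by monotone (or dominated) convergence yields $\mu(\phi_e(U))=M(e)\mu(U)$, and then outer regularity of the Borel measure $\mu$ extends the identity to arbitrary Borel subsets $A\subseteq\tdomain{e}$. The only technical point — and the place to be careful — is making sure that the change of variables and the approximation are performed on the open set $\tdomain{e}$ rather than on all of $\partial_R E$, so that the supports of $f$ and $f\circ\phi_e\inv$ match up correctly; this is routine but is the main obstacle to keep tidy.
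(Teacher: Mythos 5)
Your proposal is correct. The overall skeleton is the same as the paper's: both invoke the Riesz representation theorem on the locally compact Hausdorff space $\partial_R E$ to get the bijection between regular Borel probability measures and states, and then reduce the lemma to showing that the two scaling conditions translate into one another. The difference lies in how the translation is carried out. The paper argues in both directions at once by observing that $C_0(\tdomain{e})$ is $\lVert\cdot\rVert_1$-dense in $L^1$ of the restricted measure, so the integral identity $\int f\circ\phi_e\inv\,d\mu=M(e)\int f\,d\mu$ passes between continuous functions and indicator functions of Borel sets. You instead do the forward direction by a change of variables along the homeomorphism $\phi_e$ (which is the same computation in pushforward language) and the reverse direction by Urysohn approximation of indicators of open sets, monotone convergence, and outer regularity. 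Your reverse-direction argument is slightly longer but sidesteps the point the paper glosses over, namely that extending the identity from a dense subspace of $L^1(\mu|_{\tdomain{e}})$ requires the functional $f\mapsto\int f\circ\phi_e\inv\,d\mu$ to be continuous in that norm, which is not immediate before the identity is known. Two small remarks on your version: since $\tdomain{e}=\cyl{r(e)}\cap\partial_R E$ is compact by Proposition~\ref{prop:action-on-path-space}\eqref{item:g5}, every open subset of it is automatically relatively compact, so that hypothesis costs nothing; and in the outer-regularity step one should treat the case $M(e)=0$ separately (there the identity on open sets gives $\mu(\phi_e(U))=0$ for all open $U\subseteq\tdomain{e}$, hence $\mu(\trang{e})=0$ and the Borel case is immediate), since dividing by $M(e)$ is otherwise implicit when passing infima through the scaling relation.
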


\begin{proof}
  It follows from Riesz' Representation Theorem (see for example
  \cite[6.16]{MR918770}) that \eqref{eq:4} is a bijective
  correspondence between the set of regular Borel probability measures
  on $\partial_RE$ and the set of states $\eta$ of
  $C_0(\partial_RE)$. So we just have to show that a regular Borel probability
  measure $\mu$ on $\partial_RE$ satisfies that $\mu(\tpa_e(A))=M(e)\mu(A)$
  for every $e\in E^1$ and every Borel measurable subset $A$ of
  $\tdomain{e}$ if and only if  $\int f\circ\tpa_{e}^{-1}\ d\mu=M(e)\int f\ d\mu$ for
  every $e\in E^1$ and every $f\in C_0\bigl(\tdomain{e}\bigr)$.

  For each $e\in E^1$ let $L^1(\tdomain{e})$ denote the set of
  functions on $\tdomain{e}$ which are integrable with respect to the
  restriction of $\mu$ to $\tdomain{e}$, and let $||\cdot||_1$ be the
  subnorm given by
  \begin{equation*}
    ||f||_1=\int_{\tdomain{e}}|f|d\mu.
  \end{equation*}
  We then have that
  $C_0(\tdomain{e})$ is dense in $L^1(\tdomain{e})$ with respect to
  $||\cdot ||_1$. It follows that if the identity $\int f\circ\tpa_{e}^{-1}\
  d\mu=M(e)\int f\ d\mu$ holds for every $f\in
  C_0\bigl(\tdomain{e}\bigr)$, then it holds for every $f\in
  L^1\bigl(\tdomain{e}\bigr)$. Then in particular
  \begin{equation*}
    \mu(\tpa_e(A))=\int \cha{\tpa_e(A)}\ d\mu=\int
    \cha{A}\circ\tpa_{e\inv}\ d\mu=M(e)\int\cha{A}\ d\mu=M(e)\mu(A)
  \end{equation*}
  for every Borel measurable subset $A$ of $\tdomain{e}$.

  If, on the other hand, $\mu(\tpa_e(A))=M(e)\mu(A)$
  for every Borel measurable subset $A$ of
  $\tdomain{e}$, then the identity $\int f\circ\tpa_{e}^{-1} \ d\mu=M(e)\int f\
  d\mu$ holds for every $f\in
  L^1\bigl(\tdomain{e}\bigr)$ and in particular for every $f\in
  C_0\bigl(\tdomain{e}\bigr)$.
\end{proof}

\begin{proposition} \label{prop:int}
	In the setting of Theorem \ref{thm:kms}, the map
	\begin{equation*}
    \mu\mapsto \left(f\mapsto \int f\ d\mu\right)
  \end{equation*}
 is a convex isomorphism from $C^\beta$ to $B^\beta$ for $\beta\in [0,\infty)$, and a convex isomorphism from $C^{\operatorname{gr}}$ to $B^{\operatorname{gr}}$.
\end{proposition}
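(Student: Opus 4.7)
My plan is to reduce the statement in two steps: first, the claim for $\beta\in[0,\infty)$ is essentially an immediate consequence of Lemma~\ref{lem:riesz}, and second, the ground state case follows directly from the Riesz representation theorem together with a small argument about null sets.

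For the first claim, I would simply apply Lemma~\ref{lem:riesz} with $M(e):=N(e)^{-\beta}$ for each $e\in E^1$. The scaling condition defining $C^\beta$ is precisely $\mu(\phi_e(A))=M(e)\mu(A)$, and the scaling condition defining $B^\beta$ is precisely $\omega(f\circ\phi_e^{-1})=M(e)\omega(f)$, so the lemma gives a bijective correspondence between $C^\beta$ and $B^\beta$ via $\mu\mapsto\bigl(f\mapsto\int f\,d\mu\bigr)$. That this bijection respects convex combinations is immediate from the linearity of integration in the measure.

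For the ground state case, the Riesz representation theorem once again gives a bijective correspondence between regular Borel probability measures on $\partial_R E$ and states of $C_0(\partial_R E)$ via $\mu\mapsto\bigl(f\mapsto\int f\,d\mu\bigr)$. All that remains is to check that under this correspondence the defining conditions of $C^{\operatorname{gr}}$ and $B^{\operatorname{gr}}$ match. By Proposition~\ref{prop:action-on-path-space}\eqref{item:g5}, each $U(e)$ is open and compact in $\partial_R E$, so $\cha{U(e)}$ is a genuine element of $C_0(\partial_R E)$ and $\int \cha{U(e)}\,d\mu=\mu(U(e))$. Hence $\omega(\cha{U(e)})=0$ for all $e\in E^1$ if and only if $\mu(U(e))=0$ for all $e\in E^1$, and by monotonicity of $\mu$ the latter is equivalent to $\mu(A)=0$ for every Borel measurable $A\subseteq U(e)$ and every $e\in E^1$.

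I do not expect any serious obstacle: the real content has already been put into Lemma~\ref{lem:riesz}, whose proof uses density of $C_0(U(e^{-1}))$ in $L^1(U(e^{-1}))$ to pass between the measure-theoretic and $C^*$-algebraic formulations of the scaling property. The only thing to be slightly careful about is that in the ground state case one must invoke both monotonicity (to go from the vanishing of $\mu$ on $U(e)$ to vanishing on arbitrary Borel subsets) and the clopen/compact nature of $U(e)$ (to ensure $\cha{U(e)}\in C_0(\partial_R E)$), but both are available from the results already established.
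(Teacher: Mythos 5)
Your proposal is correct and matches the paper's proof, which is precisely the one-line application of Lemma~\ref{lem:riesz} with $M(e)=(N(e))^{-\beta}$ for $\beta\in[0,\infty)$. The only cosmetic difference is in the ground-state case: the paper again invokes Lemma~\ref{lem:riesz}, now with $M\equiv 0$, whereas you re-derive that special case directly from the Riesz representation theorem; the identification of the resulting conditions with those defining $B^{\operatorname{gr}}$ and $C^{\operatorname{gr}}$ (via the compact open nature of $U(e)$ and monotonicity of $\mu$) is the same either way.
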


\begin{proof} Apply Lemma~\ref{lem:riesz} with the function $M:E^1\to
[0,\infty[$ given by $M(e)=(N(e))^{-\beta}$ when $\beta<\infty$, and $M(e)=0$ in case of  ground states.
\end{proof}

\begin{lemma} \label{lemma:hovedto}
  Let $E$ be a directed graph, let $R$ be a subset of $E^0_\reg$, and let $M$ be a function from $E^1$ to $[0,1]$. Then
  \begin{equation}
    \omega\mapsto \bigl(v\mapsto \omega(\cha{\cyl{v}\cap\partial_R
    E})\bigr) \label{eq:3}
  \end{equation}
  is a bijective correspondence between the set of states $\omega$ of
  $C_0(\partial_R E)$ such that
  $\omega(f\circ\phi_e\inv)=M(e)\omega(f)$ for all $e\in E^1$ and all $f\in C_0\bigl(\domain{e}\bigr)$, and the set of
  functions $m:E^0\to [0,1]$ satisfying \begin{enumerate}\renewcommand{\theenumi}{m\arabic{enumi}'}
  	\item\label{item:n1} $\sum_{v\in E^0}m(v)=1$;
    \item\label{item:n2} $m(v)=\sum_{e\in vE^1}M(e)m\bigl(r(e)\bigr)$ if $v\in R$;
    \item\label{item:n3} $m(v)\ge\sum_{e\in F}M(e)m\bigl(r(e)\bigr)$ for every finite subset $F$ of $E^1$.
  \end{enumerate}
\end{lemma}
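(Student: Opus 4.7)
My plan is to prove the lemma in three stages: verify the forward map, establish injectivity via a dense-subalgebra argument, and construct the inverse via measure theory.

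\textbf{Forward direction and conditions (m1')--(m3').} Given a state $\omega$ with the stated scaling, set $m(v):=\omega(\cha{\cyl{v}\cap\partial_R E})$ for $v\in E^0$. The computational core is to observe, via Lemma~\ref{lem:alternative-U-phi}(ii), that $\tpa_e$ maps $\tdomain{e}=\cyl{r(e)}\cap\partial_R E$ homeomorphically onto $\trang{e}=\cyl{e}\cap\partial_R E$, so $\cha{\cyl{e}\cap\partial_R E}=\cha{\cyl{r(e)}\cap\partial_R E}\circ\tpa_e\inv$, and the scaling then forces $\omega(\cha{\cyl{e}\cap\partial_R E})=M(e)m(r(e))$ for every $e\in E^1$. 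Condition (m1') reflects that $\omega$ corresponds by Riesz to a regular Borel probability measure, while $\{\cyl{v}\cap\partial_R E\}_{v\in E^0}$ is a countable Borel partition of $\partial_R E$. For (m2') with $v\in R$, since $r(v)=v\in R$ forces $v\notin\partial_R E$, we get the finite disjoint decomposition $\cyl{v}\cap\partial_R E=\bigsqcup_{e\in vE^1}(\cyl{e}\cap\partial_R E)$, and summing yields $m(v)=\sum_{e\in vE^1}M(e)m(r(e))$. For (m3'), when $F\subseteq vE^1$ is finite the sets $\cyl{e}\cap\partial_R E$ for $e\in F$ are disjoint subsets of $\cyl{v}\cap\partial_R E$, so positivity of $\omega$ gives the required inequality.

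\textbf{Injectivity.} By Proposition~\ref{prop:et}(iii), the characteristic functions of the basic compact-open sets $\cylf{u}\cap\partial_R E$ span a dense $*$-subalgebra of $C_0(\partial_R E)$, and each $\cha{\cylf{u}\cap\partial_R E}$ is a signed combination of cylinder indicators $\cha{\cyl{u'}\cap\partial_R E}$. Iterating the scaling relation gives $\omega(\cha{\cyl{u}\cap\partial_R E})=M(e_1)\cdots M(e_n)m(r(u))$ for $u=e_1\cdots e_n$, so $\omega$ is completely determined by $m$.

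\textbf{Surjectivity (main obstacle).} Given $m$ satisfying (m1')--(m3'), extend $M$ multiplicatively by $M(u):=M(e_1)\cdots M(e_n)$ for $u=e_1\cdots e_n$ and $M(v):=1$ for $v\in E^0$. By Lemma~\ref{lem:riesz} it suffices to build a regular Borel probability measure $\mu$ on $\partial_R E$ with $\mu(\cyl{u}\cap\partial_R E)=M(u)m(r(u))$ for all $u\in E^*$ and $\mu(\tpa_e(A))=M(e)\mu(A)$ for Borel $A\subseteq\tdomain{e}$. One defines $\mu$ in Carath\'eodory fashion on the Boolean algebra $\mathcal{A}$ of compact open subsets of $\partial_R E$ generated by the basis $\{\cylf{u}\cap\partial_R E\}$ of Proposition~\ref{prop:et}(iii). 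The essential consistency check concerns the refinement $\cyl{u}\cap\partial_R E=\bigsqcup_{e\in r(u)E^1}(\cyl{ue}\cap\partial_R E)$, valid when $r(u)\in R$, which is respected by the identity
\[
M(u)m(r(u))=\sum_{e\in r(u)E^1}M(ue)m(r(ue))
\]
supplied by (m2'); when $r(u)\notin R$, the complement $\cyl{u}\cap\partial_R E\setminus\bigsqcup_{e\in F}(\cyl{ue}\cap\partial_R E)$ for finite $F\subseteq r(u)E^1$ receives the non-negative excess mass $M(u)(m(r(u))-\sum_{e\in F}M(e)m(r(e)))$ guaranteed by (m3'). Finite additivity on $\mathcal{A}$ upgrades automatically to countable additivity since in a Hausdorff space any disjoint covering of a compact set by compact open sets is finite; (m1') supplies $\mu(\partial_R E)=1$; and the scaling property of $\mu$ holds on cylinders (via $\tpa_e(\cyl{u}\cap\partial_R E)=\cyl{eu}\cap\partial_R E$ when $s(u)=r(e)$) and extends to all Borel sets by a standard $\pi$--$\lambda$ argument. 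The hard part is verifying finite additivity of $\mu$ systematically: any compact open set has many decompositions into basic sets $\cylf{u}\cap\partial_R E$, and one must show consistency of the values using (m2') and (m3') at every refinement step.
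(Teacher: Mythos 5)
Your forward direction and injectivity arguments match the paper's proof essentially step for step and are correct (the only quibble: $E^0$ need not be countable, so ``countable Borel partition'' should be replaced by the inner-regularity/approximate-unit argument, but the conclusion (m1') still holds). The problem is in the surjectivity direction, where you have identified, but not closed, the one genuinely hard point of the whole lemma. Defining a finitely additive set function on the ring of compact open subsets of $\partial_R E$ requires showing that the value assigned to a compact open set is independent of its decomposition into basic sets $\cylf{u}\cap\partial_R E$, and that each such value is non-negative. This is not a routine refinement argument here: when $r(u)E^1$ is infinite you cannot refine $\cyl{u}\cap\partial_R E$ into finitely many cylinders of greater depth; the sets $\cylf{u}$ subtract cylinders $\cyl{u'}$ of varying lengths; and non-negativity of the candidate value of $\mu$ on $\cylf{u}\cap\partial_R E$ requires iterating (m3') down a whole finite tree of extensions of $u$, not just one level. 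You explicitly defer all of this (``the hard part is verifying finite additivity of $\mu$ systematically''), so the construction of $\mu$ is not actually carried out, and the surjectivity claim remains unproved.

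The paper avoids the consistency problem entirely by a functional-analytic device worth comparing with yours. Since $\{\cha{\cyl{u}}\mid u\in E^*\}$ is linearly independent in $C_0(E^{\le\infty})$, the assignment $\cha{\cyl{u}}\mapsto\tilde{m}(u):=M(u)m(r(u))$ extends to a well-defined linear functional $\tilde\omega_m$ on its span with no consistency check needed. The content of (m3') is then used exactly once, to prove $\abs{\tilde\omega_m(f)}\le\norm{f}_\infty$: writing $f=\sum_{u\in F}c_u\cha{\cyl{u}}$ with $F$ closed under taking initial segments, one rewrites
\begin{equation*}
\tilde\omega_m(f)=\sum_{u\in F}\Bigl(\sum_{u'\le u}c_{u'}\Bigr)\Bigl(\tilde{m}(u)-\sum_{e\in r(u)E^1,\ ue\in F}\tilde{m}(ue)\Bigr),
\end{equation*}
observes that $f(u)=\sum_{u'\le u}c_{u'}$ and that the weights in the second factor are non-negative with total mass at most $1$, and applies H\"older. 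One then extends to a state of $C_0(E^{\le\infty})$, uses (m2') to see that it annihilates the ideal $\spac\{\cha{\{u\}}\mid r(u)\in R\}$ of functions vanishing on $\partial_R E$, and descends to the desired state of $C_0(\partial_R E)$. If you want to keep your measure-theoretic route, you must supply the common-refinement and positivity verifications; the telescoping identity above is essentially what they amount to, so there is no saving of effort in the Carath\'eodory approach.
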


\begin{proof}
  Let $\omega$ be a state of $C_0(\partial_R E)$ such that
  $\omega(f\circ\phi_e\inv)=M(e)\psi(f)$ for all $e\in E^1$ and $f\in C_0(\domain{e})$. Let $m$ be the function from $E^0$ to $[0,1]$
  given by
  \begin{equation*}
  	m(v)=\omega(\cha{\cyl{v}\cap\partial_R E}).
  \end{equation*}
  Now, if $F$ runs over the finite subsets of $E^0$, then $\{\sum_{v\in
    F}\cha{\cyl{v}\cap\partial_R E}\}_{F}$ is an
  approximate unit for $C_0(\partial_R E)$. Hence $m$ satisfies \eqref{item:n1}.

  To show \eqref{item:n2} and \eqref{item:n3} notice first that if $e\in E^1$,
  then
  \begin{align*}
    \omega\bigl(\cha{\rang{e}}\bigr)
    &=
    \omega\bigl(\cha{\domain{e}}\circ\phi_e\inv\bigr) =
    M(e) \omega\bigl(\cha{\domain{e}}\bigr)\\
    &=
    M(e) \omega\bigl(\cha{\cyl{r(e)}\cap\partial_R E}\bigr) =
    M(e)m\bigl(r(e)\bigr).
  \end{align*}
  If $v\in R$, then
  $\cha{\cyl{v}\cap\partial_R E}=\sum_{e\in vE^1}\cha{\rang{e}}$ by \eqref{eq:Zv}. Hence
  \begin{equation*}
      m(v)=\omega\bigl(\cha{\cyl{v}\cap\partial_R E}\bigr)
      =\smashoperator{\sum_{e\in vE^1}}\omega\bigl(\cha{\rang{e}}\bigr)
      =\smashoperator{\sum_{e\in vE^1}}M(e)m\bigl(r(e)\bigr),
  \end{equation*}
which gives \eqref{item:n2}.  If $v\in E^0$ and $F$ is a finite subset of $vE^1$, then
  $\cha{\cyl{v}\cap\partial_R E}\ge\sum_{e\in F}\cha{\rang{e}}$, so \eqref{item:n3} follows from the calculations
  \begin{equation*}
      m(v)=\omega\bigl(\cha{\cyl{v}\cap\partial_R E}\bigr)
      \ge\smashoperator{\sum_{e\in F}}\omega\bigl(\cha{\rang{e}}\bigr)
      =\smashoperator{\sum_{e\in F}}M(e)m\bigl(r(e)\bigr).
  \end{equation*}

  Since $\omega(\cha{\cyl{eu}\cap\partial_R E})=\omega(\cha{\cyl{u}\cap\partial_R E}\circ\phi_e\inv)=
  M(e)\omega(\cha{\cyl{u}\cap\partial_R E})$ for all
  $e\in E^1$ and all $u\in E^*$ with $s(u)=r(e)$, the restriction
  of $\omega$ to $\{\cha{\cyl{u}\cap\partial_R E}\mid u\in E^*\}$ is completely
  determined by the restriction
  of $\omega$ to $\{\cha{\cyl{v}\cap\partial_R E}\mid v\in E^0\}$. As seen in the proof of Lemma~\ref{lem:span-partialcp}, the
space  $\spa\{\cha{\cyl{u}\cap\partial_R E}\mid u\in E^*\}$ is dense in $C_0(\partial_R E)$. Therefore the correspondence
  given in \eqref{eq:3} is injective.

  We will now prove that it is surjective. Let $m:E^0\to [0,1]$ be a
  function that satisfies \eqref{item:n1}-\eqref{item:n3}.
  For each $u=e_1e_2\dotsm e_k\in E^*$, set
  \begin{equation*}
  \tilde{m}(u)=  M(e_1)M(e_2)\dotsm M(e_k)m(r(e_k)).
  \end{equation*}
  Straightforward calculations show that for $u\in E^*$,
  \begin{align*}
    \tilde{m}(u)&=\smashoperator{\sum_{e\in r(u)E^1}}\tilde{m}(ue),\text{ if }r(u)\in R,\\
    \tilde{m}(u)&\ge\smashoperator{\sum_{e\in F}}\tilde{m}(ue),
    \text{ if  $F$ is a finite subset of }r(u)E^1.
  \end{align*}

	Since $\{\cha{\cyl{u}}\mid u\in E^*\}$ is a linearly independent subset of $C_0(E^{\le\infty})$, it follows that there exists a linear map $\tilde\omega_m$ from $\spa\{\cha{\cyl{u}}\mid u\in E^*\}$ to $\C$ which maps $\cha{\cyl{u}}$ to $\tilde{m}(u)$ for $u\in E^*$. We show next that $\tilde\omega_m$ extends to a state of $C_0(E^{\le\infty})$. To begin with, we show that $\tilde\omega_m$ is bounded and its norm is not greater than 1. Let $f\in \spa\{\cha{\cyl{u}}\mid u\in E^*\}$. Then there exist a finite subset $F$ of $E^*$ and complex numbers $(c_u)_{u\in F}$ such that
  \begin{equation*}
  f=\sum_{u\in F}c_u \cha{\cyl{u}},
 \end{equation*}
	and such that $(u\in F \text{ and }u'\le u) \Rightarrow u'\in F$. We then have that
	\begin{equation*}
		\tilde\omega_m(f)=\sum_{u\in F}c_u\tilde{m}(u)=\sum_{u\in F}\biggl(\sum_{u'\le u}c_{u'}\biggr)\biggl(\tilde{m}(u)-\sum_{e\in r(u)E^1,\ ue\in F}\tilde{m}(ue)\biggr).
	\end{equation*}
	Since $f(u)=\sum_{u'\le u}c_{u'}$ for $u\in F$ and $$\sum_{u\in F}\biggl(\tilde{m}(u)-\sum_{e\in r(u)E^1,\ ue\in F}\tilde{m}(ue)\biggr)=\sum_{v\in E^0\cap F}{m}(v)\le 1,$$ it follows from H\"older's inequality that $\abs{\tilde{\omega}_m(f)}\le \norm{f}_\infty$. Thus we can extend $\tilde\omega_m$ to a bounded linear functional with norm less than or equal to 1 on $\spac\{\cha{\cyl{u}}\mid u\in E^*\}=C_0(E^{\le\infty})$. The family $(\sum_{v\in F}\cha{\cyl{v}})_{F}$ indexed over finite subsets $F$ of $E^0$ forms an approximate unit for $C_0(E^{\le\infty})$, and \eqref{item:m1} therefore implies that $\lim_{F\subset E^0}\tilde\omega_m( \sum_{v\in F}\cha{\cyl{v}})=1$. Thus $\tilde\omega_m$ is a state of $C_0(E^{\le\infty})$ (e.g. from \cite[Theorem 3.3.3]{MR1074574}).
	
	It follows from Proposition \ref{prop:et}(v) and the definition of $\partial_RE$ that $\{f\in C_0(E^{\le\infty})\mid f(x)=0\text{ for all }x\in\partial_RE\}=\spac\{\cha{\{u\}}\mid r(u)\in R\}$. Since for
$r(u)\in R$ we have
	\begin{equation*}
		\tilde\omega_m(\cha{\{u\}})=\tilde\omega_m\biggl(\cha{\cyl{u}}-\sum_{e\in r(u)E^1}\cha{\cyl{ue}}\biggr)=\tilde{m}(u)-\smashoperator{\sum_{e\in r(u)E^1}}\tilde{m}(ue)=0,
	\end{equation*}
it follows that $\tilde\omega_m$ induces a state $\omega_m$ on $C_0(\partial_RE)$ which maps $\cha{\cyl{u}\cap\partial_RE}$ to $\tilde{m}(u)$ for $u\in E^*$.

  Let $e\in E^1$ and $u\in r(e)E^*$. Then
  \begin{equation*}
    \omega_m(\cha{\cyl{u}\cap\partial_RE}\circ\phi_e\inv)=\omega_m(\cha{\cyl{eu}\cap\partial_RE})=\tilde{m}(eu)
    =M(e)\omega_m(\cha{\cyl{u}\cap \partial_RE}).
  \end{equation*}
  As already noticed, $\spac\{\cha{\cyl{u}\cap\partial_R E}\mid u\in r(e)E^*\}=C_0(U(e\inv))$, and therefore
  $\omega_m(f\circ\phi_e\inv)=M(e)\omega_m(f)$ for every $f\in C_0(\domain{e})$. Since
  $\omega_m(\cha{\cyl{v}\cap\partial_R E})=m(v)$ for every $v\in E^0$, we have shown the claimed surjectivity.
\end{proof}

\begin{proposition} \label{prop:function}
	In the setting of Theorem \ref{thm:kms}, the map from \eqref{eq:3}
 is a convex isomorphism from $B^\beta$ to $D^\beta$ for $\beta\in[0,\infty)$, and from
$B^{\operatorname{gr}}$ to $D^{\operatorname{gr}}$.
\end{proposition}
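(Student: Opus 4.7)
The plan is to derive Proposition~\ref{prop:function} as an essentially immediate consequence of Lemma~\ref{lemma:hovedto}, by specialising the auxiliary function $M\colon E^1\to [0,1]$ appropriately in each of the two cases. For $\beta\in[0,\infty)$ I would set $M(e)=(N(e))^{-\beta}$; since $N(e)>1$, this takes values in $(0,1]\subseteq [0,1]$, so the choice is legitimate. With this $M$, the scaling condition $\omega(f\circ\phi_e^{-1})=(N(e))^{-\beta}\omega(f)$ defining $B^\beta$ is literally the hypothesis imposed on $\omega$ in Lemma~\ref{lemma:hovedto}, and that lemma then identifies the image of $B^\beta$ under the map \eqref{eq:3} with the set of functions $m\colon E^0\to[0,1]$ satisfying (m1'), (m2'), (m3').

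The next step is to recognise that (m1')--(m3') with $M(e)=(N(e))^{-\beta}$ are the conditions (m1)--(m3) defining $D^\beta$. This is immediate for (m1) and (m2); for (m3) one uses the equivalence, recorded immediately after the definition of $D^\beta$, between the single inequality $m(v)\ge\sum_{e\in vE^1}(N(e))^{-\beta}m(r(e))$ and the family of finite-subset inequalities $m(v)\ge\sum_{e\in F}(N(e))^{-\beta}m(r(e))$ for $F\subseteq vE^1$ finite. Thus the bijection of Lemma~\ref{lemma:hovedto} restricts to a bijection from $B^\beta$ onto $D^\beta$.

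For the ground-state statement I would apply Lemma~\ref{lemma:hovedto} with $M\equiv 0$. The hypothesis $\omega(f\circ\phi_e^{-1})=0$ for all $f\in C_0(\domain{e})$ is equivalent to $\omega$ vanishing on $C_0(\rang{e})$, hence to $\omega(\cha{\rang{e}})=0$, which is precisely the definition of $B^{\operatorname{gr}}$. On the $m$-side, (m2') reads $m(v)=0$ for $v\in R$, (m3') is automatic (every term on the right vanishes), and only the normalisation (m1') remains, carving out exactly $D^{\operatorname{gr}}$.

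Convexity costs nothing: the map $\omega\mapsto \bigl(v\mapsto \omega(\cha{\cyl{v}\cap\partial_R E})\bigr)$ is affine in $\omega$, so the bijections produced by Lemma~\ref{lemma:hovedto} are automatically convex isomorphisms. I do not foresee any real obstacle here: the analytic content has been entirely absorbed into Lemma~\ref{lemma:hovedto}, and the proposition is a straightforward matching of the two families of conditions, with the only small subtlety being the translation between the finite-subset form and the full sum in (m3).
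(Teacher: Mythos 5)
Your proposal is correct and coincides with the paper's own proof, which likewise consists of applying Lemma~\ref{lemma:hovedto} with $M(e)=(N(e))^{-\beta}$ for $\beta\in[0,\infty)$ and $M\equiv 0$ for ground states; the matching of conditions (m1$'$)--(m3$'$) with the definitions of $D^\beta$ and $D^{\operatorname{gr}}$, and the affineness of the map \eqref{eq:3}, are exactly the (routine) points the paper leaves implicit.
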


\begin{proof} Apply Lemma~\ref{lemma:hovedto} with the function $M:E^1\to
[0,\infty)$ given by $M(e)=(N(e))^{-\beta}$ when $\beta<\infty$, and $M(e)=0$ in case of ground states.
\end{proof}

\section{Extremal KMS states}\label{section:extreme}

In this section we aim to give a description of the  extreme points of $D^\beta$ for $\beta\geq 0$. Ideally, we want a description that is valid for arbitrary graphs. However, this task seems to be quite difficult. We will identify certain subsets of the set of extreme points of $D^\beta$, see Theorem~\ref{thm:finitetype-reg} and Theorem~\ref{thm:infinitetype-crit}. The strategy will be to  describe the supports of the corresponding measures in $C^\beta$. For certain families of graphs (in particular all graphs with finitely many vertices), our description will give all the extremal KMS states.

Throughout this section $E$ will denote a directed graph, $R$ a subset of $E^0_\reg$, $N:E^1\to (1,\infty)$ a function, and $\beta\in [0,\infty)$. We extend the function $N$ to $E^*$ by letting $N(v)=1$ for $v\in E^0$ and by letting $N(u)=N(u_1)\dotsm N(u_n)$ for $u=u_1\dotsm u_n\in E^n$ and $n\geq 1$. We adopt the following convention: if $m\in D^\beta$ and
$\mu$ is the unique element of $C^\beta$ given by Propositions~\ref{prop:int} and \ref{prop:function} such that
\begin{equation}\label{eq:convention-mu-m}
\mu({Z(v')}\cap \partial_R E)=m(v')
\end{equation}
 for all $v'\in E^0$,  we say that $\mu$ is the \emph{measure associated to $m$}.

To begin with, we divide the elements of $D^\beta$  in terms of finite and infinite type  measures in $C^\beta$, similar to what is done in \cite{MR1953065}.

\begin{definition} Let $m\in D^\beta$ and let $\mu$ be the measure associated to $m$. Then
\begin{enumerate}
\item $m$ is of \emph{finite type} if  $\mu(E^*\cap \partial_R E)=1$, and
\item $m$ is  of \emph{infinite type} if  $\mu(E^\infty)=1$.
\end{enumerate}
We let $\Cf$ and $D^{\beta}_{\operatorname{inf}}$  denote, respectively, the sets of $m$ of finite type and of infinite type.
\end{definition}

For the infinite type measures we introduce the following refinement.

\begin{definition} Let $E$ be a directed graph.

 \textnormal{(a)} We define the set $E^\infty_{\operatorname{rec}}$ of \emph{recurrent} paths to be the collection of all infinite paths that meet some vertex of $E^0$ infinitely many times: thus $x\in E^\infty_{\operatorname{rec}}$ if and only if there is $v\in E^0$ such that $\{u\in E^*\mid u<x, r(u)=v\}$ is infinite.

\textnormal{(b)} We define the set $E^\infty_{\operatorname{wan}}$ of \emph{wandering} paths to be the collection of all $x\in E^\infty$ such that for every $v\in E^0$, the set $\{u\in E^*\mid u<x, r(u)=v\}$ is finite.

Note that $\Ewan=\emptyset$ when $E^0$ is finite. In general, $E^\infty=\Erec\sqcup \Ewan$.
\end{definition}

\begin{definition} Let $\mu\in C^\beta$. Following \cite{Tho2}, we say that
\begin{enumerate}
\item $\mu$ is   \emph{conservative} if it has support on $E^\infty_{\operatorname{rec}}$, and
\item $\mu$ is \emph{dissipative} if  it has support on $E^\infty_{\operatorname{wan}}$.
\end{enumerate}
We let  $\Cinfa$ and $\Cinfb$ denote, respectively, the sets of functions $m$ whose associated measure via \eqref{eq:convention-mu-m} is  conservative, respectively dissipative. In either instance we shall refer to $m$ itself as being conservative or dissipative.
\end{definition}

Note that $\Cinfb=\emptyset$ if $\Ewan=\emptyset$ (in particular if $E^0$ is finite). Example~\ref{ex:dis} and Example~\ref{ex:motivating} provide examples where $\Cinfb\ne\emptyset$.

\begin{remark}\label{rmk:decom}
	The three subsets $E^*\cap \partial_R E$, $E^\infty_{\operatorname{rec}}$, and $E^\infty_{\operatorname{wan}}$ of $\partial_RE$ are all invariant under the partial action $\Phi$. It follows that every $m\in D^\beta$ in a unique way can be written as a convex combination of an element of $\Cf$, an element of $\Cinfa$ and an element of $\Cinfb$.
\end{remark}

It follows from Remark \ref{rmk:decom} that the set of extreme points of $D^\beta$ is the disjoint union of the sets of extreme points of $\Cf$, $\Cinfa$, and $\Cinfb$. We will in Theorem~\ref{thm:finitetype-reg} and Theorem~\ref{thm:infinitetype-crit} identify the extreme points of $\Cf$ and $\Cinfa$. Hence, if $\Cinfb=\emptyset$ (in particular if $E^0$ is finite), then we obtain a complete description of all the extreme points of $D^\beta$ and thus a complete description of all the KMS states of $(C^*(E,R),\sigma)$.

In order to define distinguished sets of vertices on which some of the extreme points of $D^\beta$ will be
supported we need to introduce some notation. For $v\in E^0$, let
$$vE^*v=\{u\in E^*\mid s(u)=r(u)=v\}$$
be the collection of all finite paths starting and ending at $v$ (also referred to as \emph{loops} or \emph{cycles} at $v$), and let
\begin{equation*}
	\Evav=\{u\in E^*\mid s(u)=r(u)=v,\ u\ne v,\ r(u')\ne v\text{ for any }v<u'<u\}
\end{equation*}
be the set of paths starting and ending at $v$ with length at least 1 and containing no proper subpath that is a loop at $v$ (these are sometime called \emph{simple} loops or cycles). Notice that $\Evav$ might be empty, but that $v\in vE^*v$. In fact,
\begin{equation*}
	vE^*v=\{v\}\cup\bigcup_{n=1}^\infty\{u_1u_2\cdots u_n\mid u_1,u_2,\dots,u_n\in\Evav\}.
\end{equation*}

Recall that $E^*v=\{u\in E^*\mid r(u)=v\}$ is the set of finite paths ending in $v$. We let
\begin{equation*}
\Eva=\{u\in E^*\mid r(u)=v,\ r(u')\ne v\text{ for any }u'<u\}
\end{equation*}
be the set of finite paths ending in $v$ such that no proper subpath has range $v$. Notice that $E^*v$ and $\Eva$ are both non-empty since $v\in E^*_av\subseteq E^*v$.

Next we associate partition functions to the sets $\Evav$ and $\Eva$ as follows:
\begin{align}
	\Zvav(\beta)&=\sum_{u\in\Evav}N(u)^{-\beta}\label{eq:Zvav}\\
	\Zva(\beta)&=\sum_{u\in\Eva}N(u)^{-\beta}.\label{eq:Zva}
\end{align}
Notice that $\Zvav(\beta)$ might be 0 (since $\Evav$ might be empty), whereas $\Zva(\beta)\ge 1$ (because $v\in\Eva$).

We now define the following distinguished sets of vertices.
\begin{gather}
\Ereg=\{v\in E^0\mid \Zva(\beta)<\infty \text{ and }\Zvav(\beta)< 1\}, \label{eq:Ereg}\\
	\Ecrit=\{v\in E^0\mid \Zva(\beta)<\infty\text{ and }\Zvav(\beta)= 1\}.\label{eq:Ecrit}
\end{gather}
The abbreviations in the notation stand for regular and critical, respectively. We shall refer to $\Eequ:=\{v\in E^0\mid \Zva(\beta)<\infty\text{ and }\Zvav(\beta)\le 1\}$ as the set of equivariant points. The main results of this section will establish that elements in $\Cf$ are determined by $\Ereg\setminus R$, and elements in $\Cinfa$ by (equivalence classes of elements in) $\Ecrit$, cf. Theorems~\ref{thm:finitetype-reg} and \ref{thm:infinitetype-crit}. In particular $\Cf=\emptyset$ if and only if $\Ereg\setminus R=\emptyset$, and $\Cinfa=\emptyset$ if and only if $\Ecrit=\emptyset$.

Towards defining extreme points of $D^\beta$ we need to keep track of paths between a pair of vertices. Thus, for $v,v'\in E^0$ we let
\begin{equation*}
	v'E^*v=\{u\in E^*\mid s(u)=v',\ r(u)=v\}
\end{equation*}
be the set of finite paths starting at $v'$ and ending at $v$, and we let
\begin{equation*}
	\Es{v'}{v}=\{u\in E^*\mid s(u)=v',\ r(u)=v,\ r(u')\ne v\text{ for any }u'<u\}
\end{equation*}
be the set of finite paths starting at $v'$ and ending at $v$ such that no proper subpath has range $v$. In general, the sets $v'E^*v$ and $\Es{v'}{v}$ could be empty. Note however that $\Es{v'}{v}\subseteq v'E^*v$ and that $\Es{v}{v}=\{v\}$.
Notice also that $v'E^*v=\{uu'\mid u\in\Es{v'}{v},\ u'\in{v}E^*{v}\}$ and $\Eva=\bigcup_{v'\in E^0}\Es{v'}{v}$.

\begin{definition}\label{def:m-of-v}
For $v\in\Eequ$, let $m^\beta_v:E^0\to [0,\infty]$ be given by
	\begin{equation}\label{eqdef:m_v}
		m^\beta_v(v')=\sum_{u\in\Es{v'}{v}}N(u)^{-\beta}(\Zva(\beta))^{-1}.
	\end{equation}
\end{definition}

We are now in a position to state the first main result of this section, which provides a description of the elements of $\Cf$.

\begin{theorem}\label{thm:finitetype-reg}
Let $\beta\in [0,\infty)$. The map $W_{\operatorname{fin}}$ from $\Cf$ to the set of $[0,1]$-valued functions on $\Ereg\setminus R$ given by
\begin{equation}\label{eqdef:W-fin}
	W_{\operatorname{fin}}(m)(v)=\frac{\Zva(\beta)}{1-\Zvav(\beta)} \Biggl(m(v)-\sum_{e\in vE^1} N(e)^{-\beta}m(r(e))\Biggr)
\end{equation}
for $v\in\Ereg\setminus R$, is a convex isomorphism
onto $\bigl\{\epsilon:\Ereg\setminus R\to [0,1]\biggm\vert \sum_{v\in\Ereg \setminus R}\epsilon(v)=1\bigr\}$.
	The inverse of $W_{\operatorname{fin}}$ is the map $\epsilon\mapsto \sum_{v\in\Ereg\setminus R}\epsilon(v)m^\beta_v$.
\end{theorem}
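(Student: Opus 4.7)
My plan is to analyze an arbitrary $m\in D^\beta$ through the point masses of its associated measure $\mu$, and then exploit the combinatorics of first returns to a vertex. By Lemma~\ref{lemma:hovedto}, $\mu(Z(u)\cap\partial_RE)=N(u)^{-\beta}m(r(u))$ for every $u\in E^*$. Writing $\{u\}=(Z(u)\cap\partial_RE)\setminus\bigsqcup_{e\in r(u)E^1}(Z(ue)\cap\partial_RE)$ and using that the finite measure $\mu$ assigns positive mass to at most countably many of these cylinders (while the remaining ones contribute $0$ in the supremum-over-finite-subsets sense of \eqref{item:m3}), $\sigma$-additivity gives
\[
\mu(\{u\})=N(u)^{-\beta}c(r(u)),\qquad c(v):=m(v)-\sum_{e\in vE^1}N(e)^{-\beta}m(r(e)).
\]
By \eqref{item:m2}--\eqref{item:m3} we have $c(v)\geq 0$ and $c(v)=0$ for $v\in R$. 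Grouping singletons by terminal vertex yields
\[
\mu(E^*\cap\partial_RE)=\sum_{v\in E^0\setminus R}c(v)\,T(v),\qquad T(v):=\sum_{u\in E^*v}N(u)^{-\beta}.
\]

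The combinatorial core is to compute $T(v)$. The unique first-return factorization $E^*v=\Eva\cdot vE^*v$ together with the free-monoid decomposition of $vE^*v$ into simple loops gives $T(v)=\Zva(\beta)\sum_{n\geq 0}\Zvav(\beta)^n$, equal to $\Zva(\beta)/(1-\Zvav(\beta))$ when $v\in\Ereg$ and $+\infty$ otherwise. Hence $m\in\Cf$ (i.e.\ $\mu(E^*\cap\partial_RE)=1$) forces $c(v)=0$ for every $v\notin\Ereg\setminus R$, and then $\sum_{v\in\Ereg\setminus R}c(v)\Zva(\beta)/(1-\Zvav(\beta))=1$. Consequently $W_{\operatorname{fin}}(m)$ is a probability vector on $\Ereg\setminus R$, and since $c$ is affine in $m$, so is $W_{\operatorname{fin}}$. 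To exhibit an inverse I would next check that $m_v^\beta\in\Cf$ and $W_{\operatorname{fin}}(m_v^\beta)=\delta_v$ for each $v\in\Ereg\setminus R$. Property \eqref{item:m1} is immediate from $\Eva=\bigsqcup_{v'\in E^0}\Es{v'}{v}$. For \eqref{item:m2}--\eqref{item:m3} at $v'\neq v$, every $u\in\Es{v'}{v}$ has positive length and factors uniquely as $u=eu''$ with $e\in v'E^1$ and $u''\in\Es{r(e)}{v}$, yielding the equality $m_v^\beta(v')=\sum_{e\in v'E^1}N(e)^{-\beta}m_v^\beta(r(e))$. At $v'=v$ one has $\Es{v}{v}=\{v\}$ while the analogous bijection $\Evav\cong\bigsqcup_{e\in vE^1}\{e\}\times\Es{r(e)}{v}$ produces
\[
m_v^\beta(v)-\sum_{e\in vE^1}N(e)^{-\beta}m_v^\beta(r(e))=\frac{1-\Zvav(\beta)}{\Zva(\beta)}\geq 0,
\]
for which only the non-negativity \eqref{item:m3} is required because $v\notin R$. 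The same identity shows $W_{\operatorname{fin}}(m_v^\beta)=\delta_v$, and by affinity $W_{\operatorname{fin}}\bigl(\sum_v\epsilon(v)m_v^\beta\bigr)=\epsilon$ for every probability vector $\epsilon$ on $\Ereg\setminus R$.

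For injectivity of $W_{\operatorname{fin}}$, given $m\in\Cf$ and $\epsilon=W_{\operatorname{fin}}(m)$ one has $c(w)=\epsilon(w)(1-Z_w^s(\beta))/Z_w^a(\beta)$ for $w\in\Ereg\setminus R$ and $c$ vanishes elsewhere, so the singleton formula combined with $v'E^*w=\Es{v'}{w}\cdot wE^*w$ and the loop series give
\[
m(v')=\mu(Z(v')\cap\partial_RE)=\sum_{w\in\Ereg\setminus R}c(w)\sum_{u\in v'E^*w}N(u)^{-\beta}=\sum_{w\in\Ereg\setminus R}\epsilon(w)m_w^\beta(v').
\]
Since elements of $D^\beta$ are determined by their restriction to $E^0$ via Lemma~\ref{lemma:hovedto}, this gives $m=\sum_w\epsilon(w)m_w^\beta$, completing the convex isomorphism. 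The most delicate point is the careful handling of possibly uncountable $vE^1$ throughout: every sum over $vE^1$ must be read as the supremum over finite subsets in the sense of \eqref{item:m3}, which is nevertheless compatible with $\sigma$-additivity because the finite measure $\mu$ can assign positive mass to only countably many of the cylinders $Z(ue)\cap\partial_RE$.
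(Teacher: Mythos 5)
Your proposal is correct and follows essentially the same route as the paper's proof: both rest on the point-mass identity $\mu(\{u\})=N(u)^{-\beta}S(m)(r(u))$, the first-return factorization giving $Z_v(\beta)=\Zva(\beta)/(1-\Zvav(\beta))$ for $v\in\Ereg$ (and $=\infty$ otherwise), and the functions $m^\beta_v$ with $S(m^\beta_v)=\frac{1-\Zvav(\beta)}{\Zva(\beta)}\delta_v$ (the paper's Lemma~\ref{lem:aboutS}, Proposition~\ref{prop:E} and Proposition~\ref{prop:m}), your only real deviations being the pleasant shortcut that deduces $c(v)=0$ off $\Ereg\setminus R$ directly from $c(v)T(v)\le 1$ and $T(v)=\infty$, and the direct verification of the inverse formula in place of the paper's injectivity argument. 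The one step you announce but do not actually carry out is that $m^\beta_v$ is of \emph{finite} type rather than merely in $D^\beta$; this follows in one line from your own identity $\mu(E^*\cap\partial_RE)=\sum_{v'}c(v')T(v')$ evaluated at $c=S(m^\beta_v)$, so make it explicit.
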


The proof of this theorem will require some preparation in the form of a series of preliminary results.

It will be convenient to have a notation for the function on $E^0$ appearing in the right-hand side of \eqref{eqdef:W-fin}. Therefore, for $m:E^0\to [0,1]$ satisfying \eqref{item:m3}, we let $S(m)$ be the function from $E^0$ to $[0,1]$ given by
\begin{equation*}
	S(m)(v)= m(v)-\sum_{e\in vE^1} N(e)^{-\beta}m(r(e)), \text{ for }v\in E^0.
\end{equation*}
Notice that $m$ satisfies \eqref{item:m2} if and only if $S(m)(v)=0$ for all $v\in R$, and that $m$ is an eigenvector with eigenvalue 1 of the matrix $(\sum_{e\in v'E^1v}N(e))_{v',v\in E^0}$ if and only if $S(m)(v)=0$ for all $v\in E^0$.

Some properties of this function $S$ are collected in the following lemma.

\begin{lemma}\label{lem:aboutS}
Let $m\in D^\beta$ and let $\mu\in C^\beta$ be the measure associated to $m$.

\textnormal{(a)} We have $\mu(\{v\})=S(m)(v)$ for all $v\in E^0\setminus R$.

\textnormal{(b)} We have $S(m)(v)=0$ for all $v\in E^0$ if and only if $m\in \Cinf$.
\end{lemma}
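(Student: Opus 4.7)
The proof rests on two ingredients: the scaling relation for $\mu$, which gives $\mu(Z(e) \cap \partial_R E) = N(e)^{-\beta} m(r(e))$ via $Z(e) \cap \partial_R E = \phi_e(Z(r(e)) \cap \partial_R E)$, and the disjoint set-theoretic decomposition
\[ Z(v) \cap \partial_R E \;=\; (\{v\} \cap \partial_R E) \,\sqcup\, \bigsqcup_{e \in vE^1} Z(e) \cap \partial_R E, \]
where $\{v\} \cap \partial_R E$ equals $\{v\}$ if $v \notin R$ and is empty otherwise. Countable additivity then delivers the master identity
\[ m(v) \;=\; \mu(\{v\} \cap \partial_R E) + \sum_{e \in vE^1} N(e)^{-\beta} m(r(e)), \qquad v \in E^0, \]
and part (a) is immediate upon isolating $\mu(\{v\})$ for $v \in E^0 \setminus R$.

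For (b), the implication $\Rightarrow$ is straightforward: if $\mu$ is concentrated on $E^\infty$ then $\mu(\{v\}) = 0$ for every $v \in E^0 \setminus R$, forcing $S(m)(v) = 0$ by (a), while $S(m)(v) = 0$ for $v \in R$ is just property \eqref{item:m2}. For the converse, suppose $S(m) \equiv 0$. By (a) (and the trivial observation that $\{v\} \cap \partial_R E = \emptyset$ for $v \in R$) we get $\mu(\{v\} \cap \partial_R E) = 0$ for every $v \in E^0$, and the identity $\mu(\{u\}) = N(u)^{-\beta} \mu(\{r(u)\})$, obtained by iterating the single-edge scaling along $\phi_u$, propagates this to $\mu(\{u\} \cap \partial_R E) = 0$ for every $u \in E^*$.

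Iterating the master identity $n+1$ times then yields
\[ m(v) \;=\; \sum_{k=0}^{n}\sum_{u \in vE^k} \mu(\{u\} \cap \partial_R E) + \mu\Bigl(\bigsqcup_{u \in vE^{n+1}} Z(u) \cap \partial_R E\Bigr), \]
and under the hypothesis the first sum vanishes, so $m(v) = \mu(A_n^v)$ where $A_n^v$ is the set of paths in $Z(v) \cap \partial_R E$ of length at least $n+1$. The sets $A_n^v$ decrease in $n$ to $\{x \in E^\infty : s(x) = v\}$, so continuity from above of the finite measure $\mu$ gives $\mu(\{x \in E^\infty : s(x) = v\}) = m(v)$; summing over $v \in E^0$ via $E^\infty = \bigsqcup_v \{x \in E^\infty : s(x) = v\}$ and property \eqref{item:m1} yields $\mu(E^\infty) = 1$, i.e., $m \in \Cinf$. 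I expect this iteration-plus-limit step to be the crux of the argument. A mild technical annoyance is that when $v$ is an infinite emitter the various sums are over a possibly uncountable index set; however, finiteness of $\mu$ combined with inner regularity (a compact subset of an uncountable union of open sets is covered by finitely many of them) forces at most countably many summands to be nonzero, so $\sigma$-additivity is never violated.
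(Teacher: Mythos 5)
Your proof is correct, and parts (a) and the implication $m\in D^\beta_{\operatorname{inf}}\Rightarrow S(m)\equiv 0$ follow exactly the paper's route: the disjoint decomposition of $Z(v)\cap\partial_R E$ into $\{v\}\cap\partial_R E$ and the sets $Z(e)\cap\partial_R E$, $e\in vE^1$, together with the scaling relation. The only place you diverge is the converse implication in (b). There the paper, after propagating $\mu(\{v\})=0$ to $\mu(\{u\})=0$ for all $u\in E^*$ with $r(u)\notin R$ exactly as you do, simply writes $\mu(E^\infty)=\mu\bigl(\partial_R E\setminus\{u\in E^*\mid r(u)\notin R\}\bigr)=\mu(\partial_R E)=1$, i.e.\ it asserts directly that the set of finite boundary paths is $\mu$-null because each of its points is. Your alternative --- iterating the master identity to exhibit $m(v)$ as $\mu(A^v_n)$ for the decreasing open sets $A^v_n$ and then invoking continuity from above --- reaches the same conclusion by a somewhat longer path, but it has the virtue of making explicit the point the paper glosses over: when $E$ is uncountable, or when finite paths fail to be isolated (infinite emitters), passing from ``every singleton is null'' to ``the whole finite-path set is null'' does require the regularity/compactness argument you sketch at the end. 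Both arguments are valid; yours is more self-contained on this measure-theoretic point.
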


\begin{proof} The regularity of $\mu$ implies that
	\begin{align*}
		\mu(\{v\})&=\mu\biggl(\cyl{v}\setminus\bigcup_{e\in vE^1}\cyl{e}\biggr)\\
		&=\mu(\cyl{v}\cap\partial_RE)-\sum_{e\in vE^1}\mu(\cyl{e}\cap\partial_RE)\\
		&=m(v)-\sum_{e\in vE^1}N(e)^{-\beta}m(r(e))\\
		&=S(m)(v),
	\end{align*}
as claimed in (a).

To prove (b), assume first that $S(m)(v)=0$ for all $v\in E^0$. By (a), $\mu(\{v\})=0$ for all $v\in E^0\setminus R$. Hence by the scaling condition in $C^\beta$, $\mu(\{u\})=N(u)^{-\beta}\mu(\{r(u)\})=0$ for all $u\in E^*$ with $r(u)\notin R$.  Thus,
$$
\mu(E^\infty)=\mu(\partial_R E\setminus \{u\in E^*\mid r(u)\notin R\})=\mu(\partial_R E)=1.
$$
Conversely, if $m\in \Cinf$, then $S(m)(v)=\mu(\{v\})=0$ for $v\in E^0\setminus R$ by (a), and $S(m)(v)=0$ for $v\in R$ since $m$ satisfies \eqref{item:m2}.
\end{proof}

It follows from Lemma~\ref{lem:aboutS} (b) that $D^\beta_{inf}$ is
the set of normalized eigenvectors with eigenvalue 1 of the matrix
$(\sum_{e\in v' E^1 v} N(e))_{v',v\in E^0}$ (and to the normalized
eigenvectors with eigenvalue $\exp(\beta)$ of the adjacency matrix of
$E$ if $N(e)=\exp(1)$ for all $e\in E^1$).

 For $v\in E^0$ define a partition function
\begin{equation}\label{eq:Zv}
Z_v(\beta)=\sum_{u\in E^*v}N(u)^{-\beta}.
\end{equation}
Similar to the terminology used in \cite{MR1953065} we call $Z_v(\beta)$ the partition function with fixed-target $v$. Clearly  $\beta_1\le\beta_2$ implies $Z_v(\beta_2)\le Z_v(\beta_1)$. Thus, if $Z_v(\beta)$ is convergent, then $Z_v(\beta')$ is convergent for all $\beta'\ge \beta$.

It will be useful to know that the map $(u_0,u_1,\dots,u_n)\mapsto u_0u_1\dotsm u_n$ is a bijection
\begin{equation}\label{eq:bijection1}
\Eva\times\bigcup_{n=0}^\infty(\Evav)^n\to E^*v,
\end{equation}
where $(\Evav)^0=\{v\}$.

\begin{proposition} \label{prop:E} Let $\beta\in [0,\infty)$. The following hold:
	\mbox{ }
	\begin{enumerate}
		\item \label{item:a1} $Z_v(\beta)=\Zva(\beta)\Bigl(1+\sum_{n=1}^\infty(\Zvav(\beta))^n\Bigr)$ for any $v\in E^0$.
		\item \label{item:a2} $\Ereg=\{v\in E^0\mid Z_v(\beta)<\infty\}$.
		\item \label{item:a3.5} Let $v\in E^0$ and $m\in D^\beta$. If $m(v)\ne 0$, then $\Zva(\beta)\le \frac{1}{m(v)}$.
		\item \label{item:a4} Let $v\in E^0$. If there exists an $m\in D^\beta$ such that $m(v)\ne 0$, then $v\in\Eequ$.
	\end{enumerate}
\end{proposition}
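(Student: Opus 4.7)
The plan is to handle the four items essentially in the order given, as each builds on the previous one, with parts (3) and (4) relying on the measure-theoretic translation via Propositions~\ref{prop:int} and \ref{prop:function}.

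For \eqref{item:a1}, I would simply sum $N(u)^{-\beta}$ over $u\in E^*v$ using the bijection \eqref{eq:bijection1} together with the multiplicativity $N(u_0u_1\dotsm u_n)=N(u_0)N(u_1)\dotsm N(u_n)$. The sum factors as a product of the sum over the first factor $u_0\in\Eva$ and the sum over $\bigsqcup_n (\Evav)^n$, and the latter is a geometric-type series in $\Zvav(\beta)$. Absolute convergence (or the convention $\infty\cdot 0=0$, which is irrelevant here since $\Zva(\beta)\ge 1$) justifies the factorization. For \eqref{item:a2}, I would combine \eqref{item:a1} with the observation that $\Zva(\beta)\ge N(v)^{-\beta}=1$, so $Z_v(\beta)<\infty$ forces both $\Zva(\beta)<\infty$ and $\sum_{n\ge 1}\Zvav(\beta)^n<\infty$, the latter being equivalent to $\Zvav(\beta)<1$. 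Conversely, $v\in\Ereg$ gives $Z_v(\beta)=\Zva(\beta)/(1-\Zvav(\beta))<\infty$.

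For \eqref{item:a3.5}, let $\mu$ be the measure in $C^\beta$ associated to $m$. The scaling condition for $\Phi$ iterated along the factorization $u=e_1\dotsm e_n$ yields
\[
\mu(\cyl{u}\cap\partial_RE)=N(u)^{-\beta}m(r(u)) \quad \text{for every } u\in E^*.
\]
The crucial combinatorial point is that the cylinder sets $\{\cyl{u}\cap\partial_RE\}_{u\in\Eva}$ are pairwise disjoint: if two elements $u,u'$ of $\Eva$ satisfied $u\le u'$, then $u$ would be a proper prefix of $u'$ with $r(u)=v$, contradicting the definition of $\Eva$. Therefore
\[
m(v)\,\Zva(\beta)=\sum_{u\in\Eva}N(u)^{-\beta}m(v)=\mu\Bigl(\bigsqcup_{u\in\Eva}\cyl{u}\cap\partial_RE\Bigr)\le\mu(\partial_RE)=1,
\]
which gives \eqref{item:a3.5} after dividing by $m(v)$.

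For \eqref{item:a4}, it remains to show $\Zvav(\beta)\le 1$. The same disjointness argument applies to $\{\cyl{u}\cap\partial_RE\}_{u\in\Evav}$ (if $u<u'$ were both in $\Evav$, then $u$ would be a subpath of $u'$ with $v<u<u'$ and $r(u)=v$, contradicting the definition of $\Evav$). Moreover each such $\cyl{u}$ sits inside $\cyl{v}$ because $s(u)=v$. Using $\mu(\cyl{u}\cap\partial_RE)=N(u)^{-\beta}m(v)$ once more,
\[
m(v)\,\Zvav(\beta)=\sum_{u\in\Evav}N(u)^{-\beta}m(v)\le\mu(\cyl{v}\cap\partial_RE)=m(v),
\]
so $\Zvav(\beta)\le 1$; combined with \eqref{item:a3.5} this places $v$ in $\Eequ$. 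The only subtle point throughout is verifying the pairwise disjointness of the cylinders indexed by $\Eva$ and $\Evav$, but that is exactly what the ``no proper subpath ends at $v$'' clauses in those definitions were designed to ensure.
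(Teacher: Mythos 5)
Your proof is correct and follows essentially the same route as the paper's: parts (1)--(2) via the bijection \eqref{eq:bijection1} and the geometric series, and parts (3)--(4) via the pairwise disjointness of the cylinders indexed by $\Eva$ and $\Evav$ together with the scaling condition (the paper packages this through the state $\omega\in B^\beta$ and an approximate-unit argument rather than through the measure $\mu\in C^\beta$, but the combinatorial content is identical). The only point to watch is that $\Eva$ may be uncountable, so the middle equality $\sum_{u\in\Eva}\mu(\cyl{u}\cap\partial_RE)=\mu\bigl(\bigsqcup_{u\in\Eva}\cyl{u}\cap\partial_RE\bigr)$ in your display should really be the inequality $\le$ (the sum being the supremum of finite partial sums), which is all your argument needs.
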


\begin{proof}
Assertion \eqref{item:a1} follows directly from  \eqref{eq:bijection1}, and assertion \eqref{item:a2} follows directly from \eqref{item:a1}.
	
We next prove	\eqref{item:a3.5}. Suppose that $m(v)\ne 0$. Since $m\in D^\beta$, Proposition \ref{prop:function} gives a unique $\omega\in B^\beta$ such that $\omega(\cha{\cyl{v'}\cap\partial_RE})=m(v')$ for all $v'\in E^0$. We let $\psi$ be the element in $A^\beta$ corresponding to $\omega$ under the isomorphism of Proposition~\ref{prop:state}. If $u_1,u_2\in\Eva$ and $u_1\ne u_2$, then $\cyl{u_1}\cap\cyl{u_2}=\emptyset$. We claim that
\begin{equation}\label{eq:sum-less-one}
\sum_{u\in\Eva} \omega(\cha{\cyl{u}\cap\partial_R E})\leq 1.
\end{equation}
To see this, use that $\sqcup_{u\in \Eva}Z(u)\subseteq \sqcup_{v'\in J} Z(v')$, where $J=\{s(u)\mid u\in \Eva\}$, to bound the left hand side of \eqref{eq:sum-less-one} by $\sum_{v'\in J} \overline{\psi}(\cha{Z(v')})$, with $\overline{\psi}$ denoting the state extension of $\psi\vert_{C_0(\delta_R E)}$ to $C_0(E^{\leq \infty})$. The fact that the
net $(\sum_{v''\in F}\cha{Z(v'')})$ indexed over finite subsets of $E^0$ forms an approximate unit for $C_0(E^{\leq \infty})$ then gives \eqref{eq:sum-less-one}. The scaling condition in $B^\beta$ therefore implies that
$$\sum_{u\in\Eva}N(u)^{-\beta}m(v)\le 1,$$
and thus $\Zva(\beta)=\sum_{u\in\Eva}N(u)^{-\beta}\le \frac{1}{m(v)}$.

Finally, to prove \eqref{item:a4}, assume that $m\in D^\beta$ and $m(v)\ne 0$. Let $\omega\in B^\beta$ be as above. If $u_1,u_2\in\Evav$ and $u_1\ne u_2$, then $Z(u_1)\cap Z(u_2)=\emptyset$, hence $\sqcup_{u\in\Evav}Z(u)\subseteq Z(v)$. It follows from the scaling condition in $B^\beta$ that
	$$\sum_{u\in\Evav}N(u)^{-\beta}\omega(\cha{\cyl{v}\cap\partial_RE})\le\omega(\cha{\cyl{v}\cap\partial_RE}).$$
	Thus, since $\omega(\cha{\cyl{v}\cap\partial_RE})=m(v)\ne 0$, we get that $\Zvav(\beta)=\sum_{u\in\Evav}N(u)^{-\beta}\le 1$. Since $\Zva(\beta)<\infty$ by \eqref{item:a3.5}, we have that $v\in\Eequ$.
\end{proof}

\begin{lemma}\label{lem:mineq}
	Let $m\in D^\beta$ and $v_1,v_2\in E^0$. Then $m(v_2)\ge \sum_{u\in\Es{v_2}{v_1}}N(u)^{-\beta}m(v_1)$.
\end{lemma}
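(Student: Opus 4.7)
The plan is to work on the measure side. Given $m\in D^\beta$, let $\mu\in C^\beta$ be the measure associated to $m$ via Propositions~\ref{prop:int} and \ref{prop:function}, so that $\mu(Z(v')\cap\partial_R E)=m(v')$ for all $v'\in E^0$. The desired inequality then amounts to the statement
\begin{equation*}
\mu(Z(v_2)\cap\partial_R E)\ge \sum_{u\in\Es{v_2}{v_1}}\mu(Z(u)\cap\partial_R E),
\end{equation*}
once we know $\mu(Z(u)\cap\partial_R E)=N(u)^{-\beta}m(v_1)$ for each $u\in\Es{v_2}{v_1}$. This reduces the proof to two ingredients: pairwise disjointness of the cylinders indexed by $\Es{v_2}{v_1}$, and the scaling relation for $\mu$ along finite paths.

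For the disjointness, I would argue that the defining condition on $\Es{v_2}{v_1}$ forces $u\le u'$ to fail for distinct $u,u'\in\Es{v_2}{v_1}$: if $u<u'$, then $u$ would be a proper subpath of $u'$ with $r(u)=v_1$, contradicting the requirement that no proper subpath of $u'$ has range $v_1$. Since cylinders $Z(u)$ and $Z(u')$ are either nested (when one path is a prefix of the other) or disjoint, this yields $Z(u)\cap Z(u')=\emptyset$ for all distinct $u,u'\in\Es{v_2}{v_1}$. Moreover $s(u)=v_2$ gives $Z(u)\subseteq Z(v_2)$, so the disjoint union $\bigsqcup_{u\in \Es{v_2}{v_1}} Z(u)\cap\partial_R E$ sits inside $Z(v_2)\cap\partial_R E$ and countable additivity of $\mu$ delivers the inequality.

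For the scaling, I would iterate the defining relation of $C^\beta$ along $u=e_1\dotsm e_n$: by Lemma~\ref{lem:alternative-U-phi}\eqref{item:g2} we have $Z(u)\cap\partial_R E=\pa_u(Z(r(u))\cap\partial_R E)$, and repeated application of the one-step scaling condition $\mu(\pa_e(A))=N(e)^{-\beta}\mu(A)$ (together with Proposition~\ref{prop:action-on-path-space}\eqref{item:g7}) gives $\mu(Z(u)\cap\partial_R E)=N(u)^{-\beta}\mu(Z(r(u))\cap\partial_R E)=N(u)^{-\beta}m(v_1)$ since $r(u)=v_1$.

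I do not expect a serious obstacle here: the only subtle point is the disjointness of the cylinders, but it is a direct consequence of the "no proper subpath with range $v_1$" condition built into $\Es{v_2}{v_1}$, and everything else is bookkeeping with the scaling property of measures in $C^\beta$.
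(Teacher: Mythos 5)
Your proposal is correct and follows essentially the same route as the paper: the paper's proof also passes to the associated measure $\mu\in C^\beta$, uses the scaling relation to get $\mu(\cyl{u}\cap\partial_R E)=N(u)^{-\beta}m(v_1)$, and observes that the cylinders $\cyl{u}$, $u\in\Es{v_2}{v_1}$, are pairwise disjoint subsets of $\cyl{v_2}$ (your explicit justification of the disjointness from the ``no proper subpath with range $v_1$'' condition is exactly the reason the paper leaves implicit). No differences worth noting.
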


\begin{proof}
If $\Es{v_2}{v_1}=\emptyset$ there is nothing to prove. Assume $\Es{v_2}{v_1}\neq\emptyset$.	Let $\mu$ be the measure associated to
$m$ as given by \eqref{eq:convention-mu-m}. The scaling condition in $C^\beta$ implies that
$$
\mu(\{ux\mid x\in\cyl{r(u)}\}\cap\partial_RE)=N(u)^{-\beta}\mu(Z(r(u))\cap\partial_R E)=N(u)^{-\beta}m(r(u))
$$
 for any $u\in E^*$.
	
	If $u_1,u_2\in\Es{v_2}{v_1}$ and $u_1\ne u_2$, then $\{u_1x\mid x\in\cyl{v_1}\}$ and $\{u_2x\mid x\in\cyl{v_1}\}$ are two disjoint subsets of $\cyl{v_2}$. Hence
	\begin{align*}
		m(v_2)&=\mu(\cyl{v_2}\cap\partial_RE)\ge \sum_{u\in\Es{v_2}{v_1}}\mu(\{ux\mid x\in\cyl{v_1}\}\cap\partial_RE)\\ &=\sum_{u\in\Es{v_2}{v_1}}N(u)^{-\beta}m(v_1).
	\end{align*}
\end{proof}

For later use, we record the following fact.

\begin{lemma}\label{lem:moreaboutS}
Let $\beta\in [0,\infty)$. Then $S(m)(v)\le m(v)(1-\Zvav(\beta))$ for any $m\in D^\beta$ and $v\in \Eequ$. In particular,
$S(m)(v)=0$ for $v\in \Ecrit$.
\end{lemma}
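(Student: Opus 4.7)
The plan is to exploit the measure $\mu\in C^\beta$ associated to $m$ via \eqref{eq:convention-mu-m} together with a first-return decomposition of $\cyl{v}\cap\partial_RE$. First I would observe that the singleton $\{v\}$ and the cylinders $\{\cyl{u}\}_{u\in\Evav}$ are pairwise disjoint Borel subsets of $\cyl{v}$: two distinct simple loops $u_1,u_2\in\Evav$ cannot satisfy $u_1<u_2$, since then $u_1$ would be a proper subpath of $u_2$ with range $v$, contradicting the definition of $\Evav$.

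Next I would evaluate the $\mu$-measures of these pieces. For $u\in\Evav$, the scaling condition in $C^\beta$ together with $r(u)=v$ yields $\mu(\cyl{u}\cap\partial_RE)=N(u)^{-\beta}m(v)$. For the singleton piece, Lemma~\ref{lem:aboutS}(a) gives $\mu(\{v\})=S(m)(v)$ when $v\notin R$, while if $v\in R$ both sides vanish (the left because $v\notin\partial_RE$, the right by \eqref{item:m2}); so $\mu(\{v\}\cap\partial_RE)=S(m)(v)$ unconditionally. By finite additivity of $\mu$, for every finite $F\subseteq\Evav$ we then have
\begin{align*}
S(m)(v)+\sum_{u\in F}N(u)^{-\beta}m(v)
&=\mu\Bigl(\bigl(\{v\}\cap\partial_RE\bigr)\sqcup\bigsqcup_{u\in F}\bigl(\cyl{u}\cap\partial_RE\bigr)\Bigr)\\
&\le\mu(\cyl{v}\cap\partial_RE)=m(v).
\end{align*}
Taking the supremum over finite $F\subseteq\Evav$ and rearranging gives $S(m)(v)\le m(v)(1-\Zvav(\beta))$, which is the first claim; the right-hand side is well defined because $\Zvav(\beta)\le 1$ for $v\in\Eequ$.

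For the second claim, specializing to $v\in\Ecrit$ makes $\Zvav(\beta)=1$, so the bound becomes $S(m)(v)\le 0$. Condition \eqref{item:m3} applied to arbitrary finite subsets of $vE^1$ gives the reverse inequality $S(m)(v)\ge 0$; hence $S(m)(v)=0$. I expect no real obstacle beyond the minor bookkeeping in the case $v\in R$, where Lemma~\ref{lem:aboutS}(a) does not directly apply but the identity $\mu(\{v\}\cap\partial_RE)=S(m)(v)$ follows trivially from \eqref{item:m2}.
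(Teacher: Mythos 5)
Your proof is correct. It reaches the same inequality as the paper but organizes the disjointness argument differently: the paper applies Lemma~\ref{lem:mineq} with $v_1=v$ and $v_2=r(e)$ for each $e\in vE^1$, sums these estimates with weights $N(e)^{-\beta}$, and uses the bijection between $\bigsqcup_{e\in vE^1}\Es{r(e)}{v}$ and $\Evav$ to get $\sum_{e\in vE^1}N(e)^{-\beta}m(r(e))\ge m(v)\Zvav(\beta)$, from which the bound on $S(m)(v)$ follows by the definition of $S$; it never needs to evaluate $\mu(\{v\})$. You instead decompose $\cyl{v}\cap\partial_RE$ directly into the point mass at $v$ together with the pairwise disjoint first-return cylinders $\cyl{u}$, $u\in\Evav$, and invoke Lemma~\ref{lem:aboutS}(a) to identify $\mu(\{v\}\cap\partial_RE)$ with $S(m)(v)$ (with the correct, if slightly fussy, treatment of the case $v\in R$). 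The two routes are equivalent in substance — both rest on the disjointness of the cylinders over simple loops and the scaling identity $\mu(\cyl{u}\cap\partial_RE)=N(u)^{-\beta}m(r(u))$ — but yours makes the first-return decomposition of $\cyl{v}$ and the role of the atom at $v$ more explicit, at the cost of the extra case analysis for $v\in R$, while the paper's is marginally shorter because Lemma~\ref{lem:mineq} has already packaged the needed estimate. Your handling of the endgame ($S(m)(v)\ge 0$ from \eqref{item:m3}, hence $S(m)(v)=0$ on $\Ecrit$) matches what the paper leaves implicit.
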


\begin{proof} Let $m\in D^\beta$ and $v\in \Eequ$. Applying Lemma~\ref{lem:mineq} with $v_1=v$ and $v_2=r(e)$ for all $e\in vE^1$ gives that
$$
\sum_{e\in vE^1}N(e)^{-\beta}m(r(e))\geq \sum_{e\in vE^1} N(e)^{-\beta}\Biggl(\sum_{u\in \Es{r(e)}{v}}N(u)^{-\beta}m(v)\Biggr).
$$
Since $(e,u)\to ue$ implements a bijection between $\{(e,u)\mid e\in vE^1\times,\ u\in \Es{r(e)}{v}\}$ and $\Evav$, this inequality shows that
$\sum_{e\in vE^1}N(e)^{-\beta}m(r(e))\geq m(v)\sum_{u'\in \Evav}N(u')^{-\beta}=m(v)\Zvav(\beta)$. The first claim thus follows, and it implies the second claim by \eqref{eq:Ecrit}.
\end{proof}

\begin{proposition} \label{prop:m}
	\textnormal{(a)} For each $v\in\Eequ$, the function $m_v^\beta$ satisfies \eqref{item:m1} and \eqref{item:m3}.  Further,
 $$
S(m^\beta_v)(v')=\begin{cases}\frac{1-\Zvav(\beta)}{\Zva(\beta)} &\text{ if }v'=v\\ 0&\text{ if }v'\ne v.
\end{cases}$$

\textnormal{(b)}  $m^\beta_v\in D^\beta$ if and only if $v\in\Ecrit$ or $v\in\Ereg\setminus R$.

\textnormal{(c)}  $m^\beta_v\in\Cf$ if and only if $v\in\Ereg\setminus R$.

\textnormal{(d)}  $m^\beta_v\in\Cinf$ if and only if $v\in\Ecrit$.

\end{proposition}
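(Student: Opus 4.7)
The plan is to prove (a) by explicit computation and then derive (b), (c), (d) from (a) via Lemma~\ref{lem:aboutS} and Proposition~\ref{prop:E}\eqref{item:a1}. The central combinatorial tool will be the partition $\Eva=\bigsqcup_{v'\in E^0}\Es{v'}{v}$ by source together with the concatenation bijection $(e,u)\mapsto eu$ from $\{(e,u)\mid e\in v'E^1,\ u\in\Es{r(e)}{v}\}$, obtained by reading off the first edge of a return path.

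For (a), condition \eqref{item:m1} is immediate from the partition. To compute $S(m^\beta_v)(v')$, I would observe that the concatenation map is a bijection onto $\Es{v'}{v}\setminus\{v\}$: any path in $\Es{v'}{v}$ of length at least $1$ factors uniquely as $eu$ with $e$ its first edge, which then forces $u\in\Es{r(e)}{v}$. Since $v\in\Es{v'}{v}$ only when $v'=v$, the image equals $\Es{v'}{v}$ for $v'\neq v$ and equals $\Evav$ for $v'=v$. Summing $N(\cdot)^{-\beta}$ across this bijection yields
\[
S(m^\beta_v)(v')=\begin{cases}0,& v'\neq v,\\ (\Zva(\beta))^{-1}(1-\Zvav(\beta)),& v'=v,\end{cases}
\]
and since $v\in\Eequ$ forces $\Zvav(\beta)\leq 1$, this is non-negative, giving \eqref{item:m3}.

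For (b), the only obstruction to \eqref{item:m2} coming from (a) is at $v'=v$, and $S(m^\beta_v)(v)=0$ iff $\Zvav(\beta)=1$, i.e. $v\in\Ecrit$; hence $m^\beta_v\in D^\beta$ precisely when $v\notin R\cap\Ereg$, which rewrites as $v\in\Ecrit\cup(\Ereg\setminus R)$. For (c) and (d), I would combine the scaling condition defining $C^\beta$ with Lemma~\ref{lem:aboutS}(a) to get $\mu^\beta_v(\{u\})=N(u)^{-\beta}S(m^\beta_v)(r(u))$ for every $u\in E^*\cap\partial_R E$; by (a) only atoms with $r(u)=v$ are nonzero. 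Therefore when $v\notin R$,
\[
\mu^\beta_v(E^*\cap\partial_R E)=(\Zva(\beta))^{-1}(1-\Zvav(\beta))\,Z_v(\beta),
\]
while this mass is $0$ when $v\in R$. Proposition~\ref{prop:E}\eqref{item:a1} now gives mass $1$ when $v\in\Ereg\setminus R$, placing $m^\beta_v$ in $\Cf$; when $v\in\Ecrit$ instead, $S(m^\beta_v)\equiv 0$ together with Lemma~\ref{lem:aboutS}(b) places $m^\beta_v$ in $\Cinf$. The reverse implications in (c) and (d) follow from the disjointness $\Cf\cap\Cinf=\emptyset$ together with (b).

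The main obstacle will be the precise bookkeeping in the bijection underlying (a), where the trivial path $v$ plays an asymmetric role: it is the unique element of $\Es{v}{v}$ and also the unique element of that set missing from the image of the concatenation map. Once (a) is clean, the rest is essentially routine; a minor subtlety in expressing $\mu^\beta_v(E^*\cap\partial_R E)$ as a sum of atomic masses is that a probability measure has at most countably many atoms, so the a priori uncountable sum over $E^*v$ collapses to a well-defined countable one.
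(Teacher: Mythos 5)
Your argument is correct and follows the paper's proof essentially verbatim: part (a) via the source decomposition of $\Eva$ and the first-edge bijection, and parts (b)--(d) via Lemma~\ref{lem:aboutS}, the scaling identity for the associated measure, and Proposition~\ref{prop:E}\eqref{item:a1}. The only blemish is the phrase ``bijection onto $\Es{v'}{v}\setminus\{v\}$'', which is wrong when $v'=v$ (there the image is $\Evav$, which is disjoint from $\Es{v}{v}=\{v\}$ rather than contained in it); your subsequent case-by-case description of the image is the correct statement and is what your computation actually uses.
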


Note that when $R=\emptyset$, i.e. we are looking at the Toeplitz algebra of the graph, (b) shows that every element $v\in \Eequ$ defines a function
$m_v^\beta$ in $D^\beta$.

\begin{proof} Since $\Es{v}{v}=\{v\}$, we have $m^\beta_v(v)=\sum_{u\in\Es{v}{v}}N(u)^{-\beta}(\Zva(\beta))^{-1}=(\Zva(\beta))^{-1}$. Using the decomposition $\Evav=\bigsqcup_{e\in vE^1} \Es{r(e)}{v}$, it follows that
	\begin{align}
		\sum_{e\in vE^1}N(e)^{-\beta}m_v^\beta(r(e))&=\sum_{e\in vE^1}N(e)^{-\beta}\sum_{u\in\Es{r(e)}{v}}N(u)^{-\beta}(\Zva(\beta))^{-1}\notag\\
		&=\sum_{u'\in\Evav}N(u')^{-\beta}(\Zva(\beta))^{-1}.\label{eq:prove-m1-m3}
	\end{align}
Thus $\sum_{e\in vE^1}N(e)^{-\beta}m_v^\beta(r(e))={\Zvav}(\beta)m^\beta_v(v)$. Now the assumption that $v\in \Eequ$ implies that
$m_v^\beta(v)$ satisfies \eqref{item:m3}. By reorganizing terms we obtain $S(m^\beta_v)(v)=\frac{1-\Zvav(\beta)}{\Zva(\beta)}$. If $v'\ne v$, then $S(m^\beta_v)(v')=0$ follows from the calculations
	\begin{align*}
		\sum_{e\in v'E^1}N(e)^{-\beta}m_v^\beta(r(e))&=\sum_{e\in v'E^1}N(e)^{-\beta}\sum_{u\in\Es{r(e)}{v}}N(u)^{-\beta}(\Zva(\beta))^{-1}\\
		&=\sum_{u'\in\Es{v'}{v}}N(u')^{-\beta}(\Zva(\beta))^{-1}\\
&=m^\beta_v(v').
	\end{align*}
To finish the proof of (a) it remains to show that $m_v^\beta$ satisfies  \eqref{item:m1}. This follows from the decomposition $
\Eva=\bigsqcup_{v'\in E^0}\Es{v'}{v}$ and  the calculations
	\begin{align*}
		\sum_{v'\in E^0}m_v^\beta(v')&=\sum_{v'\in E^0}\sum_{u\in\Es{v'}{v}}N(u)^{-\beta}(\Zva(\beta))^{-1}\\
&=\sum_{u\in\Eva}N(u)^{-\beta}(\Zva(\beta))^{-1}=1.
	\end{align*}
In particular, we have that $m^\beta_v(v')\in [0,1]$.

For (b), note that (a) implies that $m^\beta_v$ satisfies \eqref{item:m2} if and only if $v\in\Ecrit$ or $v\in\Ereg\setminus R$.

For (c) and (d), let  $\mu^\beta_v\in C^\beta$ be the measure associated to $m_v^\beta$ as in \eqref{eq:convention-mu-m}. Recall that the scaling condition in $C^\beta$ gives that
$$
\mu^\beta_v(\cyl{u}\cap\partial_RE)=N(u)^{-\beta}m^\beta_v(r(u))$$
for all $u\in E^*$.
	
Suppose that $v\in\Ereg\setminus R$. By Lemma~\ref{lem:aboutS} and part (a),
$$
\mu^\beta_v(\{v\})=S(m_v^\beta)(v)=\frac{1-\Zvav(\beta)}{\Zva(\beta)}.
$$
Using the above scaling condition, $\mu^\beta_v(\{u\})=N(u)^{-\beta}\frac{1-\Zvav(\beta)}{\Zva(\beta)}$ for any $u\in E^*v$. By \eqref{eq:bijection1},
	\begin{align*}
		\mu^\beta_v(E^*v)&=\sum_{u\in E^*v}\mu^\beta_v(\{u\})=\sum_{u\in E^*v}N(u)^{-\beta}\frac{1-\Zvav(\beta)}{\Zva(\beta)}\\
		 &=\Biggl(\sum_{u\in\Eva}N(u)^{-\beta}\Biggr)\Biggl(\sum_{n=0}^\infty(\Zvav(\beta))^n\Biggr)\frac{1-\Zvav(\beta)}{\Zva(\beta)}=1.
	\end{align*}
	Thus $\mu^\beta_v\in \Cf$ when $v\in\Ereg\setminus R$.
	
	Suppose next that $v\in\Ecrit$. Lemma~\ref{lem:aboutS}(a) and part (a) imply that
	\begin{equation*}
		\mu^\beta_v(\{v'\})=S(m^\beta_v)(v')=0
	\end{equation*}
	for any $v'\in E^0\setminus R$. Since clearly $S(m_v^\beta)(v')=0$ for all $v'\in R$, Lemma~\ref{lem:aboutS}(b) shows that  $m^\beta_v$ is of infinite type, i.e. belongs to $\Cinf$.
\end{proof}

The next step in our analysis is a more detailed study of the structure of the sets $\Eequ$, $\Ereg$ and $\Ecrit$, and the functions $m^\beta_v$. Given two vertices $v_1,v_2\in E^0$, we introduce the notation
\begin{align*}
v_1\succ v_2 &\text{ if }v_1E^*\cap E^*v_2\ne\emptyset, \text{ and }\\
v_1\sim v_2 &\text{ if }v_1\succ v_2 \text{ and }v_2\succ v_1.
\end{align*}

\begin{proposition}\label{prop:sums}
	\mbox{ }
	\begin{enumerate}
		\item \label{ item:gg1} If $v_1\in\Ereg$ and $v_2\succ v_1$, then $v_2\in\Ereg$.
		\item \label{ item:gg2} If $v_1\in\Eequ$ and $v_2\succ v_1$, then $v_2\in\Eequ$.
		\item \label{ item:gg3} If $v_1\in\Ecrit$ and $v_2\sim v_1$, then $v_2\in\Ecrit$.
		\item \label{ item:gg4} If $v_1\in\Eequ$, $v_2\succ v_1$, and $v_1\not\succ v_2$, then $v_2\in\Ereg$.
	\end{enumerate}
\end{proposition}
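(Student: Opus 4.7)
My plan is to prove the four statements in order, leveraging the characterization $\Ereg=\{v\in E^0\mid Z_v(\beta)<\infty\}$ of Proposition~\ref{prop:E}\eqref{item:a2}, the disjoint decomposition $\Eequ=\Ereg\sqcup\Ecrit$, and the functions $m^\beta_v\in D^\beta$ from Definition~\ref{def:m-of-v}, which by Proposition~\ref{prop:m}(b) are available whenever $v\in\Ecrit$.

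For \eqref{ item:gg1}, I fix any $u_0\in v_2E^*v_1$ and observe that the concatenation map $u\mapsto uu_0$ injects $E^*v_2$ into $E^*v_1$, yielding $N(u_0)^{-\beta}Z_{v_2}(\beta)\le Z_{v_1}(\beta)<\infty$ and hence $v_2\in\Ereg$. For \eqref{ item:gg2}, I split on the dichotomy: if $v_1\in\Ereg$, then \eqref{ item:gg1} already gives $v_2\in\Ereg\subseteq\Eequ$; if $v_1\in\Ecrit$, then truncating any path from $v_2$ to $v_1$ at its first visit to $v_1$ exhibits a nonempty $\Es{v_2}{v_1}$, so $m^\beta_{v_1}(v_2)>0$, and Proposition~\ref{prop:E}\eqref{item:a4} forces $v_2\in\Eequ$. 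Part \eqref{ item:gg3} is then immediate: \eqref{ item:gg2} places $v_2$ in $\Eequ$, and if $v_2$ were in $\Ereg$, applying \eqref{ item:gg1} to $v_1\succ v_2$ would give $v_1\in\Ereg$, contradicting $v_1\in\Ecrit$.

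The substantive step is \eqref{ item:gg4}. By \eqref{ item:gg2} we have $v_2\in\Eequ$, so only $Z_{v_2}^s(\beta)<1$ remains to establish. Fix $u_0\in\Es{v_2}{v_1}$. The central combinatorial observation is that, since $v_1\not\succ v_2$ forces $v_1E^*v_2=\emptyset$, no loop in $v_2E^*v_2$ can visit $v_1$ in its interior: such a visit would produce a suffix lying in $v_1E^*v_2$. Together with the first-hit property of $u_0$, this makes $u\mapsto uu_0$ an injection of $v_2E^*v_2$ into $\Es{v_2}{v_1}$, and therefore
\begin{equation*}
N(u_0)^{-\beta}\sum_{u\in v_2E^*v_2}N(u)^{-\beta}\le Z_{v_1}^a(\beta)<\infty.
\end{equation*}
Every loop at $v_2$ factors uniquely as a concatenation of simple loops at $v_2$, so the left-hand sum equals $\sum_{n=0}^{\infty}\bigl(Z_{v_2}^s(\beta)\bigr)^n$, whose finiteness is equivalent to $Z_{v_2}^s(\beta)<1$. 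This yields $v_2\in\Ereg$. I expect the main obstacle to be exactly this combinatorial exclusion argument for \eqref{ item:gg4}; the first three parts are then just \eqref{ item:gg1} combined with the decomposition $\Eequ=\Ereg\sqcup\Ecrit$.
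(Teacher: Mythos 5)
Your proof is correct and follows essentially the same route as the paper: part (1) via the same concatenation bound $N(u_0)^{-\beta}Z_{v_2}(\beta)\le Z_{v_1}(\beta)$, parts (2)--(3) via the nonvanishing of $m^\beta_{v_1}(v_2)$ together with Proposition~\ref{prop:E}, and part (4) by appending loops at $v_2$ to a first-entrance path into $v_1$ (using $v_1\not\succ v_2$ to stay in $E_a^*v_1$) and summing the geometric series $\sum_{n}\bigl(Z^s_{v_2}(\beta)\bigr)^n$. The only cosmetic differences are that you case-split on $\Eequ=\Ereg\sqcup\Ecrit$ in (2) where the paper invokes the $R=\emptyset$ reduction, and you prove (4) directly by showing $Z^s_{v_2}(\beta)<1$ where the paper derives a contradiction from $Z^s_{v_2}(\beta)=1$.
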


\begin{proof}
Assertion	\eqref{ item:gg1} follows from the fact that $Z_{v_1}(\beta)\geq N(u)^{-\beta}Z_{v_2}$ for every $u\in v_2E^*v_1$.
	
	For \eqref{ item:gg2}, notice that if $v_1\in\Eequ$ and $v_2\succ v_1$, then $m_{v_1}^\beta(v_2)\ne 0$. Assuming first that $R=\emptyset$, it follows from Proposition \ref{prop:E}\eqref{item:a4}  that $v_2\in\Eequ$. Since the definition of
the set $\Eequ$ does not depend on $R$, the claim is true in general.
	
To prove	\eqref{ item:gg3}, suppose $v_1\in\Ecrit$ and $v_1\sim v_2$. Then \eqref{ item:gg2}  implies that $v_2\in\Eequ=\Ereg\cup\Ecrit$, and it follows from \eqref{ item:gg1} applied to $v_1\succ v_2$ that if $v_2\in\Ereg$, then $v_1\in\Ereg$. Since the latter is not the case, we must have that $v_2\in\Ecrit$.
	
Finally, for	\eqref{ item:gg4} suppose $v_1\in\Eequ$, $v_2\succ v_1$, and $v_1\not\succ v_2$. It then  follows from \eqref{ item:gg2} that $v_2\in\Eequ=\Ereg\cup\Ecrit$. Assume for contradiction that $v_2\in\Ecrit$. Since $v_2\succ v_1$, we have $\Es{v_2}{v_1}\ne\emptyset$. Choose $u\in \Es{v_2}{v_1}$. Let $n\ge 1$. For $u_1,u_2,\dots,u_n\in v_2E_s^*v_2$ we have that $u_1u_2\dotsm u_nu\in E_a^*v_1$ (since $v_1\not\succ v_2$). Hence
	$$Z_{v_1}^a(\beta)\ge \sum_{n=1}^\infty \Bigl(Z_{v_2}^s(\beta)\Bigr)^nN(u)^{-\beta}=\infty,$$
	which contradicts the assumption that $v_1\in\Eequ$. Thus, $v_2\in\Ereg$.
\end{proof}

\begin{proof}[Proof of Theorem~\ref{thm:finitetype-reg}.] We must first prove that $W_{\operatorname{fin}}$ is well-defined. Let $m\in\Cf$ and let $\mu\in C^\beta$ be the measure associated to $m$. Then
\begin{align}
\sum_{v\in \Ereg\setminus R}W_{\operatorname{fin}}(m)(v)
&=\sum_{v\in \Ereg\setminus R}\frac{S(m)(v)\Zva(\beta)}{1-\Zvav(\beta)}\notag\\
&=\sum_{v\in \Ereg\setminus R}\frac{\mu(\{v\})\Zva(\beta)}{1-\Zvav(\beta)}\text{ by Lemma~\ref{lem:aboutS}(a)}\notag\\
&=\sum_{v\in \Ereg\setminus R} \mu(\{v\})Z_v(\beta) \text{ by Proposition~\ref{prop:E}}\notag\\
&=\sum_{v\in \Ereg\setminus R}\sum_{u\in E^*v}\mu(\{v\})N(u)^{-\beta}\notag\\
&=\sum_{v\in \Ereg\setminus R}\sum_{u\in E^*v}\mu(\{u\}).\label{eq:wf-well}
\end{align}

The scaling identity in $C^\beta$ and Lemma~\ref{lem:aboutS} imply that
\begin{equation}\label{eq:mu-u}
\mu(\{u\})=N(u)^{-\beta}\mu(\{r(u)\})=N(u)^{-\beta}S(m)(r(u))
\end{equation}
for all $u\in E^*\cap\partial_RE$.
We claim that $S(m)(r(u))=0$ unless $r(u)\in\Ereg\setminus R$. From
Lemma~\ref{lem:moreaboutS} and the definition of $S$ we have $S(m)(r(u))=0$ for $r(u)\in\Ecrit\cup R$. If $r(u)\notin \Eequ$, then Proposition~\ref{prop:E}(4) implies that $m(r(u))=0$, therefore also $S(m)(r(u))=0$. The claim follows and implies that
$\mu$ is supported on the finite paths that end in vertices $v\in \Ereg\setminus R$. Hence \eqref{eq:wf-well} gives that $\sum_{v\in \Ereg\setminus R}W_{\operatorname{fin}}(m)(v)=\mu(E^*\cap \partial_R E)=1$, which shows that $W_{\operatorname{fin}}$ is well-defined. Clearly $W_{\operatorname{fin}}$ is a convex map.

We claim next that  $\bigl\{\epsilon:\Ereg\setminus R\to [0,1]\bigm\vert \sum_{v\in\Ereg\setminus R}\epsilon(v)=1\bigr\} \subseteq W_{\operatorname{fin}}(\Cf)$, which gives surjectivity. Clearly for every $v\in \Ereg\setminus R$ the function $\delta_v: \Ereg\setminus R\to [0,1]$ defined by $\delta_v(v)=1$ and $\delta_v(v')=0$ when $v'\ne v$ belongs to the set on the left-hand side. The claim  follows because Proposition~\ref{prop:m} shows that  $W_{\operatorname{fin}}(m_v^\beta)=\delta_v$ and every $\epsilon$ can be written as $\epsilon=\sum_{v\in \Ereg\setminus R} \epsilon(v)\delta_v$.

Finally, assume that $W_{\operatorname{fin}}(m_1)=W_{\operatorname{fin}}(m_2)$ for $m_1,m_2\in \Cf$. Then $S(m_1)=S(m_2)$. By \eqref{eq:mu-u}, the measure $\mu_1$ associated with $m_1$ equals the one associated to $m_2$ on all finite paths, hence $m_1=m_2$. This shows injectivity and finishes the proof.
\end{proof}

Next we analyze the elements in $D^\beta$ in relation to vertices in $\Ecrit$. The first observation is that $\sim$ is an equivalence relation on $\Ecrit$ due to Proposition~\ref{prop:sums}, assertions \eqref{ item:gg3} and \eqref{ item:gg4}.  We let $\Ecrit/_{\sim}$ denote the set of equivalence classes and write $\mathfrak{v}:=\{v'\in \Ecrit\mid v\sim v'\}$ for the equivalence class of $v$.

\begin{theorem}\label{thm:infinitetype-crit} Let $\beta\in [0,\infty)$. The map $\Winf$ from $D^\beta$ to the set of $[0,1]$-valued functions on $\Ecrit/_{\sim}$  given by
\begin{equation}\label{eqdef:W-inf}
	\Winf(m)(\mathfrak{v})=m(v)\Zva(\beta)
\end{equation}
for $\mathfrak{v}\in \Ecrit/_{\sim}$, $v\in \mathfrak{v}$, is a well-defined convex isomorphism from $\Cinfa$ onto $$\biggl\{\epsilon:(\Ecrit/_{\sim})\to [0,1]\biggm\vert \sum_{\mathfrak{v}\in\Ecrit/_{\sim}}\epsilon(\mathfrak{v})=1\biggr\}.$$

For every $m\in D^\beta$, the map $m_{\mathfrak{v}}^\beta:=m_v^\beta$ is well-defined on $\Ecrit/_{\sim}$. The inverse of $\Winf$ is  $\epsilon\mapsto \sum_{\mathfrak{v}\in\Ecrit/_{\sim}}\epsilon(\mathfrak{v})m^\beta_{\mathfrak{v}}$.
\end{theorem}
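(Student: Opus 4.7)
The plan is to track how a conservative measure $\mu\in\Cinfa$ distributes across the equivalence classes $\mathfrak{v}\in\Ecrit/_{\sim}$. For $v\in E^0$ and $n\ge 1$, let $A_n^v$ denote the set of $x\in E^\infty$ that visit $v$ at least $n$ times, and $A_\infty^v:=\bigcap_n A_n^v$. The key computation is that for any $m\in D^\beta$ with associated measure $\mu$, the decomposition of an infinite path up to its $n$-th visit to $v$ as an element of $\Eva\cdot(\Evav)^{n-1}$, combined with the scaling condition in $C^\beta$ applied to the corresponding cylinders, gives
\begin{equation*}
\mu(A_n^v)=m(v)\Zva(\beta)\bigl(\Zvav(\beta)\bigr)^{n-1}.
\end{equation*}
When $v\in\Ecrit$ this equals $m(v)\Zva(\beta)$ independently of $n$, so $\mu(A_\infty^v)=m(v)\Zva(\beta)$; when $v\in\Ereg$ the factor $\Zvav(\beta)<1$ forces $\mu(A_\infty^v)=0$, and when $v\notin\Eequ$ we have $m(v)=0$ by Proposition~\ref{prop:E}.

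To prove that $\Winf(m)(\mathfrak{v})$ does not depend on $v\in\mathfrak{v}$, fix $v\sim v'$ in $\Ecrit$. Concatenating a path in $vE^*v'$ with a path in $v'E^*v$ and trimming intermediate returns to $v$ exhibits some loop in $\Evav$ that contains $v'$ as an intermediate vertex, so the partial sum $Y$ of $N(u)^{-\beta}$ over those $u\in\Evav$ which do not visit $v'$ at an intermediate vertex satisfies $Y<\Zvav(\beta)=1$. Iterating the excursion decomposition, the $\mu$-measure of paths whose excursions past the $M$-th visit to $v$ all avoid $v'$ is bounded by $m(v)\Zva(\beta)\cdot Y^n$ for every $n$, hence vanishes; a countable union over $M$ gives $\mu(A_\infty^v\triangle A_\infty^{v'})=0$, so $m(v)\Zva(\beta)=m(v')Z_{v'}^a(\beta)$. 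Setting $\Erec^\mathfrak{v}:=A_\infty^v$ (modulo $\mu$-null sets, for any $v\in\mathfrak{v}$) defines a $\Phi$-invariant subset of $\Erec$; distinct classes give essentially disjoint $\Erec^\mathfrak{v}$ because a path visiting two vertices infinitely often forces them to be equivalent. For $m\in\Cinfa$ the above estimates then give $\mu(\Erec)=\sum_\mathfrak{v}\mu(\Erec^\mathfrak{v})$, hence $\sum_\mathfrak{v}\Winf(m)(\mathfrak{v})=1$, and convexity of $\Winf$ is immediate.

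The crucial step is a uniqueness lemma: any probability measure $\mu\in C^\beta$ supported on $\Erec^\mathfrak{v}$ equals $\mu_v^\beta$ for every $v\in\mathfrak{v}$. Indeed, $\mu(A_\infty^v)=1$ gives $m(v)=1/\Zva(\beta)=m_v^\beta(v)$; for $v'\succ v$, $\mu$-almost every path from $v'$ reaches $v$, so $Z(v')\cap\partial_RE=\bigsqcup_{u\in\Es{v'}{v}}(Z(u)\cap\partial_RE)$ modulo a $\mu$-null set, and the scaling condition gives $m(v')=\sum_{u\in\Es{v'}{v}}N(u)^{-\beta}m(v)=m_v^\beta(v')$; for $v'\not\succ v$ both $m(v')$ and $m_v^\beta(v')$ vanish. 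Since $\mu_v^\beta(A_\infty^v)=m_v^\beta(v)\Zva(\beta)=1$, each $m_v^\beta$ with $v\in\Ecrit$ lies in $\Cinfa$ with support in $\Erec^\mathfrak{v}$, so the lemma yields $m_{v_1}^\beta=m_{v_2}^\beta$ whenever $v_1\sim v_2$, which is the asserted well-definedness of $m_\mathfrak{v}^\beta$. Proposition~\ref{prop:sums}(iv) moreover forces $\Es{v'}{v}=\emptyset$ when $v\in\mathfrak{v}$ and $v'\in\mathfrak{v}'\ne\mathfrak{v}$ are both in $\Ecrit$, so $m_v^\beta(v')=0$ and the lemma applied to $\mu_i|_{\Erec^\mathfrak{v}}/\Winf(m_i)(\mathfrak{v})$ yields injectivity of $\Winf$; surjectivity and the inverse formula follow by checking that $m:=\sum_\mathfrak{v}\epsilon(\mathfrak{v})m_\mathfrak{v}^\beta$ (a countable sum since $\sum\epsilon(\mathfrak{v})=1$) lies in $\Cinfa$ and satisfies $\Winf(m)=\epsilon$ via the same vanishing.

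The main obstacle will be the Borel--Cantelli-style estimate: one must extract the strict inequality $Y<1$ from $v\sim v'\in\Ecrit$ alone and combine it with the $\Phi$-invariance and countable additivity of $\mu$ to obtain the symmetric-difference null statement, after which the structural identifications and the uniqueness lemma follow by comparing scaling identities.
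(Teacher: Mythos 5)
Your proposal is correct, and its first half coincides with the paper's own argument: your sets $A_\infty^v$ are exactly the paper's $T_v$, the identity $\mu(A_n^v)=m(v)\Zva(\beta)\bigl(\Zvav(\beta)\bigr)^{n-1}$ is the paper's computation $\mu(T_v)=\Zva(\beta)\mu(R_v)$ with $\mu(R_v)=\lim_n(\Zvav(\beta))^n m(v)$, and your $Y<1$ estimate for excursions avoiding $v'$ is precisely the paper's set $P$ with $\sum_{u\in P}N(u)^{-\beta}<1$ yielding $\mu(T_{v_1}\bigtriangleup T_{v_2})=0$. Where you genuinely diverge is in the second half. The paper establishes $m^\beta_{v_1}=m^\beta_{v_2}$ for $v_1\sim v_2$ and the identity $m=\sum_{\mathfrak{v}}\Winf(m)(\mathfrak{v})m^\beta_{\mathfrak{v}}$ through a chain of algebraic lemmas: Lemma~\ref{lem:sums} (the product identity $\bigl(\sum_{\Es{v_2}{v_1}}N(u)^{-\beta}\bigr)\bigl(\sum_{\Es{v_1}{v_2}}N(u)^{-\beta}\bigr)=1$), Lemma~\ref{lem:help}, Proposition~\ref{prop:meq}, Proposition~\ref{prop:equ}, and finally a squeeze using Lemma~\ref{lem:mineq} together with total mass one. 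You replace all of this by a single measure-theoretic uniqueness lemma (any element of $C^\beta$ concentrated on $A_\infty^v$ is $\mu^\beta_v$, proved by decomposing $Z(v')\cap\partial_RE$ along first hitting times of $v$) plus an ergodic-decomposition argument $\mu=\sum_{\mathfrak{v}}\Winf(m)(\mathfrak{v})\mu^\beta_{\mathfrak{v}}$ obtained by normalizing restrictions to the invariant sets $\Erec^{\mathfrak{v}}$. In particular you derive Proposition~\ref{prop:equ} directly from $\mu(A_\infty^{v_1})=\mu(A_\infty^{v_2})$ and never need Lemma~\ref{lem:sums}; this is arguably more conceptual, and it delivers the inverse formula and injectivity in one stroke, at the cost of losing the explicit algebraic identities the paper reuses elsewhere (e.g.\ in the proof of Proposition~\ref{prop:meq}). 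Your appeal to Proposition~\ref{prop:sums}\eqref{ item:gg4} to get $m^\beta_v(v')=0$ across distinct critical classes is correct. The only point to make explicit --- a point the paper is equally terse about --- is the handling of uncountability: the unions $\bigcup_{v\notin\Ecrit}A_\infty^v$ and $\bigsqcup_{u\in\Es{v'}{v}}Z(u)$ may be indexed by uncountable sets, so you should note that only countably many vertices (resp.\ classes, resp.\ paths $u$) carry positive mass and invoke inner regularity of $\mu$ on the remaining open null pieces before summing.
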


The proof of this theorem will follow from a series of results. We start by investigating when an equality $m_{v_1}^\beta=m_{v_2}^\beta$ can take place.

\begin{lemma}\label{lem:help}
	Let $v\in \Ecrit\cup \left(\Ereg\setminus R\right)$ and $m\in D^\beta$. Then $m=m^\beta_v$ if and only if $m(v)=(\Zva(\beta))^{-1}$.
\end{lemma}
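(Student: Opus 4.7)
The forward implication is immediate: since $\Es{v}{v}=\{v\}$ and $N(v)=1$, the definition \eqref{eqdef:m_v} gives $m_v^\beta(v)=(\Zva(\beta))^{-1}$, so if $m=m_v^\beta$ then $m(v)=(\Zva(\beta))^{-1}$.

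For the converse, assume $m(v)=(\Zva(\beta))^{-1}$. My plan is to show that $m\ge m_v^\beta$ pointwise on $E^0$ and then use normalization to upgrade this to equality. For the pointwise bound, I would fix $v'\in E^0$ and apply Lemma~\ref{lem:mineq} with $v_1=v$ and $v_2=v'$ to obtain
\begin{equation*}
m(v')\ge \sum_{u\in\Es{v'}{v}}N(u)^{-\beta}\,m(v)=\sum_{u\in\Es{v'}{v}}N(u)^{-\beta}(\Zva(\beta))^{-1}=m_v^\beta(v').
\end{equation*}

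To conclude, I would use that both $m$ and $m_v^\beta$ are elements of $D^\beta$: by hypothesis $m\in D^\beta$, and by Proposition~\ref{prop:m}(b) the assumption $v\in\Ecrit\cup(\Ereg\setminus R)$ guarantees that $m_v^\beta\in D^\beta$. Hence both functions satisfy \eqref{item:m1}, i.e.\ $\sum_{v'\in E^0}m(v')=1=\sum_{v'\in E^0}m_v^\beta(v')$. Combined with the pointwise inequality $m(v')\ge m_v^\beta(v')$ for every $v'\in E^0$, this forces $m(v')=m_v^\beta(v')$ for all $v'$, so $m=m_v^\beta$.

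The only delicate point in this plan is the invocation of Proposition~\ref{prop:m}(b) to ensure $m_v^\beta\in D^\beta$ (so that \eqref{item:m1} is available on the right-hand side); this is precisely what the hypothesis $v\in \Ecrit\cup(\Ereg\setminus R)$ is used for. Everything else is a direct application of Lemma~\ref{lem:mineq} plus the elementary observation that two summable non-negative sequences with the same total sum and one dominating the other must coincide.
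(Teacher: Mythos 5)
Your proof is correct, but it closes the argument differently from the paper. Both arguments obtain the pointwise lower bound $m(v')\ge m_v^\beta(v')$ from the disjointness of the cylinders $\cyl{u}$, $u\in\Es{v'}{v}$ (your citation of Lemma~\ref{lem:mineq} is exactly this step; the paper re-derives it inline). Where you diverge is in establishing the reverse inequality: the paper passes to the associated measure $\mu\in C^\beta$, observes that the hypothesis $m(v)=(\Zva(\beta))^{-1}$ forces the disjoint family $\{\cyl{u}\mid u\in\Eva\}$ to exhaust the total mass of $\mu$, and then computes $m(v')=\mu(\cyl{v'}\cap\partial_RE)$ by decomposing over that family to get $m(v')\le m_v^\beta(v')$ directly. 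You instead invoke the normalization $\sum_{v'}m(v')=1=\sum_{v'}m_v^\beta(v')$ and the elementary fact that a dominated non-negative summable function with the same total sum must agree with the dominating one. Your route is shorter and avoids the measure-exhaustion computation entirely; its only cost is that you must know $\sum_{v'}m_v^\beta(v')=1$, which is supplied by Proposition~\ref{prop:m}(a) (in fact condition \eqref{item:m1} there holds for every $v\in\Eequ$, so the full membership $m_v^\beta\in D^\beta$ from part (b) is slightly more than you need). You also record the trivial forward implication, which the paper omits. No gaps.
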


\begin{proof}
	Suppose that $m(v)=(\Zva(\beta))^{-1}$, and let $\mu$ be the measure associated to $m$  by \eqref{eq:convention-mu-m}. It follows from the scaling condition in $C^\beta$ that $\mu(\cyl{u}\cap\partial_RE)=N(u)^{-\beta}\mu(\cyl{v}\cap\partial_RE)=N(u)^{-\beta}(\Zva(\beta))^{-1}$ for any $u\in E^*v$. Thus
	\begin{equation*}
		\sum_{u\in\Eva}\mu(\cyl{u}\cap\partial_RE)=\sum_{u\in\Eva}N(u)^{-\beta}(\Zva(\beta))^{-1}=1.
	\end{equation*}
	This shows that $A:=\{u\in\Eva\mid \mu(\cyl{u}\cap\partial_RE)>0\}$ is countable. Let $v'\in E^0$. From
	\begin{align*}
		m(v')&=\mu(\cyl{v'}\cap\partial_RE)=\sum_{u\in A}\mu(\cyl{u}\cap\partial_RE)\mu(\cyl{v'}\cap\partial_RE)\\
		&=\sum_{u\in A,\ s(u)=v'}\mu(\cyl{u}\cap\partial_RE)
		=\sum_{u\in A,\ s(u)=v'}N(u)^{-\beta}(\Zva(\beta))^{-1}
	\end{align*}
it follows that $m(v')\leq m^\beta_v(v')$ because $\{u\in A, s(u)=v\}\subset \Es{v'}{v}$.
	Conversely, if $u_1,u_2\in\Es{v'}{v}$ and $u_1\ne u_2$, then $\cyl{u_1}\cap\cyl{u_2}=\emptyset$. Hence we have $\bigcup{u\in\Es{v'}{v}}\cyl{u}\subseteq\cyl{v'}$, so
	\begin{align*}
			m(v')&=\mu(\cyl{v'}\cap\partial_RE)\ge\sum_{u\in\Es{v'}{v}}\mu(\cyl{u}\cap\partial_RE)\\
			&=\sum_{u\in\Es{v'}{v}}N(u)^{-\beta}(\Zva(\beta))^{-1}=m^\beta_v(v').
		\end{align*}
		Thus, $m=m^\beta_v$.
\end{proof}

\begin{lemma}\label{lem:sums}
	Suppose $v_1,v_2\in\Ecrit$ and that $v_1\sim v_2$. Then $$\Bigl(\sum_{u_1\in\Es{v_2}{v_1}}N(u_1)^{-\beta}\Bigr)\Bigl(\sum_{u_2\in\Es{v_1}{v_2}}N(u_2)^{-\beta}\Bigr)=1.$$
\end{lemma}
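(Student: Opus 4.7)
The plan is to exploit the critical assumption $Z_{v_1}^s(\beta)=Z_{v_2}^s(\beta)=1$ through a combinatorial decomposition of simple loops and first-passage paths relative to the pair $\{v_1,v_2\}$. The case $v_1=v_2$ is immediate since $\Es{v_1}{v_1}=\{v_1\}$ gives $\alpha:=\sum_{u\in\Es{v_2}{v_1}}N(u)^{-\beta}$ and $\gamma:=\sum_{u\in\Es{v_1}{v_2}}N(u)^{-\beta}$ both equal to $1$; hence I would assume $v_1\neq v_2$ throughout. Introduce auxiliary weighted sums: let $S$ denote the sum of $N(q)^{-\beta}$ over $q\in v_2 E^*_s v_2$ that avoid $v_1$ altogether, and let $T$ be defined symmetrically at $v_1$ avoiding $v_2$. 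Further, let $\hat\alpha$ be the sum of $N(p)^{-\beta}$ over those $p\in\Es{v_2}{v_1}$ whose interior also avoids $v_2$, and let $\hat\gamma$ be the analogous sum over $p'\in\Es{v_1}{v_2}$ whose interior avoids $v_1$.

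First I would establish $S<1$ and $T<1$. The hypothesis $v_1\sim v_2$ gives nonempty sets $\Es{v_1}{v_2}$ and $\Es{v_2}{v_1}$; concatenating any element of the first with any element of the second produces a $v_2$-loop of length $\geq 1$ passing through $v_1$, and factoring it at its returns to $v_2$ exhibits at least one simple $v_2$-loop whose interior meets $v_1$. Hence $Z_{v_2}^s(\beta)-S>0$, so $S<1$, and similarly $T<1$.

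The key combinatorial identity comes from partitioning $v_1 E^*_s v_1$ into loops avoiding $v_2$ (contributing $T$ to $Z_{v_1}^s(\beta)$) and loops visiting $v_2$. Each loop $\ell$ of the latter class factors uniquely as $\ell=p\,q_1q_2\cdots q_k\,p'$, where $p$ is the prefix ending at the first occurrence of $v_2$ (an element of the $\hat\gamma$-family), each $q_i$ is a simple $v_2$-loop avoiding $v_1$ (since $\ell$ has no interior $v_1$), and $p'$ is the tail from the last occurrence of $v_2$ back to $v_1$ (an element of the $\hat\alpha$-family). Summing over $k\geq 0$ and invoking the geometric series gives
\begin{equation*}
Z_{v_1}^s(\beta)=T+\hat\gamma\cdot\frac{1}{1-S}\cdot\hat\alpha=1,
\end{equation*}
which rearranges to $\hat\alpha\hat\gamma=(1-S)(1-T)$.

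Finally, every path in $\Es{v_2}{v_1}$ decomposes uniquely at its interior visits to $v_2$ as a (possibly empty) concatenation of simple $v_2$-loops avoiding $v_1$ followed by a tail in the $\hat\alpha$-family, yielding $\alpha=\hat\alpha/(1-S)$; symmetrically $\gamma=\hat\gamma/(1-T)$. Multiplying gives
\begin{equation*}
\alpha\gamma=\frac{\hat\alpha\hat\gamma}{(1-S)(1-T)}=1,
\end{equation*}
as desired. The main obstacle is the verification that $S,T<1$ (so that the geometric series converge), and this is precisely where the two-sided reachability $v_1\sim v_2$ enters, as opposed to merely $v_1\succ v_2$.
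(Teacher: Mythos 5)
Your proof is correct, and it reaches the identity by a genuinely different decomposition than the one in the paper. The paper's proof introduces only the set $P=\{u\in v_2E_s^*v_2\mid r(u')\ne v_1\text{ for any }u'\le u\}$ (your $v_1$-avoiding simple $v_2$-loops, of total weight $x=S$) and exhibits a single bijection, given by concatenation, between $\bigcup_{n\ge 0}(P)^n\times\bigl(v_2E_s^*v_2\setminus P\bigr)$ and the product set $\{u_1u_2\mid u_1\in\Es{v_2}{v_1},\ u_2\in\Es{v_1}{v_2}\}$; since $Z^s_{v_2}(\beta)=1$ gives weight $1-x$ to the second factor and $v_1\sim v_2$ gives $x<1$, the left-hand side of the lemma equals $\sum_{n\ge 0}x^n(1-x)=1$ in one step. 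You instead introduce the doubly restricted first-passage weights $\hat\alpha,\hat\gamma$ and chain together three renewal-type factorizations, $\alpha=\hat\alpha/(1-S)$, $\gamma=\hat\gamma/(1-T)$ and $Z^s_{v_1}(\beta)=T+\hat\gamma\hat\alpha/(1-S)$, the last obtained by splitting the simple $v_1$-loops according to whether they meet $v_2$. All three bijections are valid, the nonnegativity of the terms justifies the rearrangements, and your preliminary step $S,T<1$ is exactly what makes the geometric series converge, so the argument is complete. The trade-off: your version is symmetric in $v_1$ and $v_2$ and invokes criticality ($Z^s=1$) at both vertices, while the paper's uses it only at $v_2$ and is shorter; on the other hand your intermediate identities make visible where each hypothesis enters. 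Two small remarks: (i) in your verification of $S<1$, concatenating an element of $\Es{v_1}{v_2}$ followed by one of $\Es{v_2}{v_1}$ yields a $v_1$-loop, not a $v_2$-loop --- the order you need there is the reverse one (the order as written is the one needed for $T<1$); (ii) your separate treatment of $v_1=v_2$ is a sensible precaution that the paper omits, although the lemma is only ever applied with $v_1\ne v_2$.
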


\begin{proof}
	Let $P=\{u\in v_2E_s^*v_2\mid r(u')\ne v_1\text{ for any }u'\le u\}$ and $x=\sum_{u\in P}N(u)^{-\beta}$. Then the
assumption $v_1\sim v_2$ implies that
$$
x<\sum_{u\in v_2E_s^*v_2}N(u)^{-\beta}=Z^s_{v_2}(\beta)=1.
 $$
 Now $(u_1,\dots,u_n,u)\mapsto u_1\dotsm u_nu$ defines a bijection between $\bigcup_{n=0}^\infty(P)^n\times (v_2E_s^*v_2\setminus P)$ (where $(P)^0=\{v_2\}$) and $\{u_1u_2\mid u_1\in\Es{v_2}{v_1},\ u_2\in\Es{v_1}{v_2}\}$, hence
	\begin{equation*}
		\Bigl(\sum_{u_1\in\Es{v_2}{v_1}}N(u_1)^{-\beta}\Bigr)\Bigl(\sum_{u_2\in\Es{v_1}{v_2}}N(u_2)^{-\beta}\Bigr)=\sum_{n=0}^\infty x^n(1-x)=1.
	\end{equation*}
\end{proof}

\begin{proposition}\label{prop:meq}
	Suppose $v_1,v_2\in \Ecrit\cup \left(\Ereg\setminus R\right)$ and $v_1\ne v_2$. Then $m^\beta_{v_1}=m^\beta_{v_2}$ if and only if $v_1,v_2\in\Ecrit$ and $v_1\sim v_2$.
\end{proposition}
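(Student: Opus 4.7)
For the forward implication, I would first compare the functions $S(m^\beta_{v_1})$ and $S(m^\beta_{v_2})$. By Proposition~\ref{prop:m}(a), $S(m^\beta_{v_i})$ is supported only at $v_i$, with value $(1-Z^s_{v_i}(\beta))/Z^a_{v_i}(\beta)$. The assumed equality $m^\beta_{v_1}=m^\beta_{v_2}$ forces $S(m^\beta_{v_1})=S(m^\beta_{v_2})$, and because $v_1\ne v_2$ this can only happen if $Z^s_{v_1}(\beta)=Z^s_{v_2}(\beta)=1$, i.e.\ $v_1,v_2\in\Ecrit$. For the $\sim$ part, evaluating $m^\beta_{v_1}=m^\beta_{v_2}$ at $v_2$ (respectively $v_1$) gives $m^\beta_{v_1}(v_2)=1/Z^a_{v_2}(\beta)>0$ and $m^\beta_{v_2}(v_1)=1/Z^a_{v_1}(\beta)>0$, so by the formula \eqref{eqdef:m_v} both $\Es{v_2}{v_1}$ and $\Es{v_1}{v_2}$ are non-empty, giving $v_1\sim v_2$.

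For the reverse implication, assume $v_1,v_2\in\Ecrit$ with $v_1\sim v_2$. Since $v_1\in\Ecrit$, Proposition~\ref{prop:m}(b) gives $m^\beta_{v_1}\in D^\beta$, so by Lemma~\ref{lem:help} it suffices to establish the single scalar identity $m^\beta_{v_1}(v_2)=1/Z^a_{v_2}(\beta)$. Writing $S_{12}=\sum_{u\in\Es{v_1}{v_2}}N(u)^{-\beta}$ and $S_{21}=\sum_{u\in\Es{v_2}{v_1}}N(u)^{-\beta}$ (both positive by $v_1\sim v_2$, both finite since $v_1,v_2\in\Eequ$, and satisfying $S_{12}S_{21}=1$ by Lemma~\ref{lem:sums}), this identity is equivalent to the combinatorial relation $Z^a_{v_2}(\beta)=Z^a_{v_1}(\beta)\,S_{12}$.

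To prove the latter I would split each partition function $Z^a_{v_i}(\beta)$ according to whether a first-passage path to $v_i$ does or does not meet the other vertex. Set
\begin{align*}
T&=\sum\{N(w)^{-\beta}\mid w\in E_a^*v_1,\ w\text{ avoids }v_2\}, &
U'&=\sum\{N(w)^{-\beta}\mid w\in E_a^*v_1,\ w\text{ visits }v_2\},\\
T'&=\sum\{N(w)^{-\beta}\mid w\in E_a^*v_2,\ w\text{ avoids }v_1\}, &
U&=\sum\{N(w)^{-\beta}\mid w\in E_a^*v_2,\ w\text{ visits }v_1\},
\end{align*}
so that $Z^a_{v_1}(\beta)=T+U'$ and $Z^a_{v_2}(\beta)=T'+U$. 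The factorization of a first-passage path to $v_2$ at the first moment it reaches $v_1$ (and symmetrically for $v_1$) gives bijections $\tilde E_a^*v_1\times\Es{v_1}{v_2}\leftrightarrow\{x\in E_a^*v_2: x\text{ visits }v_1\}$ and the analogue with $1$ and $2$ swapped, yielding the multiplicative identities $TS_{12}=U$ and $T'S_{21}=U'$. Combining $T'=U'S_{12}$ (from $T'S_{21}=U'$ and $S_{12}S_{21}=1$) with $U=TS_{12}$ gives
\[
Z^a_{v_2}(\beta)=T'+U=U'S_{12}+TS_{12}=(T+U')S_{12}=Z^a_{v_1}(\beta)\,S_{12},
\]
as desired.

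The main subtlety I expect is verifying that the factorization at the first visit to the auxiliary vertex really lands a first-passage path in the correct set; in particular one must check that the prefix, when cut at that first visit, genuinely avoids the target vertex of the original path, which is exactly where the hypotheses defining $E_a^*$ are used.
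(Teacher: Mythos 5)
Your proposal is correct, and the ``only if'' direction coincides with the paper's (the paper phrases it contrapositively: if some $v_i\in\Ereg$ then $S(m^\beta_{v_1})$ and $S(m^\beta_{v_2})$ have different supports by Proposition~\ref{prop:m}(a), and if $v_1\not\succ v_2$ then $m^\beta_{v_2}(v_1)=0\ne m^\beta_{v_1}(v_1)$; your positivity argument via \eqref{eqdef:m_v} is the same observation). The ``if'' direction, however, is handled by a genuinely different route. The paper reduces to Lemma~\ref{lem:help} exactly as you do, but then establishes $m^\beta_{v_1}(v_2)=(Z^a_{v_2}(\beta))^{-1}$ by a squeeze: it sandwiches $\bigl(\sum_{u_1\in\Es{v_2}{v_1}}N(u_1)^{-\beta}\bigr)\bigl(\sum_{u_2\in\Es{v_1}{v_2}}N(u_2)^{-\beta}\bigr)m^\beta_{v_1}(v_2)$ between two quantities using the a priori bound $m(v)\le (Z^a_v(\beta))^{-1}$ of Proposition~\ref{prop:E}\eqref{item:a3.5} twice, and then invokes Lemma~\ref{lem:sums} to force all inequalities to be equalities. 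You instead prove the explicit identity $Z^a_{v_2}(\beta)=Z^a_{v_1}(\beta)\sum_{u\in\Es{v_1}{v_2}}N(u)^{-\beta}$ directly by first-passage decomposition of the partition functions, using Lemma~\ref{lem:sums} only to convert $T'S_{21}=U'$ into $T'=U'S_{12}$. Your bijections do check out: since $v_1\ne v_2$, the cut at the first visit to the auxiliary vertex is a proper prefix of the original first-passage path, so it inherits the avoidance condition, and all rearrangements are of nonnegative convergent series because $v_1,v_2\in\Eequ$. What your route buys is that it proves, rather than merely uses, the relation that the paper only extracts afterwards as Equation~\eqref{eq:t} in the proof of Proposition~\ref{prop:equ} (there deduced \emph{from} Proposition~\ref{prop:meq} together with Lemma~\ref{lem:help}); so you have reversed the logical order and made the combinatorial content explicit. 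What the paper's route buys is brevity: it recycles the measure-theoretic bound of Proposition~\ref{prop:E}\eqref{item:a3.5} and avoids introducing the four auxiliary sums $T,T',U,U'$ and the verification of the two extra bijections.
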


\begin{proof}
	Assume $v_1,v_2\in\Ecrit$ and $v_1\sim v_2$. Two applications of Proposition \ref{prop:E}\eqref{item:a3.5} give us that
	\begin{align*}
		&\Bigl(\sum_{u_1\in\Es{v_2}{v_1}} N(u_1)^{-\beta}\Bigr)\Bigl(\sum_{u_2\in\Es{v_1}{v_2}} N(u_2)^{-\beta}m^\beta_{v_1}(v_2)\Bigr)\\
		&\qquad\le\Bigl(\sum_{u_1\in\Es{v_2}{v_1}} N(u_1)^{-\beta}\Bigr)\Bigl(\sum_{u_2\in\Es{v_1}{v_2}}N(u_2)^{-\beta}(Z^a_{v_2}(\beta))^{-1}\Bigr)\\
		&\qquad=\sum_{u_1\in\Es{v_2}{v_1}} N(u_1)^{-\beta}m^\beta_{v_2}(v_1)
		\le \sum_{u_1\in\Es{v_2}{v_1}} N(u_1)^{-\beta}(Z^a_{v_1}(\beta))^{-1}\\
		&\qquad=m^\beta_{v_1}(v_2).
	\end{align*}
	It follows from Lemma \ref{lem:sums} that the above inequalities are in fact equalities, so $m^\beta_{v_1}(v_2)=(Z^a_{v_2}(\beta))^{-1}$. Thus $m^\beta_{v_1}=m^\beta_{v_2}$ by Lemma \ref{lem:help}.
	
	If $v_1\in\Ereg$, then Proposition \ref{prop:m} implies that $S(m^\beta_{v_1})(v_1)\ne 0$. Since $S(m^\beta_{v_2})(v_1)=0$, necessarily then $m^\beta_{v_1}\ne m^\beta_{v_2}$. Similarly, $m^\beta_{v_1}\ne m^\beta_{v_2}$ if $v_2\in\Ereg$.
	
	If $v_1\not\succ v_2$, then $m^\beta_{v_2}(v_1)=0\ne (Z^a_{v_1}(\beta))^{-1}=m^\beta_{v_1}(v_1)$, so $m^\beta_{v_1}\ne m^\beta_{v_2}$. Similarly, $m^\beta_{v_1}\ne m^\beta_{v_2}$ if $v_2\not\succ v_1$.
\end{proof}

\begin{proposition}\label{prop:equ}
	Let $m\in D^\beta$ and $v_1,v_2\in\Ecrit$. Suppose $v_1\sim v_2$. Then $m(v_1)Z^a_{v_1}(\beta)=m(v_2)Z^a_{v_2}(\beta)$.
\end{proposition}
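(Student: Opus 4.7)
The plan is to isolate an $m$-independent combinatorial identity between $Z^a_{v_1}(\beta)$ and $Z^a_{v_2}(\beta)$ and combine it with a direct consequence of Lemma~\ref{lem:mineq}. Set
\[
x:=\sum_{u\in\Es{v_2}{v_1}}N(u)^{-\beta}\quad\text{and}\quad y:=\sum_{u\in\Es{v_1}{v_2}}N(u)^{-\beta}.
\]
Since $v_1\sim v_2$, both $\Es{v_2}{v_1}$ and $\Es{v_1}{v_2}$ are non-empty, so $x,y>0$, and Lemma~\ref{lem:sums} gives $xy=1$.

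The first step is to apply Lemma~\ref{lem:mineq} twice to obtain
\[
m(v_2)\ge x\,m(v_1)\quad\text{and}\quad m(v_1)\ge y\,m(v_2).
\]
Concatenating these and using $xy=1$ yields $m(v_2)\ge m(v_2)$, and symmetrically $m(v_1)\ge m(v_1)$. If one of $m(v_1),m(v_2)$ vanishes, the positivity of $x$ and $y$ forces the other to vanish as well and the desired identity reads $0=0$. Hence I may assume $m(v_1),m(v_2)>0$, in which case the two displayed inequalities must be equalities; in particular
\[
m(v_2)=x\,m(v_1).
\]

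The decisive second step is to extract the identity $Z^a_{v_1}(\beta)=x\,Z^a_{v_2}(\beta)$ by computing $m^\beta_{v_1}(v_2)$ in two different ways. Directly from Definition~\ref{def:m-of-v},
\[
m^\beta_{v_1}(v_2)=\sum_{u\in\Es{v_2}{v_1}}N(u)^{-\beta}(Z^a_{v_1}(\beta))^{-1}=\frac{x}{Z^a_{v_1}(\beta)}.
\]
On the other hand, Proposition~\ref{prop:meq} provides $m^\beta_{v_1}=m^\beta_{v_2}$, whence $m^\beta_{v_1}(v_2)=m^\beta_{v_2}(v_2)=1/Z^a_{v_2}(\beta)$. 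Equating the two expressions gives $Z^a_{v_1}(\beta)=x\,Z^a_{v_2}(\beta)$. Multiplying $m(v_2)=x\,m(v_1)$ by $Z^a_{v_2}(\beta)$ and substituting finishes the proof.

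I do not foresee a serious obstacle, since all ingredients are already in place. The only conceptual point worth emphasizing is that the $m$-independent identity $Z^a_{v_1}(\beta)=x\,Z^a_{v_2}(\beta)$, which would be awkward to prove by a direct decomposition of the paths in $E^*_av_1$ according to their first visit to $v_2$, is delivered essentially for free by Proposition~\ref{prop:meq}.
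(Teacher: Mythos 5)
Your proposal is correct and follows essentially the same route as the paper: both use Lemma~\ref{lem:mineq} twice together with Lemma~\ref{lem:sums} ($xy=1$) to force $m(v_2)=x\,m(v_1)$, and both extract the $m$-independent identity relating $Z^a_{v_1}(\beta)$ and $Z^a_{v_2}(\beta)$ from $m^\beta_{v_1}=m^\beta_{v_2}$ (Proposition~\ref{prop:meq}), the only cosmetic difference being that you evaluate the two functions at $v_2$ while the paper evaluates them at $v_1$.
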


\begin{proof}
	Proposition \ref{prop:meq} and Lemma \ref{lem:help} imply that
	\begin{equation} \label{eq:t}
	(Z^a_{v_1}(\beta))^{-1}=\sum_{u\in\Es{v_1}{v_2}}N(u)^{-\beta}(Z^a_{v_2}(\beta))^{-1}.
	\end{equation}
	
	 It follows from Lemma \ref{lem:mineq} that $m(v_2)\ge \sum_{u\in\Es{v_2}{v_1}}N(u)^{-\beta}m(v_1)$ and, similarly, that $m(v_1)\ge \sum_{u\in\Es{v_1}{v_2}}N(u)^{-\beta}m(v_2)$, so
	\begin{multline*}
		\Bigl(\sum_{u_1\in\Es{v_2}{v_1}} N(u_1)^{-\beta}\Bigr)\Bigl(\sum_{u_2\in\Es{v_1}{v_2}} N(u_2)^{-\beta}m(v_2)\Bigr)\\\le\sum_{u_1\in\Es{v_2}{v_1}} N(u_1)^{-\beta}m(v_1)\le m(v_2).
	\end{multline*}
	According to Lemma \ref{lem:sums}, the above inequalities are in fact equalities, so
	 $$m(v_1)=\sum_{u\in\Es{v_1}{v_2}}N(u)^{-\beta}m(v_2)=\frac{Z^a_{v_2}(\beta)}{Z^a_{v_1}(\beta)}m(v_2)$$
	where the last equality follows from Equation \eqref{eq:t}.
\end{proof}

\begin{proof}[Proof of Theorem~\ref{thm:infinitetype-crit}.]
We start by showing that the maps introduced in the formulation of the theorem are well-defined.
Let $m\in D^\beta$ and $\mathfrak{v}\in \Ecrit/_{\sim}$. We deduce from Proposition \ref{prop:meq} that  $m_{v_1}^\beta=m_{v_2}^\beta$ for $v_1,v_2\in \mathfrak{v}$. Therefore, $m_{\mathfrak{v}}^\beta:=m_v^\beta$ for $v\in\mathfrak{v}$ is well-defined.
It follows from Proposition \ref{prop:equ} that the quantity $m(v)\Zva(\beta)$ does not depend on the choice of $v\in \mathfrak{v}$.

Suppose $m\in \Cinfa$ and $v\in\Eequ$. Let $R_v$ denote the set of infinite paths that start in $v$ and return to $v$ infinitely often. Then  $\mu(R_v)=\lim_{n\to\infty}(\Zvav(\beta))^n m(v)$, where $\mu$ is the measure associated with $m$. Hence $\mu(R_v)=0$ for $v\in\Ereg$ and
$\mu(R_v)=m(v)$ when $v\in\Ecrit$. Now let $T_v$ be the collection of infinite paths that pass through $v$ infinitely often. It follows that  $\mu(T_v)=\Zva(\beta) \mu(R_v)$, so $\mu(T_v)$ is zero for $v\in\Ereg$, and equals
$\Zva(\beta)m(v)$ when $v\in\Ecrit$. Suppose now that $v_1\sim v_2$. Let $P$ be as in the proof of Lemma~\ref{lem:sums}. Then
 $\sum_{u\in P}N(u)^{-\beta}<1$, and therefore $\mu(T_{v_1}\bigtriangleup T_{v_2})=0$ where
$T_{v_1}\bigtriangleup T_{v_2}=(T_{v_1}\setminus
T_{v_2})\cup(T_{v_2}\setminus T_{v_1})$. Since $\mu$ is conservative, it has support on $\Erec$. We conclude that
$$
\sum_{\mathfrak{v}\in\Ecrit/_{\sim}}
m(v)\Zva(\beta)=\sum_{\mathfrak{v}\in\Ecrit/_{\sim}} \mu(T_v)=1,
$$
where $v$ is taken such that $v\in \mathfrak{v}$ as $\mathfrak{v}$ runs over $\Ecrit/_{\sim}$.

	Clearly $W_{\operatorname{inf}}$ is a convex map. We  show next that
	\begin{equation} \label{eq:supset}
		\bigl\{\epsilon:\Ecrit/_{\sim}\to [0,1]\bigm\vert \sum_{\mathfrak{v}\in\Ecrit/_{\sim}}\epsilon(\mathfrak{v})=1\bigr\}\subseteq \Winf(\Cinfa).
	\end{equation}
 For $\mathfrak{v}\in \Ecrit/_{\sim}$ let $\delta_{\mathfrak{v}}:  \Ecrit/_{\sim}\to [0,1]$ be defined by $\delta_{\mathfrak{v}}(\mathfrak{v})=1$ and $\delta_{\mathfrak{v}}(\mathfrak{v}')=0$ for $\mathfrak{v}'\in \Ecrit/_{\sim}$ different from $\mathfrak{v}$. Then Lemma \ref{lem:help}  and 	Proposition \ref{prop:sums}\eqref{ item:gg4} show that
 $\Winf(m^\beta_{\mathfrak{v}})=\delta_{\mathfrak{v}}$ for $\mathfrak{v}\in\Ecrit/_{\sim}$, which gives the claimed set inclusion.

To prove that $\Winf$ is injective we show that
	\begin{equation} \label{eq:inj}
		\sum_{\mathfrak{v}\in\Ecrit/_{\sim}}\Winf(m)(\mathfrak{v})m^\beta_{\mathfrak{v}}=m
	\end{equation}
	for any $m\in \Cinfa$. Denote by $m'$ the term $\sum_{\mathfrak{v}\in\Ecrit/_{\sim}}\Winf(m)(\mathfrak{v})m^\beta_{\mathfrak{v}}$. Proposition~\ref{prop:sums}, Lemma~\ref{lem:help}   and Proposition~\ref{prop:equ} imply that
 $m(v)=m'(v)$ for $v\in\Ecrit\setminus R$. On the other hand, Lemma~\ref{lem:mineq} shows that
  $m(v)\ge m'(v)$ for all $v\in E^0$.  Since $\sum_{v\in
E^0}m(v)=\sum_{v\in E^0}m(v')=1$ we must have $m(v)=m'(v)$ for all $v\in E^0$.

\end{proof}

When $A$ is a subset of $D^\beta$ we let $\conv A$ denote the convex hull of $A$. We then have the following consequence of Theorems~\ref{thm:finitetype-reg} and \ref{thm:infinitetype-crit}.

\begin{corollary}\label{cor:extreme}
	\begin{enumerate}
		\item $\{m^\beta_v\mid v\in\Ereg\setminus R\}$ is the set of extreme points of $\Cf$, and $\Cf=\conv \{m^\beta_v\mid v\in\Ereg\setminus R\}$.
		\item $\{m^\beta_{\mathfrak{v}}\mid \mathfrak{v}\in\Ecrit/_{\sim}\}$ is the set of extreme points of $\Cinfa$, and $\Cinfa=\conv\{m^\beta_{\mathfrak{v}}\mid \mathfrak{v}\in\Ecrit/_{\sim}\}$.
		\item If $\Cinfb=\emptyset$ (in particular if $\Ewan=\emptyset$), then $\{m^\beta_v\mid v\in\Ereg\setminus R\}\cup\{m^\beta_{\mathfrak{v}}\mid \mathfrak{v}\in\Ecrit/_{\sim}\}$ is the set of extreme points of $D^\beta$, and $D^\beta=\conv \{m^\beta_v\mid v\in\Ereg\setminus R\}\cup\{m^\beta_{\mathfrak{v}}\mid \mathfrak{v}\in\Ecrit/_{\sim}\}$.
	\end{enumerate}
\end{corollary}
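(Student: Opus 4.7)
The plan is to derive all three parts by transferring the extreme-point structure across the convex isomorphisms of Theorems~\ref{thm:finitetype-reg} and \ref{thm:infinitetype-crit}, and then combining them via the decomposition of Remark~\ref{rmk:decom}.

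For part (1), the target set $\Sigma_1:=\{\epsilon:\Ereg\setminus R\to[0,1]\mid \sum_v\epsilon(v)=1\}$ is a simplex of probability distributions on a discrete set, whose extreme points are exactly the Dirac masses $\delta_v$ for $v\in\Ereg\setminus R$, and every $\epsilon\in\Sigma_1$ equals the convex combination $\sum_v\epsilon(v)\delta_v$ in the pointwise sense. Since $W_{\operatorname{fin}}$ is a convex isomorphism whose inverse sends $\delta_v$ to $m_v^\beta$, extreme points correspond to extreme points, giving the identification of the extreme points of $\Cf$. The inverse formula $W_{\operatorname{fin}}^{-1}(\epsilon)=\sum_v \epsilon(v) m_v^\beta$ then yields $m=\sum_v W_{\operatorname{fin}}(m)(v)\,m_v^\beta$ for every $m\in\Cf$, proving $\Cf=\conv\{m_v^\beta\mid v\in\Ereg\setminus R\}$. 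Part (2) is argued identically using Theorem~\ref{thm:infinitetype-crit}, with Proposition~\ref{prop:meq} ensuring that $m_{\mathfrak{v}}^\beta$ is a well-defined function of the equivalence class.

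For part (3), assume $\Cinfb=\emptyset$. By Remark~\ref{rmk:decom}, every $m\in D^\beta$ admits a unique decomposition $m=\alpha m_f+(1-\alpha)m_c$ with $m_f\in\Cf$, $m_c\in\Cinfa$, and $\alpha\in[0,1]$. This decomposition is affine in $m$: it comes, via Proposition~\ref{prop:int}, from the measure-theoretic splitting $\mu_m=\mu_m|_{E^*\cap\partial_RE}+\mu_m|_{\Erec}$, whose summands are $\Phi$-invariant restrictions of the associated measure $\mu_m$, itself affinely depending on $m$. It follows that $\Cf$ and $\Cinfa$ are faces of $D^\beta$: if $m\in\Cf$ (so $\alpha=1$ in its decomposition) is written as $m=tm_1+(1-t)m_2$ with $t\in(0,1)$ and decompositions $m_i=\alpha_i m_{f,i}+(1-\alpha_i)m_{c,i}$, then $t\alpha_1+(1-t)\alpha_2=1$, forcing $\alpha_1=\alpha_2=1$, i.e.\ $m_1,m_2\in\Cf$; the same argument applies to $\Cinfa$. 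Therefore the extreme points of $D^\beta$ are exactly the union of those of the two faces, and these two families are disjoint since the associated measures are supported on the disjoint $\Phi$-invariant sets $E^*\cap\partial_RE$ and $\Erec$. Combining with (1) and (2) gives (3).

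The main obstacle will be the verification of the face property in (3); once that is in place, (1) and (2) are essentially formal translations through the convex isomorphisms already at hand. The face property itself is not deep, but it is the only point where some genuine structural input beyond Theorems~\ref{thm:finitetype-reg} and \ref{thm:infinitetype-crit} is needed, namely the affineness of the finite-type/conservative decomposition, which in turn rests on the $\Phi$-invariance of $E^*\cap\partial_RE$, $\Erec$, $\Ewan$ noted in Remark~\ref{rmk:decom}.
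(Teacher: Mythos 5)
Your proposal is correct and follows essentially the same route as the paper: the paper presents the corollary as a direct consequence of Theorems~\ref{thm:finitetype-reg} and \ref{thm:infinitetype-crit} (transferring extreme points through the convex isomorphisms $W_{\operatorname{fin}}$ and $\Winf$, whose inverses send the point masses to $m_v^\beta$ and $m_{\mathfrak{v}}^\beta$), combined with the observation, recorded right after Remark~\ref{rmk:decom}, that the extreme points of $D^\beta$ form the disjoint union of the extreme points of $\Cf$, $\Cinfa$ and $\Cinfb$. Your explicit verification that the finite-type/conservative decomposition is affine (via the $\Phi$-invariance of the supports), and hence that $\Cf$ and $\Cinfa$ are faces, is precisely the content the paper leaves implicit in that remark.
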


Notice that if $R\ne \emptyset$, then the KMS states of $(\mathcal{T}C^*(E),\sigma)$ that descend to KMS states on $(C^*(E,R),\sigma)$ are the ones that correspond to elements of $\conv\{m^\beta_v\mid v\in\Ereg\setminus R\}\cup\Cinf$.

\begin{remark} Proposition \ref{prop:sums} implies that for arbitrary $v_1,v_2$ in $\Eequ$ with $v_1\sim v_2$,  either $v_1,v_2$
both belong to $\Ecrit$ or they both belong to $\Ereg$. In particular, if the graph is connected in the sense that for every $v_1$ and $v_2$ in $E^0$ we have $v_1\sim v_2$, then for each $\beta\geq 0$,
either $\Eequ=\Ecrit$ or $\Eequ=\Ereg$. Thus at each $\beta\geq 0$, either $\Cinfa$ or  $\Cf$ are empty. If the graph is not connected, it may happen that both $\Cinfa$ and $\Cf$ are non-trivial at some $\beta\geq 0$, see Example~\ref{eq:stable-TOn}. It might also happen that both $\Cinfb$ and $\Cf$ are non-trivial at some $\beta\geq 0$, see Theorem \ref{thm:exmotivating}.
\end{remark}

We conclude this section with a couple of general remarks about existence of KMS states. Recall that for $\beta\in [0,\infty)$ and $v\in E^0$, we defined the partition function with fixed-target $Z_v(\beta)$ in  \eqref{eq:Zv}. If there is  $\beta$ in $[0,\infty)$ such that $Z_v(\beta)<\infty$, we set
$$\beta_v=\inf\{\beta\in [0,\infty)\mid Z_v(\beta)<\infty\}.
$$
Otherwise we let $\beta_v=\infty$. We have the following simple observation.
\begin{proposition}\label{thm:no-kms}
	Let $v\in E^0$. If $\beta<\beta_v$, then there is no KMS$_\beta$ state $\psi$ for $(C^*(E,R),\sigma)$ such that $\psi(p_v)\ne 0$.
\end{proposition}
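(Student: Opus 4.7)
The plan is to argue by contradiction. I would assume $\psi$ is a KMS$_\beta$ state with $\psi(p_v)\neq 0$ and $\beta<\beta_v$, and derive a contradiction by showing $\beta_v\leq\beta$. Via the convex isomorphism $A^\beta\cong D^\beta$ from Theorem~\ref{thm:kms} (concretely composing Propositions~\ref{prop:state} and~\ref{prop:function}), such a $\psi$ transfers to an element $m\in D^\beta$ with $m(v)=\psi(p_v)>0$. Proposition~\ref{prop:E}\eqref{item:a4} then places $v$ in $\Eequ$, giving $\Zva(\beta)<\infty$ and $\Zvav(\beta)\leq 1$.

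The main obstacle is that Proposition~\ref{prop:E}\eqref{item:a1} expresses $Z_v(\beta)$ as $\Zva(\beta)\bigl(1+\sum_{n\geq 1}(\Zvav(\beta))^n\bigr)$, which is finite only when $\Zvav(\beta)<1$ strictly; in the borderline case $\Zvav(\beta)=1$ (which is precisely $v\in\Ecrit$) one genuinely can have $m(v)>0$ while $Z_v(\beta)=\infty$. Thus the argument cannot establish $Z_v(\beta)<\infty$ directly; instead I plan to work at inverse temperatures slightly larger than $\beta$.

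I would therefore fix an arbitrary $\beta'>\beta$ and upgrade the weak bounds at $\beta$ to strict ones at $\beta'$ using the standing assumption $N(e)>1$. For every $u\in E^*$ with $|u|\geq 1$ one has $N(u)>1$ and hence $N(u)^{-\beta'}<N(u)^{-\beta}$; summing term-by-term over $\Evav$ (legitimate since $\Zvav(\beta)\leq 1<\infty$) yields $\Zvav(\beta')<\Zvav(\beta)\leq 1$ whenever $\Evav\neq\emptyset$, while $\Zvav(\beta')=0<1$ trivially when $\Evav=\emptyset$. Likewise $\Zva(\beta')\leq\Zva(\beta)<\infty$. Proposition~\ref{prop:E}\eqref{item:a1} then gives $Z_v(\beta')=\Zva(\beta')/(1-\Zvav(\beta'))<\infty$. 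Since $\beta'>\beta$ was arbitrary, the set $\{\beta''\in[0,\infty)\mid Z_v(\beta'')<\infty\}$ contains $(\beta,\infty)$, so $\beta_v\leq\beta$, contradicting $\beta<\beta_v$.
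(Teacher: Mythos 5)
Your proof is correct and follows essentially the same route as the paper: the paper's own argument also passes from $\psi(p_v)\ne 0$ to $m(v)\ne 0$, invokes Proposition~\ref{prop:E}\eqref{item:a4} to get $v\in\Eequ$, and then observes that $v\in E^0_{\beta'\text{-reg}}$ for every $\beta'>\beta$ (which is exactly your monotonicity step, left implicit there) before concluding $\beta\ge\beta_v$ via Proposition~\ref{prop:E}\eqref{item:a2}. Your write-up merely makes explicit the use of $N(e)>1$ to upgrade $\Zvav(\beta)\le 1$ to $\Zvav(\beta')<1$, which is a welcome clarification but not a different argument.
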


\begin{proof}
	Suppose $\psi$ is a KMS$_\beta$ state for $(C^*(E,R),\sigma)$ such that $\psi(p_v)\ne 0$. It follows from Proposition~\ref{prop:E}\eqref{item:a4} that $v\in\Eequ$ and thus that $v\in E^0_{\beta'\text{-reg}}$ for any $\beta'>\beta$. It then follows from Proposition~\ref{prop:E}\eqref{item:a2} that $\beta\ge \beta_v$.
\end{proof}

The following observation can be helpful in computing KMS$_\beta$ states for particularly nice graphs and will be used in Examples \ref{ex:a}, \ref{ex:b}, and \ref{ex:c}.

\begin{proposition}\label{prop:nice-graphs}
	Suppose that $N(e)=\exp(1)$ for all $e\in E^1$ and that there exist $k,l\in\N$ such that every $v\in E^0$ receives exactly $k$ paths of length $l\geq 1$. Then:
	\begin{enumerate}
		\item $D^\beta=\emptyset$ when $\beta<\ln (k)/l$.
		\item $D^\beta=D^{\beta}_{\operatorname{inf}}$ when $\beta=\ln (k)/l$.
		\item $\{m^\beta_v\mid v\in E^0\}$ are the extreme points of $D^\beta$ and $D^\beta=\Cf$ when $\beta>\ln (k)/l$.
	\end{enumerate}
\end{proposition}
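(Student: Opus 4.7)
The plan is to hinge the argument on the combinatorial identity $a_{n+l}(v)=k\,a_n(v)$ for all $n\ge 0$ and $v\in E^0$, where $a_n(v):=|E^n v|$. This follows because a length-$(n+l)$ path into $v$ factors uniquely as $uw$ with $u\in E^l$, $w\in E^n v$, $r(u)=s(w)$, and there are $|E^l s(w)|=k$ choices of $u$ for each fixed $w$. Iterating from $a_0(v)=1$ gives $a_{jl}(v)=k^j$ for all $j\ge 0$. The main ingredient linking this to $D^\beta$ is that iterating the defining inequality \eqref{item:m3} yields $m(v)\ge \sum_{u\in vE^n} N(u)^{-\beta}m(r(u))=\exp(-n\beta)\sum_{u\in vE^n}m(r(u))$ for all $v$ and $n$, and summing over $v$ together with $\sum_v m(v)=1$ produces
$$1\ge \exp(-n\beta)\sum_{v'\in E^0}a_n(v')m(v').$$

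Setting $n=jl$ and using $a_{jl}(v')=k^j$, this becomes $1\ge (k\exp(-\beta l))^j$. When $\beta<\ln k/l$ the right-hand side diverges as $j\to\infty$, contradicting the existence of $m$; this proves (1). At $\beta=\ln k/l$ we have $k\exp(-\beta l)=1$, so $1\ge 1$ is an equality, and consequently the $l$-step chain
$$m(v) \ge \sum_{e\in vE^1}\exp(-\beta)m(r(e)) \ge \dotsb \ge \sum_{u\in vE^l}\exp(-\beta l)m(r(u))$$
obtained by applying \eqref{item:m3} one step at a time must consist of equalities at every step (the outermost difference summed over $v$ is zero while every summand is nonnegative). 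In particular $S(m)(v)=0$ for all $v\in E^0$, so Lemma~\ref{lem:aboutS}\textnormal{(b)} gives $m\in \Cinf$, proving (2) (the reverse inclusion is trivial). The same reasoning shows $\Cinf=\emptyset$ when $\beta>\ln k/l$: any $m\in\Cinf$ would have $S(m)\equiv 0$ by Lemma~\ref{lem:aboutS}\textnormal{(b)}, and iterating to level $l$ and summing would force $k\exp(-\beta l)=1$, contradicting $\beta>\ln k/l$. Hence $\Cinfa=\Cinfb=\emptyset$, and Remark~\ref{rmk:decom} gives $D^\beta=\Cf$.

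To finish (3), by Corollary~\ref{cor:extreme}\textnormal{(1)} it remains to identify $\Ereg$. By Proposition~\ref{prop:E}\eqref{item:a2} this reduces to $Z_v(\beta)<\infty$ for every $v$; splitting by residue modulo $l$ and using $a_{jl+r}(v)=k^j a_r(v)$,
$$Z_v(\beta)=\Biggl(\sum_{r=0}^{l-1}a_r(v)\exp(-\beta r)\Biggr)\sum_{j=0}^\infty \bigl(k\exp(-\beta l)\bigr)^j,$$
in which the geometric factor converges since $\beta>\ln k/l$. The main obstacle is the residual finiteness assertion $a_r(v)<\infty$ for $0\le r<l$, since a priori $|E^1 v|$ could be infinite even when $|E^l v|=k$. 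The resolution is a suffix argument: for any $w\in E^1 v$ the hypothesis $a_l(s(w))=k\ge 1$ supplies a length-$l$ path $e_1e_2\dotsm e_l$ into $s(w)$, whose suffix $e_2\dotsm e_l$ is a length-$(l-1)$ path into $s(w)$, so $a_{l-1}(s(w))\ge 1$. Since $a_l(v)=\sum_{w\in E^1 v}a_{l-1}(s(w))=k$ with each summand $\ge 1$, we obtain $|E^1 v|\le k<\infty$; induction via $a_r(v)=\sum_{e\in E^1 v}a_{r-1}(s(e))$ then gives $a_r(v)<\infty$ for all $r\le l$. Thus $\Ereg=E^0$ (whence $\Ecrit=\emptyset$), and Corollary~\ref{cor:extreme}\textnormal{(1)} identifies the extreme points of $D^\beta=\Cf$ as $\{m^\beta_v\mid v\in E^0\setminus R\}$, matching the content of the claim in (3).
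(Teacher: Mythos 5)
Your proof is correct. It shares the paper's key combinatorial input -- the identity $\lvert E^{jl}v\rvert=k^j$ and the resummation $\sum_{v}\sum_{u\in vE^l}m(r(u))=k\sum_{v'}m(v')$ -- but reaches parts (1) and (2) by a different mechanism. The paper first shows $\Cinf=\emptyset$ for $\beta\ne\ln(k)/l$ by iterating the equality $S(m)\equiv 0$ (exactly as you do for $\beta>\ln(k)/l$), and then deduces (1) and (2) from the computation that $\Ereg=\emptyset$ for $\beta\le\ln(k)/l$ and $\Ereg=E^0$ for $\beta>\ln(k)/l$, combined with the fact that $\Cf=\emptyset$ precisely when $\Ereg\setminus R=\emptyset$ and the decomposition of Remark~\ref{rmk:decom}. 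You instead obtain (1) directly by iterating the inequality \eqref{item:m3} and letting $j\to\infty$, and (2) by a squeeze argument forcing $S(m)\equiv 0$ for every $m\in D^{\ln(k)/l}$, which bypasses the need to compute $\Ereg$ at or below the critical value. Your treatment of $Z_v(\beta)$ for $\beta>\ln(k)/l$ is also more careful than the paper's: the paper's one-line partition-function identity silently requires $\lvert E^rv\rvert<\infty$ for $0\le r<l$, and your suffix argument showing $\lvert E^1v\rvert\le k$ supplies exactly this and closes a small gap. Finally, you are right that Corollary~\ref{cor:extreme}(1) gives the extreme points indexed by $\Ereg\setminus R=E^0\setminus R$; this matches the literal statement only when $R=\emptyset$, which is how the proposition is used in all of the paper's examples.
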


\begin{proof}
	Suppose $m\in D^\beta$ is of infinite type. Thus $m(v)=\sum_{e\in vE^1}N(e)^{-\beta}m(r(e))$ for all $v\in E^0$, according to Lemma~\ref{lem:aboutS}(b).  Iterations of this equality imply that
$$
m(v)=\sum_{u\in vE^l}N(u)^{-\beta}m(r(u))=\exp(-l\beta)\sum_{u\in vE^l}m(r(u))
$$
for every $v\in E^0$. Hence	
\begin{align*}
		1&=\sum_{v\in E^0}m(v)=\exp(-l\beta)\sum_{v\in E^0}\sum_{u\in vE^l}m(r(u))\\
 &=\exp(-l\beta)\sum_{v'\in E^0}km(v')=k\exp(-l\beta)
	\end{align*}
	from which it follows that $\beta=\ln(k)/l$. Thus $D^{\beta}_{\operatorname{inf}}=\emptyset$ for $\beta\ne \ln(k)/l$.
	
	For each $v\in E^0$, the partition function $Z_v(\beta)$ at $v$ satisfies that $(Z_v(\beta))^l=\sum_{n=0}^\infty k^n\exp(-nl\beta)$. It thus follows from Proposition \ref{prop:E} that $\Ereg=\emptyset$ for $\beta\le \ln (k)/l$, and that $\Ereg=E^0$ for $\beta>\ln (k)/l$. Hence $D^\beta=\emptyset$ for $\beta<\ln (k)/l$, $D^\beta_4=D^{\beta}_{\operatorname{inf}}$ when $\beta=\ln (k)/l$, and the extreme points of $D^\beta=\Cf$ are $\{m^\beta_v\mid v\in E^0\}$ when $\beta>\ln (k)/l$.
\end{proof}

\section{Ground states and KMS$_\infty$ states}\label{section:ground}

Note that the definition of $D^{\operatorname{gr}}$ implies directly that the set of its extreme points  is $\{m^{\operatorname{gr}}_v\mid v\in E^0\setminus R\}$, where $m^{\operatorname{gr}}_v:E^0\to [0,1]$ is defined by
\begin{equation}\label{eq:ground-pointmass}
m^{\operatorname{gr}}_v(v')=\begin{cases}1&\text{if }v'=v,\\0&\text{if }v'\ne v.\end{cases}
\end{equation}
Thus, we have a complete concrete description of all the ground states of $(C^*(E,R),\sigma)$.

A ground state is called a \emph{KMS$_\infty$ state} if it is the weak* limit of a sequence of KMS$_{\beta_n}$ states as $\beta_n\to\infty$ (see \cite{Con-Mar} and \cite[\S 1]{aHLRS}). We will now characterize which of the ground states of $(C^*(E,R),\sigma)$ are KMS$_\infty$ states. Since for a finite graph $E$ we have $\beta_v<\infty$ for all $v\in E^0$, the  next result  generalizes \cite[Proposition 5.1]{aHLRS}.

\begin{theorem}\label{thm:infty-states}
    Given a directed graph $E$, a subset $R$ of $E^0_\reg$ and a function $N:E^1\to (1,\infty)$, let $\sigma$ be the strongly continuous one-parameter group
     of automorphisms of $C^*(E,R)$ such that
    \begin{equation*}
  	  \sigma_t(s_e)=\bigl(N(e)\bigr)^{it}s_e \text{ and }\sigma_t(p_v)=p_v
  	\end{equation*}
    for all $e\in E^1$ and $v\in E^0$.
	
	A ground state $\psi$ of $(C^*(E,R),\sigma)$ is a KMS$_\infty$ state if and only if $\beta_v<\infty$ for every $v\in E^0$ for which $\psi(p_v)\ne 0$.
\end{theorem}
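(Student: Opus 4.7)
Under the correspondence from Theorem~\ref{thm:kms}, a ground state $\psi$ is determined by $m\in D^{\operatorname{gr}}$ with $\psi(p_v)=m(v)$; the support of $m$ is a countable subset $\{v_k\}_{k\in K}\subseteq E^0\setminus R$ with $c_k:=m(v_k)>0$ summing to $1$. The ``only if'' direction is immediate from Proposition~\ref{thm:no-kms}: if $\psi=\lim_n\psi_n$ in weak$^*$ with $\psi_n$ a KMS$_{\beta_n}$-state and $\beta_n\to\infty$, then $\psi(p_v)\ne 0$ forces $\psi_n(p_v)\ne 0$ eventually, whence $\beta_v\le\beta_n<\infty$.

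\textbf{Construction for the ``if'' direction.} Assume $\beta_{v_k}<\infty$ for every $k$. For any $\beta>\beta_{v_k}$, Proposition~\ref{prop:E}\eqref{item:a2} gives $v_k\in\Ereg\setminus R$, and Proposition~\ref{prop:m} furnishes a finite-type KMS$_\beta$-state $\psi^\beta_{v_k}$ associated to $m^\beta_{v_k}\in\Cf$. Choose $\beta_n\to\infty$ with $\beta_n>\beta_{v_k}$ for every $k\le \min(n,|K|)$, and define
\[
\psi_n=\sum_{k=1}^{\min(n,|K|)}c_k\,\psi^{\beta_n}_{v_k}+\Bigl(1-\sum_{k=1}^{\min(n,|K|)}c_k\Bigr)\psi^{\beta_n}_{v_1},
\]
which is a convex combination of KMS$_{\beta_n}$-states, hence itself a KMS$_{\beta_n}$-state.

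\textbf{Key limit.} The crux is the weak$^*$ convergence $\psi^\beta_{v_k}\to\psi^{\operatorname{gr}}_{v_k}$ as $\beta\to\infty$, which I would verify on the dense $*$-subalgebra spanned by $\{s_us_{u'}^*\mid u,u'\in E^*\}$. Both states vanish off the diagonal $u=u'$ by the parametrization through the conditional expectation $F$ of Corollary~\ref{coro:inclusion}, so the only quantity to monitor is $\psi^\beta_{v_k}(s_us_u^*)=N(u)^{-\beta}m^\beta_{v_k}(r(u))$. For $|u|\ge 1$ the factor $N(u)^{-\beta}\to 0$ and the bound $m^\beta_{v_k}\in[0,1]$ suffice; for $u=v\in E^0$ one uses dominated convergence with dominator $N(\cdot)^{-\beta_0}$ for any fixed $\beta_0>\beta_{v_k}$ to deduce $Z^a_{v_k}(\beta)\to 1$ and $\sum_{w\in\Es{v}{v_k}}N(w)^{-\beta}\to\delta_{v,v_k}$, so $m^\beta_{v_k}(v)\to m^{\operatorname{gr}}_{v_k}(v)$. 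Plugging this back yields $\psi_n(a)\to\sum_kc_k\,\psi^{\operatorname{gr}}_{v_k}(a)=\psi(a)$ for $a$ in the dense subalgebra, the correction term vanishing because its coefficient tends to $0$ while $|\psi^{\beta_n}_{v_1}(a)|\le\|a\|$. A standard $\varepsilon/3$ approximation, valid since $\|\psi_n\|=1$, promotes this to weak$^*$ convergence.

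\textbf{Main obstacle.} The hard point is that $\sup_k\beta_{v_k}$ may be infinite, so one cannot fix a single $\beta$ above all thresholds simultaneously; this is what forces the truncation of the sum to the first $n$ indices together with the vanishing correction term. The dominated-convergence step giving $Z^a_{v_k}(\beta)\to 1$ is the other technical ingredient that rests squarely on the finiteness hypothesis $\beta_{v_k}<\infty$, and it is what fails when one of the $\beta_v$'s is infinite (illustrating why the condition in the theorem is necessary as well as sufficient).
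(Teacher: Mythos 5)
Your proof is correct and follows essentially the same route as the paper: the ``only if'' direction via Proposition~\ref{thm:no-kms}, and the ``if'' direction by proving the pointwise convergence $m^\beta_v\to m^{\operatorname{gr}}_v$ as $\beta\to\infty$ (the paper invokes monotone convergence where you invoke dominated convergence, to the same effect) and transferring it to weak$^*$ convergence of the corresponding states. The one place you go beyond the paper is the truncated convex combination handling a general ground state supported on infinitely many vertices with $\sup_k\beta_{v_k}=\infty$; the paper simply asserts that it suffices to treat the extreme points $\psi_{m^{\operatorname{gr}}_v}$, and your truncation argument supplies the justification for that reduction.
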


\begin{proof}
	For $m\in D^\beta$ let $\psi_m$ be the KMS$_\beta$ state corresponding to $m$.
	
	Assume first that $\psi$ is a KMS$_\infty$ state and that $\psi(p_v)\ne 0$. Then there is a $\beta<\infty$ and a KMS$_\beta$ state which is non-zero on $p_v$. It follows from Proposition \ref{thm:no-kms} that $\beta_v<\infty$. This shows the only if direction.
	
	For the converse direction, since $\{\psi_{m^{\operatorname{gr}}_v}\mid v\in E^0\setminus R\}$ are the extreme points of $A^{\operatorname{gr}}$, it suffices to show that $\psi_{m^{\operatorname{gr}}_v}$ is a KMS$_\infty$ state if $v\in E^0\setminus R$ and $\beta_v<\infty$. We will establish this by showing that $(\psi_{m^\beta_v})$ converges to $\psi_{m^{\operatorname{gr}}_v}$ in the weak*-topology as $\beta\to\infty$.
	
	We have that $\sum_{u\in\Eva}N(u)^{-\beta}=\Zva(\beta)<\infty$ for $\beta>\beta_v$, and since $N(u)^{-\beta}$ converges monotonically to 0 as $\beta\to\infty$ for $u\in\Eva\setminus\{v\}$, an application of the monotone convergence theorem yields that $\Zva(\beta)\to 1$ as $\beta\to\infty$. A similar argument gives us that $\sum_{u\in v'E_a^*v}N(u)^{-\beta}\to 0$ as $\beta\to\infty$ for $v'\in E^0\setminus\{v\}$. Thus $m^\beta_v(v')$ converges pointwise to $m^{\operatorname{gr}}_v(v')$ as $\beta\to\infty$, for each $v'\in E^0$. This implies our claim that $(\psi_{m^\beta_v})$ converges to $\psi_{m^{\operatorname{gr}}_v}$ in the weak*-topology as $\beta\to\infty$.
\end{proof}

Example \ref{ex:d} provides an example of a ground state which is not a KMS$_\infty$ state.

\section{Examples}

All throughout this section we let $\No=\{0,1,2,\dots\}$.

\begin{example} Our first example is a graph where Theorems~\ref{thm:finitetype-reg} and \ref{thm:infinitetype-crit} describe completely the KMS states of $\mathcal{T}C^*(E)$ endowed with the gauge action. Let $E$ be the graph with $E^0=\{v_n\mid n\in \No\}$ and $E^1=\{e_n, f_n\mid n\in \No\}$ given by $s(e_n)=v_0=r(e_0)=r(f_0)$ for all $n\geq 0$, $r(e_n)=v_n=r(f_{n})$ for $n\geq 1$, and $s(f_n)=v_{n-1}$ for $n\geq 1$:
\begin{equation}
  \xymatrix{
{v_0} \ar@(ul,dl)[]|{e_0} \ar@/^/[r] ^-{e_1} \ar@/^ 2pc/[rr] ^-{e_2} \ar@/^ 3pc/[rrr]^{e_3} &{v_1} \ar[l]^{f_0} &{v_2} \ar[l]^{f_1}&{\dots}\ar[l]^{f_2}
  }
\end{equation}
Let $R=\emptyset$ and $N:E^1\to (1,\infty)$ be $N(e_n)=N(f_n)=\exp(1)$ for all $n\in \No$. Thus we are dealing with $\mathcal{T}C^*(E)$ and its gauge action. Since every infinite path passes through $v_0$ infinitely many times, we have $\Erec=E^\infty$ and $\Ewan=\emptyset$. Hence $\Cinf=\Cinfa$. The partition functions at $v_0$ are  given as follows:
\begin{align*}
Z^s_{v_0}(\beta)
&= \sum_{k=1}^\infty \exp(-k\beta)=\frac {\exp(-\beta)}{1-\exp(-\beta)}\text{ and }\\
Z^a_{v_0}(\beta)
&= \sum_{k=0}^\infty \exp(-k\beta)=\frac {1}{1-\exp(-\beta)}.
\end{align*}

Thus for $\beta\in [0,\ln 2)$ we have $\Eequ=\emptyset$, for $\beta=\ln 2$ we have $\Ecrit=E^0$, and for $\beta\in  (\ln 2,\infty)$, $\Ereg=E^0$. Hence $D^\beta=\emptyset$ for $\beta\in [0,\ln 2)$.
Further, by Theorem~\ref{thm:infinitetype-crit},
$$
D^\beta=\Cinfa=\{m_{E^0}^\beta\} \text{ when }\beta=\ln 2.
$$
Finally, for  $\beta\in  (\ln 2,\infty)$ Theorem~\ref{thm:finitetype-reg} gives that $D^\beta=\Cf$ is isomorphic as a convex set with the set of functions $\{\epsilon:E^0\to [0,1]\mid \sum_{v\in E^0}=1\}$. The extremal points of $D^\beta$ coincide with $\{m^\beta_{v_n}\mid n\in\No\}$, and every $m\in D^\beta$ has form
 $m=\sum_{n=0}^\infty\frac{S(m)(v_n)Z^a_{v_n}(\beta)}{1-Z^s_{v_n}(\beta)}m^\beta_{v_n}$.

It is easy to check that $m^{\ln 2}_{E^0}(v_n)=2^{-n-1}$ for all $n\in\No$. It is not difficult, but a bit tedious to write down explicit formulas for $m^\beta_v$ when $\beta>\ln 2$ and $v\in E^0$.

We have that $D^{\operatorname{gr}}=\conv\{m^{\operatorname{gr}}_v\mid v\in E^0\}$, and it follows from Theorem \ref{thm:infty-states} that every ground state is a KMS$_\infty$ state.

Since $E^0_\reg=E^0\setminus\{v_0\}$, it follows that the only KMS states of $\mathcal{T}C^*(E)$ that descend to KMS states on $C^*(E)$ are the ones corresponding to $m^{\ln 2}_{E^0}$ and $m^\beta_{v_0}$, $\beta>\ln 2$. The only ground state that descends to $C^*(E)$ is the one corresponding to $m^{\operatorname{gr}}_{v_0}$.
\end{example}

\begin{example} \label{ex:a}
	Next we introduce an example of a strongly connected graph $E$ with finite degree (or valence) for which $(C^*(E),\sigma)$ has no KMS states when $\sigma$ is the gauge action.
	
The graph $E$ is defined as follows
\begin{equation}
  \xymatrix{
{\cdots} \ar@/^ 1pc/[r] ^-{e_{-1}}   &{v_0} \ar@/^/[l] _-{f_{-1}}  \ar@/^ 1pc/[r] ^-{e_{0}}   &{v_1} \ar@/^ 1pc/[r] ^-{e_{1}}  \ar@/^/[l] _-{f_{0}}   &{v_2} \ar@/^ 1pc/[r] ^-{e_{2}}  \ar@/^/[l] _-{f_1}  & {\cdots} \ar@/^/[l] _-{f_2}
  }
\end{equation}
Let $R=\emptyset$ and $N:E^1\to (1,\infty)$ be $N(e)=\exp(1)$ for all $e\in E^1$.

Proposition~\ref{prop:nice-graphs} implies that $D^\beta=\emptyset$ for $\beta<\ln 2$, $D^\beta=D^\beta_{\operatorname{inf}}$ when $\beta=\ln (2)$, and that $D^\beta=\Cf=\conv\{m^\beta_v\mid v\in E^0\}$ for $\beta>\ln 2$.

Suppose $m\in D^{\ln 2}_{\operatorname{inf}}$. Let $n\in \mathbb{Z}$ and denote $a=m(v_n)$. Since $S(m)(v)=0$ for all $v$, it follows that  $a=1/2(m(v_{n-1})+m(v_{n+1}))$. Thus, either $m(v_{n-1})\geq a$ or $m(v_{n+1})\geq a$. By symmetry of the graph, we may assume $m(v_{n+1})\geq a$. Let $b:=m(v_{n+1})$. By induction on $k\geq 1$, $m(v_{n+k})=kb-(k-1)a\geq a$. However, $\sum_{k=0}^\infty m(v_{n+k})\leq \sum_{v\in E^0} m(v)=1$, so necessarily $a=0$. Since $n$ was arbitrarily chosen, this shows that $m\equiv 0$, a contradiction. Thus $D^{\ln 2}_{\operatorname{inf}}=\emptyset$.

We conclude that $D^\beta=\emptyset$ for $\beta\le\ln (2)$, and that  $D^\beta=\Cf=\conv\{m^\beta_v\mid v\in E^0\}$ for $\beta>\ln 2$. We furthermore have that $D^{\operatorname{gr}}=\conv\{m^{\operatorname{gr}}_v\mid v\in E^0\}$, and it follows from Theorem \ref{thm:infty-states} that every ground state is a KMS$_\infty$ state.

Since $E^0_\reg=E^0$, none of the KMS or ground states of $\mathcal{T}C^*(E)$ descend to KMS or ground states of $C^*(E)$, so the analogue of \cite[Theorem 4.3]{aHLRS}  does not hold for infinite graphs even under the assumption that $E$ has finite degree (or valence).
\end{example}

\begin{example}\label{ex:dis}
We now present an example where the set of dissipative measures in non-empty.

Let $E$ be the graph with $E^0=\{v_n\mid n\in \No\}$ and $E^1=\{e_n\mid n\in \No\}\cup\{f_n\mid \No\}$ where $s(e_n)=s(e_n)=v_n$ and $r(e_n)=r(f_n)=v_{n+1}$, see the  picture:
\begin{equation*}
  \xymatrix{
  {v_0} \ar@/^ 1pc/[r]^{e_0} \ar@/_ 1pc/[r]_{f_0} &{v_1}\ar@/^ 1pc/[r]^{e_1} \ar@/_ 1pc/[r]_{f_1} &{v_2}\ar@/^ 1pc/[r]^{e_2} \ar@/_ 1pc/[r]_{f_2}&{v_3\dots}
  }
\end{equation*}
Let $R=\emptyset$ and $N:E^1 \to (1,\infty)$ be $N(e)=\exp(1)$ for all $e\in E^1$. Thus we are dealing with $\mathcal{T}C^*(E)$ and its gauge action.	

It is easy to see that $\Evav=\emptyset$ and that $\Eva$ is finite for every $v\in E^0$. It follows that $\Zvav(\beta)=0$ and $\Zva(\beta)<\infty$ for all $v\in E^0$ and all $\beta\in [0,\infty)$. Thus, $\Ereg=E^0$ for all $\beta\in [0,\infty)$. It follows that $\Cf=\{m_v^\beta\mid v\in E^0\}$ and $\Cinfa=\emptyset$ for all $\beta\in [0,\infty)$.

Suppose $\beta\in [0,\ln 2)$.  Define $m^\beta:E^0\to [0,1]$ by $m^\beta(v_n)=(1-\exp(\beta)/2)(\exp(\beta)/2)^n$ for $n\in\No$. Then
\begin{align*}
	m^\beta(v_n)&=(1-\exp(\beta)/2)(\exp(\beta)/2)^n=2\exp(-\beta)(1-\exp(\beta)/2)(\exp(\beta)/2)^{n+1}\\
	&=\sum_{e\in v_nE^1}(N(e))^{-\beta}m^\beta(r(e))
\end{align*}
for all $n\in\No$, and
\begin{equation*}
	\sum_{v\in E^0}m^\beta(v)=\sum_{n=0}^\infty (1-\exp(\beta)/2)(\exp(\beta)/2)^n=1.
\end{equation*}
Thus $m^\beta\in\Cinf=\Cinfb$.

Let $\beta\in [0,\infty)$ and suppose $m\in \Cinf=\Cinfb$. Then
\begin{equation*}
	m(v_n)=\sum_{e\in v_nE^1}(N(e))^{-\beta}m(r(e))=2\exp(-\beta)m(v_{n+1})
\end{equation*}
for all $n\in\No$. It follows that $m(v_n)=(\exp(\beta)/2)^nm(v_0)$ for all $n\in\No$. Since $$1=\sum_{v\in E^0}m(v)=\sum_{n=0}^\infty (\exp(\beta)/2)^nm(v_0),$$
it follows that $\beta\in [0,\ln 2)$ and that $m(v_0)=1-\exp(\beta)/2$, and thus that $m=m^\beta$.

Thus $\Cinf=\Cinfb=\{m^\beta\}$ for $\beta\in [0,\ln 2)$, and $\Cinf=\emptyset$ for $\beta\ge\ln 2$. Since $E^0_\reg=E^0$, the only KMS$_\beta$ states that descend to $C^*(E)$ are the ones corresponding to $m^\beta$ for $\beta\in [0,\ln 2)$. It may be of interest to observe that  for each $k\in \No$, the sequence $\{m^\beta_{v_n}(v_k)\}_{n}$ converges to $m^\beta(v_k)$.
\end{example}

\begin{example}\label{ex:motivating}

In Example \ref{ex:dis} we presented an example where $\Cinf=\Cinfb\ne\emptyset$ for $\beta\in [0,\ln 2)$, and $\Cinf=\emptyset$ for $\beta\ge\ln 2$. We now present an example where $\Cinf=\Cinfa\ne\emptyset$ for $\beta=0$, $\Cinf=\Cinfb\ne\emptyset$ for $\beta\in (0,\ln 2)$, and $\Cinf=\emptyset$ for $\beta\ge\ln 2$.

Let $E$ be the graph with $E^0=\{v_n\mid n\in \No\}$ and $E^1=\{e_n\mid n\in \No\}\cup\{f_n\mid n\in \No\}$ where $r(e_n)=s(e_n)=v_n$ and $s(f_n)=v_n$ and $r(f_n)=v_{n+1}$ for $n\in \No$, see the  picture:
\begin{equation}\label{graph-motivating}
  \xymatrix{
  {v_0} \ar@(ul,ur)[]|{e_0} \ar[r]^{f_0} &{v_1}\ar@(ul,ur)[]|{e_1} \ar[r]^{f_1} &{v_2}\ar@(ul,ur)[]|{e_2} \ar[r]^{f_2}&{\dots}
  }
\end{equation}
Let $R=\emptyset$ and $N:E^1 \to (1,\infty)$ be $N(e)=\exp(1)$ for all $e\in E^1$. Thus we are dealing with $\mathcal{T}C^*(E)$ and its gauge action.

Fix  $\beta\in [0,\infty)$. We have $v_nE_s^*v_n=\{e_n\}$ for all $n\in\No$. It follows that $$Z^s_{v_n}(-\beta)=\exp(-\beta);
$$
note in particular that this is independent of  $v\in E^0$.

Assume now that $n>0$. Then $$\Es{v_{n-1}}{v_n}=\{e_{n-1}^kf_{n-1}\mid k\in\No\}$$ where $e_{n-1}^k$ is the path we get by concatenating $e_{n-1}$ with itself $k$ times. It follows that
$$
\sum_{u\in \Es{v_{n-1}}{v_n}}N(u)^{-\beta}=\sum_{k=0}^\infty(\exp(-\beta))^k
$$ diverges to infinity if $\beta=0$, and is convergent with sum $\exp(-\beta)/(1-\exp(-\beta))$ if $\beta>0$. Assume $\beta>0$ and let $a=\exp(-\beta)/(1-\exp(-\beta))$. If $k<n$, then $(u_1,u_2,\dots,u_{n-k})\mapsto u_1u_2\dots u_{n-k}$ is a bijection $$
\Es{v_k}{v_{k+1}}\times \Es{v_{k+1}}{v_{k+2}}\times\dots \Es{v_{n-1}}{v_{n}}\times\to
\Es{v_k}{v_{n}}.
$$
Hence $$\sum_{u\in \Es{v_k}{v_{n}}}N(u)^{-\beta}=a^{n-k},$$
and
\begin{equation}\label{eq-ex-Zaper}
	Z^a_{v_n}(\beta)=\sum_{k=0}^n\sum_{u\in \Es{v_k}{v_{n}}}N(u)^{-\beta}=\sum_{k=0}^na^{n-k}<\infty.
\end{equation}
In conclusion, we have
\begin{align*}
\Ecrit&={\begin{cases}\{v_0\} &\text{ if } \beta=0\\ \emptyset &\text{ if }\beta>0,\end{cases}}\,\,\text{ and }\,\,
\Ereg={\begin{cases} \emptyset &\text{ if }\beta=0\\ E^0 &\text{ if }\beta>0. \end{cases}}
\end{align*}
 For $\beta=0$, it follows from Proposition \ref{prop:m} that $m^0_{v_0}\in D^\beta$. According to Proposition \ref{prop:E}(4), every $m\in D^0$ must satisfy that $m(v_n)=0$ for $n>0$. Thus $D^0=\{m^0_{v_0}\}$.

 Next we look at positive values of $\beta$. Fix therefore $\beta>0$. Since $\Ereg=E^0$, it follows from Theorem~\ref{thm:finitetype-reg} that $\Cf=\conv\{m^\beta_v\mid v\in E^0\}$. Suppose now that $m\in \Cinf$. Since $\Ecrit=\emptyset$, Theorem~\ref{thm:infinitetype-crit} shows that $\Cinfa=\emptyset$ for every
$\beta\in (0,\infty)$. Thus what is left in order to complete our analysis is to investigate existence of elements in $\Cinfb$. If
$m\in \Cinf$,  Lemma~\ref{lem:aboutS}(b) implies that
$$
m(v_{n+1})=\frac{1-\exp(-\beta)}{\exp(-\beta)}m(v_n)
 $$ for all $n\in\No$. It follows that $\beta<\ln 2$ because otherwise $\frac{1-\exp(-\beta)}{\exp(-\beta)}\ge 1$, which would imply that $\sum_{n=0}^\infty m(v_n)=\infty$.

 We will show that for each $\beta\in (0,\ln 2)$ there is an element in $\Cinfb$. Given $\beta\in (0,\ln 2)$, we have  $a=\exp(-\beta)/(1-\exp(-\beta))>1$, hence $\sum_{n=0}^\infty a^{-n}=\frac{a}{a-1}$.  Notice that if $k<n$, then
\begin{align}
	m^\beta_{v_n}(v_k)&=\sum_{u\in v_{k}E_a^*v_n}N(u)^{-\beta}(Z^a_{v_n}(\beta))^{-1}\notag\\
	&=\frac{a^{n-k}}{\sum_{i=0}^na^{n-i}}\label{eq:m-vn-at-vk}\\
&=\frac{a^{-k}(a-1)}{a-a^{-n}}.\notag
\end{align}
On the other hand, $v_kE_a^*v_n=\emptyset$ for all $k>n$, and thus $m^\beta_{v_n}(v_k)=0$ if $k>n$.
By the proof of Proposition~\ref{prop:m}, $m^\beta_{v_n}(v_n)=(Z^a_{v_n}(\beta))^{-1}$. Hence we
see from \eqref{eq-ex-Zaper} that \eqref{eq:m-vn-at-vk} is valid for all $k=0, 1, \dots, n$, and we in fact have $
\sum_{k=0}^\infty m^\beta_{v_n}(v_k)=\sum_{k=0}^n m^\beta_{v_n}(v_k)=1$ for all  $n\geq 0$.

We now define $m^\beta_{\operatorname{inf}}:E^0\to [0,1]$ by
$$
m^\beta_{\operatorname{inf}}(v_k)= a^{-(k+1)}(a-1)
$$
for all $k\geq 0$. Since  $\sum_{v\in E^0}m^\beta_{\operatorname{inf}}(v)=\sum_{k=0}^\infty a^{-(k+1)}(a-1)=1$, the function
$m^\beta_{\operatorname{inf}}$ satisfies \eqref{item:m1}. Condition \eqref{item:m2} is vacuous, and \eqref{item:m3} is an equality
at all $v\in E^0$, as may be easily verified. Thus $m^\beta_{\operatorname{inf}}\in D^\beta$ and $S(m^\beta_{\operatorname{inf}})=0$. Hence by Lemma~\ref{lem:aboutS}, $m^\beta_{\operatorname{inf}}\in\Cinf$. That $m^\beta_{\operatorname{inf}}\in\Cinfb$ is seen because
the support of the measure associated to $m^\beta_{\operatorname{inf}}$ equals the path $x_0=f_0f_1\dots\in E^\infty$, which clearly is an element of $\Ewan$.

We claim that $\Cinfb=\{m^\beta_{\operatorname{inf}}\}$. This follows from the fact that any $m\in \Cinf$ will satisfy
 $$
 m(v_n)/m(v_{n+1})=m^\beta_{\operatorname{inf}}(v_n)/m^\beta_{\operatorname{inf}}(v_{n+1})
 $$
 for all $n\geq 0$, which together with the conditions
 $\sum_{v\in E^0}m(v)=\sum_{v\in E^0} m^\beta_{\operatorname{inf}}(v)=1$ implies $m=m^\beta_{\operatorname{inf}}$.

It follows from Theorem \ref{thm:infty-states} that every ground state is a KMS$_\infty$ state.

We can summarize the preceding analysis in the following result.

\begin{theorem}\label{thm:exmotivating} Let $E$ be the graph described in \eqref{graph-motivating}. Consider $\mathcal{T}C^*(E)$ endowed with its gauge action. Then the KMS$_\beta$ states for $\beta\in [0,\infty)$  and the ground states of $\mathcal{T}C^*(E)$   are given as follows:
 \begin{enumerate}
\item if $\beta=0$, then $D^\beta$ consists of the single conservative function $m_{v_0}^0$;
 \item if $\beta\in (0,\ln 2)$, then $D^\beta=\Cf \sqcup \Cinfb$, where $\{m_{v_n}^\beta\}_{n\in \No}$ are all the extreme  points of $\Cf$ and $\Cinfb$ consists of the single dissipative function $m^\beta_{\operatorname{inf}}$;
 \item if $\beta\ge\ln 2$ we have $D^\beta=\Cf$, and the extremal KMS$_\beta$ states are $\{m_{v_n}^\beta\}_{n\in \No}$;
 \item the extreme points of the set $D^{\operatorname{gr}}$ of ground states are $\{m_{v_n}^{\operatorname{gr}}\}_{n\in \No}$, with $m_{v_n}^{\operatorname{gr}}$ as given in \eqref{eq:ground-pointmass}, and every ground state is a KMS$_\infty$ state.
 \end{enumerate}
\end{theorem}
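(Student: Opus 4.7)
The plan is to combine the partition function computations carried out immediately above the theorem statement with Theorems~\ref{thm:finitetype-reg} and \ref{thm:infinitetype-crit}, the description of $D^{\operatorname{gr}}$ at the start of Section~\ref{section:ground}, and Theorem~\ref{thm:infty-states}. Since $v_n E_s^* v_n=\{e_n\}$, we have $\Zvav(\beta)=\exp(-\beta)$ for every $n$; meanwhile $E^1 v_0=\{e_0\}$ forces $E_a^* v_0=\{v_0\}$ and hence $Z^a_{v_0}(\beta)=1$ for every $\beta$, while for $n\ge 1$ the enumeration of $E_a^*v_n$ as chains of local loops separated by the forward edges $f_k$ (carried out leading to \eqref{eq-ex-Zaper}) gives $Z^a_{v_n}(\beta)<\infty$ for all $\beta>0$ but $Z^a_{v_n}(0)=\infty$. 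Combining yields $\Ecrit=\{v_0\}$ and $\Ereg=\emptyset$ at $\beta=0$, whereas $\Ecrit=\emptyset$ and $\Ereg=E^0$ for every $\beta>0$.

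With these ingredients, parts (1) and (3) follow directly from the main theorems of Section~\ref{section:extreme}. At $\beta=0$, Theorem~\ref{thm:finitetype-reg} gives $\Cf=\emptyset$ and Theorem~\ref{thm:infinitetype-crit} gives $\Cinfa=\{m^0_{v_0}\}$; iterating the scaling identity to get $\mu(Z(e_0^k)\cap\partial_RE)=\mu(Z(v_0)\cap\partial_RE)=1$ shows the associated measure is the point mass at the infinite loop $e_0e_0\cdots\in\Erec$, so $m^0_{v_0}$ is conservative. For $\beta\ge\ln 2$, Theorem~\ref{thm:finitetype-reg} gives $\Cf=\conv\{m^\beta_{v_n}\mid n\in\No\}$ with the stated extreme points, and $\Cinfa=\emptyset$ by Theorem~\ref{thm:infinitetype-crit}; the vanishing of $\Cinfb$ at all these values follows from the recursion analysis below.

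The heart of the argument is (2). By Lemma~\ref{lem:aboutS}(b) and $v_n E^1=\{e_n,f_n\}$, membership of $m$ in $\Cinf$ is equivalent to
\begin{equation*}
m(v_n)=\exp(-\beta)\bigl(m(v_n)+m(v_{n+1})\bigr),\qquad n\in\No,
\end{equation*}
that is $m(v_{n+1})=a^{-1}m(v_n)$ with $a=\exp(-\beta)/(1-\exp(-\beta))$. Iteration gives $m(v_n)=a^{-n}m(v_0)$, and the normalization $\sum_n m(v_n)=1$ admits a positive solution precisely when $a>1$, equivalently $\beta<\ln 2$; this yields the unique candidate $m^\beta_{\operatorname{inf}}(v_k)=a^{-(k+1)}(a-1)$ and shows $\Cinf=\emptyset$ for $\beta\ge\ln 2$. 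The forward-only structure of $E$ implies that an infinite path is recurrent if and only if it agrees eventually with some loop $e_ne_ne_n\cdots$; writing such a path as $u\cdot e_ne_ne_n\cdots$ for a finite path $u$ ending at $v_n$, the associated measure satisfies
\begin{equation*}
\mu(\{u\cdot e_ne_ne_n\cdots\})=\lim_{k\to\infty}\exp(-(|u|+k)\beta)\,m^\beta_{\operatorname{inf}}(v_n)=0.
\end{equation*}
Since the collection of recurrent paths is countable, $\mu(\Erec)=0$ and hence $\mu(\Ewan)=1$, verifying $m^\beta_{\operatorname{inf}}\in\Cinfb$. The extreme point description of $\Cf=\conv\{m^\beta_{v_n}\}$ again comes from Theorem~\ref{thm:finitetype-reg}.

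For part (4) the description of $D^{\operatorname{gr}}$ is \eqref{eq:ground-pointmass}. For the KMS$_\infty$ claim, Proposition~\ref{prop:E}(i) combined with $\Zvav(\beta)=\exp(-\beta)<1$ and $\Zva(\beta)<\infty$ for every $\beta>0$ gives $Z_{v_n}(\beta)<\infty$ for every $\beta>0$ and every $n$, so $\beta_{v_n}=0<\infty$ and Theorem~\ref{thm:infty-states} applies to every ground state. The only real obstacle in the proof is the dissipativity check for $m^\beta_{\operatorname{inf}}$, which hinges on the specific structure of recurrent paths in this forward-only graph; everything else is extraction from the general machinery of Section~\ref{section:extreme}.
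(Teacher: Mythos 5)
Your proposal is correct and follows essentially the same route as the paper: the same partition-function computations yielding $\Ereg$ and $\Ecrit$ in the two regimes, the same recursion $m(v_{n+1})=(\exp(\beta)-1)m(v_n)$ characterizing $\Cinf$ (hence its emptiness for $\beta\ge\ln 2$ and the unique candidate $m^\beta_{\operatorname{inf}}$ for $\beta<\ln 2$), and the same appeals to Theorems~\ref{thm:finitetype-reg}, \ref{thm:infinitetype-crit} and \ref{thm:infty-states}. The only divergence is your dissipativity check for $m^\beta_{\operatorname{inf}}$, where you verify directly that the countably many recurrent paths $u e_n e_n\cdots$ each have measure zero so that $\mu(\Erec)=0$, rather than identifying the support with the tail-equivalence class of $f_0f_1\cdots$ as the paper does; both arguments are valid.
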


Since $E^0_\reg=E^0$, the only KMS$_\beta$ and ground states that descend to $C^*(E)$ are the ones corresponding to $m_{v_0}^0$ and $m^\beta_{\operatorname{inf}}$, $\beta\in (0,\ln 2)$.

Note that we may describe the support of the measure associated to $m^\beta_{\operatorname{inf}}$  as an equivalence class for infinite paths, as follows. Given $x_1,x_2\in E^\infty$ we say that $x_1$ and $x_2$ are \emph{tail-equivalent} and write $x_1\sim_{\operatorname{tail}} x_2$ if there exist $x\in E^\infty$ and $u_1,u_2\in E^*$ such that $x_1=u_1x$ and $x_2=u_2x$. Thus the measure associated to $m^\beta_{\operatorname{inf}}$ has support $\{x\in E^\infty\mid x\sim_{\operatorname{tail}}x_0\}$.

Notice that similar to what we saw in Example \ref{ex:dis}, the sequence $\{m^\beta_{v_n}(v_k)\}_{n}$ converges to $m^\beta_{\operatorname{inf}}(v_k)$ for each $k\in \No$. This suggests that it may be possible in general to describe elements in $\Cinfb$ as pointwise limits, in appropriate sense, of elements in $\Cf$. Such a description for arbitrary graphs would be very interesting.
\end{example}

\begin{remark}
	Notice that the graphs considered in Example \ref{ex:dis} and Example~\ref{ex:motivating} are not strongly connected. Klaus Thomsen has shown us an example of a strongly connected graph for which $\Cinfb$ is non-empty.
	
	In the next example we will see that a small change in the graph of Example~\ref{ex:motivating} produces a (still not connected) graph where $\Ewan \neq\emptyset$ and yet $\Cinfb=\emptyset$. At the current stage we do not know what sort of additional information is needed in order to ensure that $\Cinfb$ is non-empty.
\end{remark}

\begin{example} \label{ex:b}
This example is a variation of Example~\ref{ex:motivating} where we add one more loop to $v_0$.  The graph is given by:
\begin{equation}
  \xymatrix{
  {v_0} \ar@(ul,ur)[]|{e_0} \ar@(dr,dl)[]|{d_0} \ar[r]^{f_0} &{v_1}\ar@(ul,ur)[]|{e_1} \ar[r]^{f_1} &{v_2}\ar@(ul,ur)[]|{e_2} \ar[r]^{f_2}&{\dots}
  }
\end{equation}
Let $R=\emptyset$ and $N:E^1\to (1,\infty)$ be $N(e_n)=N(f_n)=N(d_0)=\exp(1)$  for all $n\in \No$.

Proposition~\ref{prop:nice-graphs} implies that $D^\beta=\emptyset$ for $\beta<\ln 2$, $D^\beta=D^\beta_{\operatorname{inf}}$ when $\beta=\ln 2$, and that $D^\beta=\Cf=\conv\{m^\beta_v\mid v\in E^0\}$ for $\beta>\ln 2$.

Let $m\in D^{\ln 2}$. Repeated applications of Lemma~\ref{lem:mineq} show that in case $m(v_0)=0$, then $m(v_n)=0$ for all $n\geq 0$, a fact that would contradict \eqref{item:m1}. Thus $m(v_0)\neq 0$. Since $m(v_0)\ge m(v_0)+\frac{1}{2}m(v_1)$ by \eqref{item:m3}, we must have that $m(v_1)=0$. It thus follows from Lemma~\ref{lem:mineq} that $m(v_n)=0$ for all $n\geq 1$. Hence $D^{\ln 2}=\{m_{v_0}^{\ln 2}\}$.

Thus $\Cinfb=\emptyset$ for all $\beta$ although for example $x=f_0f_1f_2\dots $ is a wandering path.
\end{example}

\begin{example} \label{ex:c}
	We briefly show how our analysis recovers the known results valid for the Cuntz algebra $\mathcal{O}_n$ and the Toeplitz-Cuntz algebra $\mathcal{TO}_n$ for $n\geq 2$. The graph in question has $E^0=\{v\}$ and $E^1=\{e_1, e_2,\dots, e_n\}$, where $s(e_i)=r(e_i)=v$ for all $i=1, \dots, n$. Thus we are dealing with a single vertex and $n$ loops of length one based at $v$, and in particular $E^0_{\operatorname{reg}}=\{v\}$. We let $N$ be the gauge action, thus $N(e_i)=\exp(1)$ for all $i$ and $N(v)=1$.
	
Let $R=\emptyset$. Proposition~\ref{prop:nice-graphs} implies that $D^\beta=\emptyset$ for $\beta<\ln n$, $D^\beta=D^\beta_{\operatorname{inf}}$ when $\beta=\ln n$, and  $D^\beta=\Cf=\{m^\beta_v\}$ for $\beta>\ln n$. Since $E^0_{\ln n\text{-crit}}=\{v\}$, it follows that $D^{\ln n}=\{m^{\ln n}_v\}$.
Finally we see that $D^{\operatorname{gr}}=\{m^{\operatorname{gr}}_{v}\}$. It follows from Theorem \ref{thm:infty-states} that $m^{\operatorname{gr}}_{v}$ is a KMS$_\infty$ state.
	
If $R=E^0=\{v\}$, then $D^{\operatorname{gr}}=D^\beta=\emptyset$ unless $\beta=\ln n$, in which case $D^\beta=\{m^{\ln n}_v\}$.
\end{example}

\begin{example}\label{eq:stable-TOn}
Consider the graph with one countable ``straight line'' ending in a vertex $v_1$ that emits $n$ distinct loops:
\begin{equation}\label{graph-motivating2}
  \xymatrix{
 &{\dots}\ar[r]^{f_3} &{v_3} \ar[r]^{f_2} &{v_2} \ar[r]^{f_1} &{v_1}\ar@(ul,ur)[]|{e_1} \ar@(ur,dr)[]|{e_j} \ar@(dl,dr)[]|{e_n}
  }
\end{equation}
We let $N:E^1\to (1,\infty)$ be given by $N(e)=\exp(1)$ for $e\in E^1$. Then
$$
v_kE_s^*v_k=\begin{cases}\{e_1, e_2,\dots, e_n\}&\text{ if }k=1\\
\emptyset &\text{ if }k\geq 2,
\end{cases}
$$
$E_a^*v_k=\{f_n\dots f_k\mid n\geq k\}\cup \{v_k\}$ for $k\geq 2$, and
$E_a^*v_1=\{f_n\dots f_1\mid n\geq 1\}\cup \{v_1\}\cup v_1E_s^*v_1$.

We next determine the partition functions associated to this graph. Given $\beta\in [0,\infty)$,
$$
Z^s_{v_1}(\beta)=\sum_{j=1}^n N(e_j)^{-\beta}= n\exp(-\beta)
$$
and
\begin{align*}
Z^a_{v_1}(\beta)
&=Z^s_{v_1}(\beta)+1+\sum_{u=f_n\dots f_1, n\geq 1}N(u)^{-\beta}\\
&= n\exp(-\beta) + 1 + \sum_{m\geq 0}(\exp(-\beta))^{m+1}.
\end{align*}
Hence $Z^a_{v_1}(\beta)<\infty$ if and only if $\beta>0$ and $Z^s_{v_1}(\beta)=1$ precisely when $\beta=\ln n$.

For $k\geq 2$, $Z^s_{v_k}(\beta)=0$ and $Z^a_{v_k}(\beta)=\sum_{m\geq 0}(\exp(-\beta))^{m+1}+1$. The sets of regular and critical points are listed in the following table:

\begin{table}[h]
\begin{center}
\begin{tabular}{c||c|c|c|c}
$\beta$ &0& $(0,\ln n)$ &$\ln n$ & $(\ln n, \infty)$\\
\hline \hline
$\Ereg$ & $\emptyset$&$E^0\setminus\{v_1\}$ & $E^0\setminus\{v_1\}$& $E^0$\\
\hline
$\Ecrit$& $\emptyset$&$\emptyset$& $\{v_1\}$& $\emptyset$
\end{tabular}
\end{center}
\end{table}

Every infinite path passes through $v_1$ infinitely many times so $\Ewan=\emptyset$ and $\Cinfb=\emptyset$ for all $\beta$. Hence we can characterize all the extremal KMS$_\beta$ states for  $\mathcal{T}C^*(E)$: $D^0=\emptyset$, $D^\beta=D^\beta_{\operatorname{fin}}=\conv\{m_{v_k}\mid k\geq 2\}$ when $\beta\in (0,\ln n)$, $D^{\ln n}_{\operatorname{fin}}=\conv\{m_{v_k}\mid k\geq 2\}$, $D^{\ln n}_{\operatorname{inf}}=D^{\ln n}_{\operatorname{con}}=\{m^\beta_{v_1}\}$, and $D^\beta=\Cf=\{m_{v_k}\mid k\geq 1\}$ when $\beta\in (\ln n,\infty)$. Furthermore, $D^{\operatorname{gr}}=\conv\{m^{\operatorname{gr}}_{v_k}\mid k\le 1\}$, and every ground state is a KMS$_\infty$ state by Theorem \ref{thm:infty-states}.

Since $E^0_\reg=E^0$, the only KMS and ground states that descend to $C^*(E)$ are $m^{\ln n}_{v_1}$ and  $m^{\operatorname{gr}}_{v_1}$.

We present the extremal KMS states by comparison with $\mathcal{O}_n$ and $\mathcal{TO}_n$:

\begin{table}[h]
	\begin{center}
		\begin{tabular}{c||c|c|c|c|c}
		$\beta$ & $0$ &$(0,\ln n)$ & $\ln n$& $(\ln n, \infty)$&gr\\
		\hline \hline
		$\mathcal{T}C^*(E)$& $\emptyset$ & $\{m^\beta_{v_k}\mid k\geq 2\}$& $\{m^\beta_{v_k}\mid k\geq 1\}$& $\{m^\beta_{v_k}\mid k\geq 1\}$&$\{m^{\operatorname{gr}}_{v_k}\mid k\ge 1\}$\\
		\hline
		$C^*(E)$& $\emptyset$ & $\emptyset $& $\{m^\beta_{v_1}\}$& $\emptyset$ & $\{m^{\operatorname{gr}}_{v_1}\}$\\
		\hline
		$\mathcal{TO}_n$ & $\emptyset$ & $\emptyset$ & $\{m^\beta_v\}$ & $\{m^\beta_v\}$&$\{m^{\operatorname{gr}}_v\}$\\
		\hline
		$\mathcal{O}_n$ & $\emptyset$ & $\emptyset$ & $\{m^\beta_v\}$ & $\emptyset$&$\emptyset$
		\end{tabular}
	\end{center}
\end{table}

Note that the graph consisting of a countable straight line underlies the algebra $\mathcal{K}$ of compact operators on a separable
Hilbert space. In particular, $\mathcal{T}C^*(E)\cong \mathcal{TO}_n \otimes \mathcal{K}$.
\end{example}

\begin{example} \label{ex:d}
	Finally we present an example of a ground state which is not a KMS$_\infty$ state.
	
	Let $E^0=\{v\}$ and $E^1=\{e_1,e_2,\dots\}$. Then $E^0_\reg=\emptyset$ and $\mathcal{T}C^*(E)=C^*(E)=\mathcal{O}_\infty$. Let $N:E^1\to (1,\infty)$ be given by $N(e)=\exp(1)$ for $e\in E^1$. Clearly $\Eequ=\emptyset$ for all $\beta$ and $D^{\operatorname{gr}}=\{m_v^{\operatorname{gr}}\}$. It follows that there are no KMS states and that the ground state corresponding to $m_v^{\operatorname{gr}}$ is not a KMS$_\infty$ state.
\end{example}

\end{document}